\newtheorem{lemma}{Lemma}[section]
\newtheorem{theorem}[lemma]{Theorem}
\newtheorem{cor}[lemma]{Corollary}
\newtheorem{prop}[lemma]{Proposition}
\newtheorem{defn}[lemma]{Definition}
\newtheorem{notn}[lemma]{Notation}
\def\E{{\rm E}}
\def\S{{\rm S}}
\def\H{{\rm H}}
\def\ZZ{\mbox{{\rm Z\hskip-4pt \rm Z}}}  
\title[Structure theorem for the Elementary Symplectic Group]{Quillen-Suslin theory 
for a structure theorem for the Elementary Symplectic Group}
\author{Neeraj~Kumar} 
\address{Indian Institute of Technology Bombay, Powai 400 076, India}
\email{neeraj@math.iitb.ac.in}
\curraddr{Dipartimento di Matematica, Universit\'{a} di Genova, Via Dodecaneso 35, 16146 Genova, Italy}
\email{kumar@dima.unige.it}
\author{Ravi A.~Rao} 
\address{Tata Institute of Fundamental Research, Mumbai 400 005, India}
\email{ravi@math.tifr.res.in}
\thanks{{\it Key words: Elementary symplectic group, Quillen's Local 
Global Principle} 
\endgraf
{\it 2000 Mathematics Subject Classification:} 13C10,
      13H05, 19B14, 19B99, 20H25, 55R50}
\begin{document}
\hfuzz25pt

\begin{abstract}
A new set of elementary 
symplectic elements is described. It is shown that these also generate the elementary symplectic 
group {\rm ESp}$_{2n}(R)$. These generators are more symmetrical than the 
usual ones, and are useful to study the action of the elementary symplectic 
group on unimodular rows. Also, an alternate proof of, {\rm ESp}$_{2n}(R)$ 
is a normal subgroup of {\rm Sp}$_{2n}(R)$, is shown using the Local 
Global Principle of D. Quillen for the new set of generators.
\end{abstract}

\maketitle

\section{\large Introduction}

Let $R$ be a commutative ring with $1$. 
The symplectic group Sp$_{2n}(R)$ is the isotropy group of $\psi_n $ under 
the action of SL$_{2n}(R)$ on $\psi_n$ by conjugation, i.e. Sp$_{2n}(R) =
\{\alpha \in $ SL$_{2n}(R)\mid \alpha^t \psi_n \alpha = \psi_n\}$.
(Here $\psi_n \in$~SL$_{2n}(\ZZ)$ denotes the usual alternating
matrix (of Pfaffian one) got by placing the standard $2\times 2$
alternating matrix  $\psi_1$ of Pfaffian one, diagonally $n$
times.

The Elementary Group $\text{E}_n(R)$ is the subgroup of 
$\text{GL}_n(R)$ generated by $\{ \E_{ij}(\lambda) : \lambda \in R \}$, 
for $i \neq j $, where $\E_{ij}(\lambda)= I_n + \lambda e_{ij}$
and $e_{ij}$ is the matrix with $1$ on the $(i, j)$-{th} position 
and $0$ elsewhere. $I_{n}$ denotes the identity matrix.

The Elementary Symplectic group ESp$_{2n}(R)$ is the subgroup of
Sp$_{2n}(R)$ generated by the following ``symplectic elementary"
matrices $\E_{2 1}(x), \E_{12}(x)$, for $x \in R$, $\S_{ij}(\lambda)$, $1
\leq i \neq j \neq \pi (i) \leq 2n$, $\lambda \in R$, where $\pi$ is the
permutation $(1~2)(3~4) \cdots (2n - 1~2n) \in$~ S$_{2n}$:
\begin{eqnarray*}
\S_{ij}(\lambda) &=& I_{2n} + \lambda e_{ij} -(-1)^{i + j}\lambda
e_{\pi(j)\pi(i)}.
\end{eqnarray*}

In this note we describe a more ``symmetric'' set of generators
denoted by $\E(A)$, $\E(B)$, $\E(C)$, $\E(D)$, which generate  ESp$_{2n}(R)$.

These generators are useful in analysing the action of the 
elementary group on a unimodular row. A recent result in \cite{pr}, 
which generalizes a famous lemma of L.N. Vaser{\v{s}}te{\u\i}n in (\cite{SV}, Lemma 5.6), 
states 
that the elementary orbit of a unimodular row coincides with its elementary 
symplectic orbit. L.N. Vaser{\v{s}}te{\u\i}n used this result in \cite{SV} to prove that 
the orbit space of unimodular rows of length three modulo elementary action 
is isomorphic to the elementary symplectic Witt group W$_{\E}(R)$ over a two 
dimensional ring $R$. Such an isomorphism was also shown for a non-singular 
affine algebra of dimension three over an algebraically closed field $k$, when 
characteristic $k \neq 2, 3$ by Rao-van der Kallen in \cite{vdkr}.

The theorem in this note enables one to study elementary 
symplectic action, in a more symmetric way. The objective of the 
second named author was to consider if the Vaser{\v{s}}te{\u\i}n symbol from the orbit space
Um$_3(A)/$E$_3(A) \to $W$_\E(A)$ is injective, when 
$A$ is a three dimensional affine algebra over an algebraically closed field 
(even if $A$ is singular). This leads to the study of the action of a 
$1$-stably elementary matrix on an alternating matrix; 
which is discussed in \cite{frs}, It is the hope of the second author that the 
structure theorem developed here will be useful to analyse this. 

\paragraph{{\bf Convention:}} In this article, we assume that $R$ is a commutative ring
with $1$ and $2$ is an invertible element in $R$.

\medskip

\section{\large The initial structure theorem}

To describe the initial structure theorem we isolate the following
four types of  basic elementary symplectic generators $\E(A)$,
$\E(B)$, $\E(C)$, $\E(D)$ respectively which we will use in the
sequel. We begin with a notation.

\begin{notn}
{\rm We denote by
\begin{eqnarray*}
\E\begin{pmatrix} \lambda a & \lambda b\\ \mu a & \mu
b\end{pmatrix}:&=& \begin{pmatrix} I_{2} & X \\ \psi_{n - 1} X^t
\psi_{1} & I_{2n - 2}\end{pmatrix},
\end{eqnarray*}
for some $X \in$~ M$_{2, 2n-2}(R)$ of the form $X = (X_3, X_5,
\cdots X_{2n-1})$, where $$X_{2i - 1} =
\begin{pmatrix}\lambda a_i &\lambda b_i\\\mu a_i & \mu
b_i\end{pmatrix},$$ for some $\lambda, \mu, a_i, b_i \in R $, for
$2 \leq i \leq n$. This is a symplectic matrix since $\det(X_i) =
0$, for all $i$.

 We can also write this as $ \E((X_3)_2,(X_5)_3 \cdots (X_{2n-1})_n)$
 where the lower indices indicate that the block $ X_{2i-1} $ lies
 in the $(1,i)$-th position of the block matrix $\E\begin{pmatrix}\lambda a
& \lambda b\\ \mu a & \mu b\end{pmatrix} \in $ Sp$_n(M_2(R))$.

  We will denote by $\E^k\begin{pmatrix}\lambda a
& \lambda b\\ \mu a & \mu b\end{pmatrix}$, a matrix of the above
type which has {\it \bf precisely} the $X_{2k - 1}$ block that is 
non-zero. (The $k$ will usually be clear from the context). Note also 
that this is the same as $\E(X_k)$.}
\end{notn}

For $\alpha \in {\rm SL}_r(R)$, $\beta \in {\rm SL}_s(R)$, $\alpha \perp \beta$
is the matrix $\begin{pmatrix}\alpha & 0\\0 & \beta\end{pmatrix} \in 
{\rm SL}_{r+s}(R)$.

Examples are stated below in the form of a lemma:

\begin{lemma}  \label{1}
For $n > 1$, we have 
\begin{eqnarray*}
\E_{2 1} (\lambda) {\rm S}_{1 2n-1}(x) {\rm S}_{1 2n}(y)\E_{2 1}(-\lambda)
& = & (\delta \perp I_{2n - 2})\E^{n}\begin{pmatrix} x & y\\ \lambda x &
\lambda y \end{pmatrix},
\end{eqnarray*}
for $\delta = \E_{21}(\lambda)\E_{12}(xy)\E_{21}(-\lambda)\in 
\E_2(R)$. \hfill $\square$
\end{lemma}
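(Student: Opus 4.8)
The plan is to verify the identity by direct matrix multiplication, exploiting the block structure $\begin{pmatrix} I_2 & X\\ \psi_{n-1}X^t\psi_1 & I_{2n-2}\end{pmatrix}$ that appears on the right-hand side. First I would write each of the four factors on the left in explicit $2n\times 2n$ block form relative to the decomposition $R^{2n}=R^2\oplus R^{2n-2}$, where the $R^2$ is spanned by $e_1,e_2$ and the complement by $e_3,\dots,e_{2n}$. The element $\E_{21}(\lambda)$ is the block matrix $\begin{pmatrix}\delta_0 & 0\\ 0 & I_{2n-2}\end{pmatrix}$ with $\delta_0=\E_{21}(\lambda)\in\E_2(R)$, while $\S_{1\,2n-1}(x)$ and $\S_{1\,2n}(y)$, by the defining formula $\S_{ij}(\lambda)=I_{2n}+\lambda e_{ij}-(-1)^{i+j}\lambda e_{\pi(j)\pi(i)}$ with $\pi=(1\,2)(3\,4)\cdots(2n-1\,2n)$, are each of the form $\begin{pmatrix}I_2 & Y\\ \psi_{n-1}Y^t\psi_1 & I_{2n-2}\end{pmatrix}$ with $Y$ supported in the last $2\times2$ block only; specifically $\S_{1\,2n-1}(x)$ contributes $x e_{1,2n-1}$ together with its symplectic partner, and $\S_{1\,2n}(y)$ contributes $y e_{1,2n}$ with its partner. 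Their product $\S_{1\,2n-1}(x)\S_{1\,2n}(y)$ is thus $\E^n\begin{pmatrix}x & y\\ 0 & 0\end{pmatrix}$ up to checking that the cross terms from the lower-left blocks vanish, which they do because the only nonzero entries of the upper-right block sit in row $1$ while the nonzero entries of the lower-left block sit in column $1$... wait, I should track this carefully: the product of two upper-unitriangular block symplectic matrices $\begin{pmatrix}I_2 & Y_1\\ Z_1 & I\end{pmatrix}\begin{pmatrix}I_2 & Y_2\\ Z_2 & I\end{pmatrix}$ has upper-left block $I_2+Y_1Z_2$, and here $Y_1Z_2=0$ since $Y_1$ has entries only in row $1$ while $Z_2$ (being $\psi_{n-1}Y_2^t\psi_1$) has its nonzero column equal to column $2n-2$... in any case the relevant products collapse because $x$ and $y$ both come from row $1$ entries $e_{1,\bullet}$ and the transposes land in column $2$, so $Y_1 Z_2$ and $Z_1 Y_2$ involve incompatible supports.

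Next I would conjugate: compute $\E_{21}(\lambda)\,M\,\E_{21}(-\lambda)$ where $M=\S_{1\,2n-1}(x)\S_{1\,2n}(y)=\begin{pmatrix}I_2 & X_0\\ \psi_{n-1}X_0^t\psi_1 & I_{2n-2}\end{pmatrix}$ with $X_0$ having only its $k=n$ block $\begin{pmatrix}x&y\\0&0\end{pmatrix}$ nonzero. Conjugating by $\delta_0\perp I_{2n-2}$ sends this to $\begin{pmatrix}\delta_0 & 0\\ 0 & I\end{pmatrix}\begin{pmatrix}I_2 & X_0\\ \psi X_0^t\psi & I\end{pmatrix}\begin{pmatrix}\delta_0^{-1} & 0\\ 0 & I\end{pmatrix}=\begin{pmatrix}\delta_0(I_2)\delta_0^{-1} & \delta_0 X_0\\ \psi X_0^t\psi\,\delta_0^{-1} & I\end{pmatrix}$. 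For this to be symplectic of the required form we need the new upper-right block $\delta_0 X_0$ to have the shape of an $X$-block and the new lower-left block to be $\psi_{n-1}(\delta_0 X_0)^t\psi_1$; the first holds because $\delta_0\begin{pmatrix}x&y\\0&0\end{pmatrix}$ has the column-proportional form $\begin{pmatrix}x&y\\\lambda x&\lambda y\end{pmatrix}$ (a rank-$\le1$ $2\times2$ block), and the second is exactly the symplectic compatibility that fails — here is where the correction factor $\delta=\E_{21}(\lambda)\E_{12}(xy)\E_{21}(-\lambda)$ must enter. Concretely $\psi_1\delta_0^{-1}=(\delta_0^t\psi_1)^{-1}$ is not $\delta_0^t\psi_1$'s well-behaved form unless $\delta_0$ itself is symplectic, and $\E_{21}(\lambda)\in\Sp_2(R)$ actually \emph{is} symplectic, so $\psi_1\delta_0^{-1}\psi_1^{-1}=(\delta_0^t)^{-1}$ and the lower-left block becomes $\psi_{n-1}X_0^t(\delta_0^t)^{-1}\psi_1$. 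Comparing $(\delta_0 X_0)^t=X_0^t\delta_0^t$ with $X_0^t(\delta_0^t)^{-1}$, the discrepancy is multiplication by $\delta_0^t\cdot\delta_0^t$ on a rank-one block, and a short computation shows this discrepancy is absorbed precisely by pulling out $\delta=\E_{21}(\lambda)\E_{12}(xy)\E_{21}(-\lambda)$ on the left, using $2\in R^\times$ only implicitly through the structure of $\Sp_2$.

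The main obstacle, therefore, is the bookkeeping in the third paragraph: getting the lower-left block of the conjugate to match $\psi_{n-1}(\text{new }X)^t\psi_1$ forces the appearance of the extra factor $\delta$, and one has to identify $\delta$ exactly (rather than just "some element of $\E_2(R)$"). I would handle this by computing the full conjugate $\E_{21}(\lambda)\,M\,\E_{21}(-\lambda)$ entrywise in the $2\times2$-block world $\Sp_n(M_2(R))$, where $M=\E^n(X_0)$ and conjugation by $\E_{21}(\lambda)\perp I$ acts only on the $(1,1)$, $(1,n)$, $(n,1)$ blocks; then I isolate the leftover $(1,1)$-block, check it equals $\delta$ as claimed (this is a $2\times2$ identity: $\E_{21}(\lambda)\begin{pmatrix}x&y\\\lambda x&\lambda y\end{pmatrix}$ produces an $\E_{12}(xy)$ correction after normalizing), and verify the $(1,n)$ and $(n,1)$ blocks have the stated symplectic form. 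Everything else is routine, and $n>1$ is needed only so that the index $2n-1$ is distinct from $2$ and a genuine "$n$-th block" exists.
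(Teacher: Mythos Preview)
Your overall strategy --- block decompose with respect to $R^{2n}=R^2\oplus R^{2n-2}$ and compute --- is exactly what the paper has in mind (it states the lemma with no proof, just a $\square$; it is meant as a direct verification). But your execution contains a concrete error that sends the rest of the argument off track.

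The mistake is the claim that $Y_1Z_2=0$ in the product $\S_{1,2n-1}(x)\S_{1,2n}(y)$. Take $n=2$ to see it: $\S_{13}(x)=I_4+xe_{13}-xe_{42}$ and $\S_{14}(y)=I_4+ye_{14}+ye_{32}$, so the cross term $xe_{13}\cdot ye_{32}=xy\,e_{12}$ survives. In general the upper-right block $Y_1$ of $\S_{1,2n-1}(x)$ has its nonzero entry in column $2n-3$, while the lower-left block $Z_2$ of $\S_{1,2n}(y)$ has its nonzero entry in \emph{row} $2n-3$ (column $2$, not column $2n-2$ as you wrote); these match and give $Y_1Z_2=xy\,e_{12}\neq 0$. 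Hence
\[
\S_{1,2n-1}(x)\S_{1,2n}(y)=(\E_{12}(xy)\perp I_{2n-2})\,\E^{n}\begin{pmatrix}x&y\\0&0\end{pmatrix},
\]
and \emph{this} is where the factor $\E_{12}(xy)$ inside $\delta$ originates.

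Once you have this, the conjugation step is clean and requires no ``discrepancy'' correction at all: since $\E_{21}(\lambda)\in\Sp_2(R)$, one has $\psi_1\E_{21}(-\lambda)=\E_{21}(\lambda)^t\psi_1$, so $(\E_{21}(\lambda)\perp I_{2n-2})\,\E^n(X_0)\,(\E_{21}(-\lambda)\perp I_{2n-2})=\E^n(\E_{21}(\lambda)X_0)=\E^n\begin{pmatrix}x&y\\\lambda x&\lambda y\end{pmatrix}$ exactly (this is Lemma~\ref{7} of the paper). Your third paragraph, searching for $\delta$ via a mismatch between $X_0^t\delta_0^t$ and $X_0^t(\delta_0^t)^{-1}$, is therefore chasing a phantom; the lower-left block matches on the nose. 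Combining the two displayed facts gives
\[
\E_{21}(\lambda)\S_{1,2n-1}(x)\S_{1,2n}(y)\E_{21}(-\lambda)
=(\E_{21}(\lambda)\E_{12}(xy)\E_{21}(-\lambda)\perp I_{2n-2})\,\E^{n}\begin{pmatrix}x&y\\\lambda x&\lambda y\end{pmatrix},
\]
which is the assertion.
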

\begin{lemma}\label{2}
A symplectic matrix \E$\begin{pmatrix}\lambda x & \lambda y\\
\mu x & \mu y\end{pmatrix}$ is elementary symplectic if \\ $(\lambda, \mu)
\sim_{{\rm E}_{2}(R)} (1, 0)$.
\end{lemma}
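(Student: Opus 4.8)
The plan is to conjugate by an element of ${\rm ESp}_{2n}(R)$ so as to reduce to the case $(\lambda,\mu)=(1,0)$, and then to split the resulting matrix into one-block pieces, each of which is identified through Lemma \ref{1}.

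First note that $\varepsilon\perp I_{2n-2}\in{\rm ESp}_{2n}(R)$ for every $\varepsilon\in{\rm E}_2(R)$, since $\varepsilon$ is a word in the matrices $\E_{12}(t),\E_{21}(t)$ and $\E_{12}(t)\perp I_{2n-2}=\E_{12}(t)$, $\E_{21}(t)\perp I_{2n-2}=\E_{21}(t)$ are among the symplectic generators. Write $\E\begin{pmatrix}\lambda x & \lambda y\\ \mu x & \mu y\end{pmatrix}=\begin{pmatrix}I_2 & X\\ \psi_{n-1}X^t\psi_1 & I_{2n-2}\end{pmatrix}$ with blocks $X_{2i-1}=(\lambda,\mu)^{t}(x_i\ y_i)$. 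A block multiplication gives
\begin{eqnarray*}
(\varepsilon\perp I_{2n-2})\,\E\begin{pmatrix}\lambda x & \lambda y\\ \mu x & \mu y\end{pmatrix}\,(\varepsilon\perp I_{2n-2})^{-1} &=& \begin{pmatrix}I_2 & \varepsilon X\\ \psi_{n-1}X^t\psi_1\varepsilon^{-1} & I_{2n-2}\end{pmatrix},
\end{eqnarray*}
and since $\varepsilon\in{\rm E}_2(R)\subseteq{\rm SL}_2(R)={\rm Sp}_2(R)$ satisfies $\varepsilon^t\psi_1\varepsilon=\psi_1$, equivalently $\psi_1\varepsilon^{-1}=\varepsilon^t\psi_1$, the lower-left block equals $\psi_{n-1}(\varepsilon X)^t\psi_1$; so the conjugate is again of the given type, with blocks $\varepsilon X_{2i-1}=(\varepsilon(\lambda,\mu)^{t})(x_i\ y_i)$. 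If $(\lambda,\mu)\sim_{{\rm E}_{2}(R)}(1,0)$ then, taking transposes, there is $\varepsilon\in{\rm E}_2(R)$ with $\varepsilon(\lambda,\mu)^{t}=(1,0)^{t}$, and the conjugate above is precisely $\E\begin{pmatrix}x & y\\ 0 & 0\end{pmatrix}$. Hence it suffices to show $\E\begin{pmatrix}x & y\\ 0 & 0\end{pmatrix}\in{\rm ESp}_{2n}(R)$.

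A direct check shows that the one-block matrices $\E^{k}\begin{pmatrix}x_k & y_k\\ 0 & 0\end{pmatrix}$, $2\le k\le n$, commute (the off-diagonal supports of $\E^{j}-I_{2n}$ and $\E^{k}-I_{2n}$ are disjoint, with vanishing pairwise products for $j\neq k$) and multiply to $\E\begin{pmatrix}x & y\\ 0 & 0\end{pmatrix}$. It then remains to see that each one-block matrix is elementary symplectic, and this is the content of Lemma \ref{1} with $\lambda=0$, applied with $2n-1,2n$ replaced by $2k-1,2k$ (these ${\rm S}$'s are legitimate generators for $2\le k\le n$, and the computation behind Lemma \ref{1} is local to the $k$-th block, with $\delta=\E_{12}(x_ky_k)$): explicitly
\begin{eqnarray*}
\E^{k}\begin{pmatrix}x_k & y_k\\ 0 & 0\end{pmatrix} &=& \E_{12}(-x_ky_k)\,{\rm S}_{1\,2k-1}(x_k)\,{\rm S}_{1\,2k}(y_k)\;\in\;{\rm ESp}_{2n}(R).
\end{eqnarray*}
Multiplying over $k$ and conjugating back by $(\varepsilon\perp I_{2n-2})^{-1}$ shows $\E\begin{pmatrix}\lambda x & \lambda y\\ \mu x & \mu y\end{pmatrix}\in{\rm ESp}_{2n}(R)$.

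The step needing real care is the reduction in the second paragraph: one must verify that conjugation by $\varepsilon\perp I_{2n-2}$ keeps the matrix in ``$\E$-form'' (this is exactly the symplecticity $\varepsilon^t\psi_1\varepsilon=\psi_1$) and acts on the blocks by $X_{2i-1}\mapsto\varepsilon X_{2i-1}$. One also needs the observation that the identity of Lemma \ref{1} is local to a single block, so that it is available for each index $k$ and not merely for $k=n$. The remaining arguments are routine bookkeeping with the $2\times2$ block structure.
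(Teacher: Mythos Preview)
Your proof is correct and follows essentially the same approach as the paper's: both conjugate by $\varepsilon\perp I_{2n-2}$ to reduce to $(\lambda,\mu)=(1,0)$ and then invoke the explicit identity $\E^{k}\begin{pmatrix}x_k & y_k\\0 & 0\end{pmatrix}=\E_{12}(-x_ky_k)\,\S_{1,2k-1}(x_k)\,\S_{1,2k}(y_k)$. The paper compresses this into the single line
\[
\E\begin{pmatrix}\lambda x & \lambda y\\ \mu x & \mu y\end{pmatrix}
=(\varepsilon\perp I_{2n-2})\,\E_{12}(-xy)\,\S_{1,2k-1}(x)\,\S_{1,2k}(y)\,(\varepsilon\perp I_{2n-2})^{-1},
\]
stated for a single nonzero block $X_{2k-1}$, whereas you spell out the conjugation identity $\psi_1\varepsilon^{-1}=\varepsilon^{t}\psi_1$, the product decomposition over blocks, and the commutation of the one-block factors; these are exactly the bookkeeping details the paper leaves implicit.
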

Proof: Let $X_{2k - 1} \neq 0$ in the given matrix, and let $(\lambda,
\mu) \varepsilon^{{-1}^t} = (1, 0)$. Then
\begin{eqnarray*}
{\rm E}\begin{pmatrix}\lambda x & \lambda y\\ \mu x & \mu y\end{pmatrix} = (\varepsilon \perp
I_{2n - 2}) \E_{12}(-xy)\S_{1 2k - 1}(x) \S_{1 2k}(y) (\varepsilon \perp I_{2n -
2})^{-1}.
\end{eqnarray*}
\hfill $\square$

The four basic elementary symplectic matrices are
$$\E\begin{pmatrix} a & a\\ a & a\end{pmatrix},\; \E \begin{pmatrix} b &
-b \\ b & -b\end{pmatrix},\; \E \begin{pmatrix} c & c \\ -c & -c\end{pmatrix},\;
\E\begin{pmatrix}-d &
d \\ d & -d\end{pmatrix}.
$$

We will denote these by type $\E(A(a))$, $\E(B(b))$, $\E(C(c))$, $\E(D(d))$
respectively; or just by  $\E(A)$, $\E(B)$, $\E(C)$, $\E(D)$ when we are
not too concerned about the actual entries, but only interested in the
shape and form under discussion.\\\\
{\bf Definition of H.} Let $R$ be a commutative ring. We define $H (=H(R))$ to be the 
subgroup of {\rm ESp}$_{2n}(R)$ generated by elements of the type 
$\E(A)$, $\E(B)$, $\E(C)$, $\E(D)$. 

We shall also refer the $2 \times 2$ matrices
$$\begin{pmatrix} a & a \\ a & a\end{pmatrix}, \begin{pmatrix} b & -b \\
b & -b \end{pmatrix},
\begin{pmatrix} c & c \\ -c & -c \end{pmatrix}, \begin{pmatrix} -d & d \\
d & -d \end{pmatrix},$$ to be of type $A(a)$, $B(b)$, $C(c)$,
$D(d)$ respectively; or just $A$, $B$, $C$, $D$ to indicate the
shape. For future reference we may use the relations:

\begin{lemma} \label{abcd} Let $x, y \in R$. Then
\begin{eqnarray*}
A(x)A(y)=&A(2xy), \;  &A(x)B(y) = B(2xy).\nonumber\\
A(x)C(y)=&0 ,          &A(x)D(y) = 0. \nonumber\\
B(x)A(y)=&0 ,          &B(x)B(y) =0.\nonumber\\
B(x)C(y)=&A(2xy) ,     &B(x)D(y) = B(-2xy).\nonumber\\
C(x)A(y)=&C(2xy) ,     &C(x)B(y) = D(-2xy).\nonumber\\
C(x)C(y)=&0 ,          &C(x)D(y) = 0.\nonumber\\
D(x)A(y)=&0 ,          &D(x)B(y) = 0.\nonumber\\
D(x)C(y)=&C(-2xy),     &D(x)D(y) = D(-2xy).\nonumber
\end{eqnarray*} \hfill$\square$
\end{lemma}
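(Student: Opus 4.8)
The statement is a finite table of products of $2\times2$ matrices, and in principle one simply multiplies out all sixteen pairs. The plan is instead to organise the bookkeeping so that the whole table falls out of a single observation: each of the four shapes is a rank-one matrix, an outer product of a fixed column vector with a fixed row vector. Setting $u=\begin{pmatrix}1\\1\end{pmatrix}$, $v=\begin{pmatrix}1\\-1\end{pmatrix}$ and $p=\begin{pmatrix}1&1\end{pmatrix}$, $q=\begin{pmatrix}1&-1\end{pmatrix}$, one checks at once that
\[
A(x)=x\,up,\qquad B(x)=x\,uq,\qquad C(x)=x\,vp,\qquad D(x)=-x\,vq.
\]

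Next I would record the four scalars obtained by contracting a row vector with a column vector:
\[
pu=2,\qquad qv=2,\qquad pv=0,\qquad qu=0.
\]
Every product of two basic matrices then becomes transparent, since associativity lets the inner row-times-column pairing be evaluated first. For instance $A(x)C(y)=(x\,up)(y\,vp)=xy\,u(pv)p=0$ because $pv=0$; $B(x)C(y)=(x\,uq)(y\,vp)=xy\,u(qv)p=2xy\,up=A(2xy)$; and $C(x)B(y)=(x\,vp)(y\,uq)=xy\,v(pu)q=2xy\,vq=D(-2xy)$, using $D(\lambda)=-\lambda\,vq$.

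Running through all four choices for the left factor and all four for the right factor, the product $X(x)Y(y)$ vanishes precisely when the inner pairing is zero, and otherwise equals $2xy$ times the outer product of the left-hand column vector of $X$ with the right-hand row vector of $Y$, which is again one of the four shapes; the sign is forced by the lone $-1$ in the factorization of $D$. Reading off the values of the pairings then gives exactly the sixteen identities in the statement. I do not anticipate any genuine obstacle here: the only thing to watch is that single sign attached to the $D$-rows and $D$-columns. (Incidentally, the identities use nothing but the ring structure — the factor $2$ is never inverted — so the standing hypothesis $2\in R^{\times}$ plays no role in this lemma.)
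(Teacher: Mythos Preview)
Your proposal is correct; the rank-one factorisations $A(x)=x\,up$, $B(x)=x\,uq$, $C(x)=x\,vp$, $D(x)=-x\,vq$ together with the four pairings $pu=qv=2$, $pv=qu=0$ do yield all sixteen identities at once, including the sign in the $D$-entries. The paper gives no proof of this lemma beyond the terminal $\square$, treating it as a routine direct verification, so your argument is in fact more organised and explicit than what appears there.
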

We will find it convenient to denote type $\E(A)$ as $\begin{pmatrix} A(a) \\
D(a) \end{pmatrix}$, type $\E(B)$ as $\begin{pmatrix}B(b) \\ B(b)
\end{pmatrix}$, type $\E(C)$ as
$\begin{pmatrix} C(c) \\ C(c) \end{pmatrix}$, type $\E(D)$ as
$\begin{pmatrix} D(d) \\ A(d) \end{pmatrix}$ as it allows us to
keep in focus the shape of the ``horizontal" and ``vertical"
components of a basic elementary symplectic matrix. Or else think
of them as the ``top" and ``bottom" component as in the case when
$n = 2$.

One can go further to use this notation: the reader will
understand if we write for $X$, $Y \in$~ M$_2(R)$,
$\begin{pmatrix}X \\ 0 \end{pmatrix}$, $\begin{pmatrix} 0 \\ X
\end{pmatrix}$, and also $\begin{pmatrix} X \\ Y \end{pmatrix}$,
with some indication for the placement of the blocks $X$, $Y$. In
particular, one could have $\begin{pmatrix} B \\ B \end{pmatrix}$
with the transpose of the top $B$ and the bottom $B$ not being the
same matrix and only being matrices having the same  form. Note
that in the extended notation some of the matrices are obviously
not elementary symplectic of course. All this is just to say that
the special elementary symplectic matrices have the {\em
``splitting property"}:

\begin{lemma} \label{3} {\rm (Splitting property)}
For $X \in$~ {\rm M}$_2(R)$, with $\det(X) = 0$,
\begin{eqnarray*}
\E(X) = \begin{pmatrix}X \\ \psi_1 X^t \psi_1\end{pmatrix}
= \begin{pmatrix}X \\ 0\end{pmatrix}\begin{pmatrix} 0
\\ \psi_1 X^t \psi_1 \end{pmatrix} = \begin{pmatrix} 0 \\
\psi_1 X^t\psi_1\end{pmatrix}\begin{pmatrix} X \\ 0 \end{pmatrix}.
\end{eqnarray*} \hfill $\square$
\end{lemma}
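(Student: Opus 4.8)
The plan is to verify the two claimed factorizations by direct block-matrix multiplication, using the specific shape $\E(X) = \begin{pmatrix} I_2 & X \\ \psi_{n-1} X^t \psi_1 & I_{2n-2} \end{pmatrix}$ from the Notation (here specialized to the case where $X$ has a single nonzero $2\times 2$ block, so that effectively we work in $\mathrm{Sp}_4(R)$ and $\psi_{n-1}$ reduces to $\psi_1$). The symbols $\begin{pmatrix} X \\ 0 \end{pmatrix}$ and $\begin{pmatrix} 0 \\ \psi_1 X^t \psi_1 \end{pmatrix}$ are, by the extended notation introduced just above, the matrices $\begin{pmatrix} I_2 & X \\ 0 & I_{2n-2} \end{pmatrix}$ and $\begin{pmatrix} I_2 & 0 \\ \psi_1 X^t \psi_1 & I_{2n-2} \end{pmatrix}$ respectively. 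So the content of the lemma is the pair of identities
\begin{eqnarray*}
\begin{pmatrix} I_2 & X \\ \psi_1 X^t \psi_1 & I_{2n-2} \end{pmatrix}
&=& \begin{pmatrix} I_2 & X \\ 0 & I_{2n-2} \end{pmatrix}\begin{pmatrix} I_2 & 0 \\ \psi_1 X^t \psi_1 & I_{2n-2} \end{pmatrix}
= \begin{pmatrix} I_2 & 0 \\ \psi_1 X^t \psi_1 & I_{2n-2} \end{pmatrix}\begin{pmatrix} I_2 & X \\ 0 & I_{2n-2} \end{pmatrix}.
\end{eqnarray*}

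First I would multiply out the right-hand side of the first equality: the $(1,1)$ block is $I_2$, the $(1,2)$ block is $X$, the $(2,1)$ block is $\psi_1 X^t \psi_1$, and the $(2,2)$ block is $I_{2n-2} + (\psi_1 X^t \psi_1) X$. So the identity holds precisely when $(\psi_1 X^t \psi_1) X = 0$. Since $\psi_1$ is invertible, $\psi_1 X^t \psi_1 X = \psi_1 (X^t \psi_1 X)$, and $X^t \psi_1 X$ is the $2\times 2$ alternating matrix $\det(X)\,\psi_1$ (a standard identity for $2\times 2$ matrices: $X^t \psi_1 X = \mathrm{Pf}$-type scaling by $\det X$). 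Hence $(\psi_1 X^t \psi_1)X = \det(X)\,\psi_1^2$, which vanishes exactly because $\det(X) = 0$ by hypothesis. The second equality is checked the same way: the product in the other order has $(2,2)$ block $I_{2n-2}$ and $(1,1)$ block $I_2 + X(\psi_1 X^t \psi_1) = I_2 + (X\psi_1 X^t)\psi_1$, and again $X \psi_1 X^t = \det(X)\,\psi_1 = 0$. In both orderings the cross blocks $X$ and $\psi_1 X^t\psi_1$ come out unchanged, so both products equal $\E(X)$.

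The only real point to get right is the normalization of the alternating form and the identity $X^t \psi_1 X = \det(X)\psi_1$ (equivalently $X\psi_1 X^t = \det(X)\psi_1$) for $2\times 2$ matrices, together with the remark that this is exactly what makes $\E(X)$ symplectic in the first place — a fact already recorded in the Notation ("This is a symplectic matrix since $\det(X_i) = 0$"). So there is no genuine obstacle; the lemma is a one-line block computation once the hypothesis $\det(X)=0$ is invoked. The harmless ambiguity in the notation $\begin{pmatrix} X\\ Y\end{pmatrix}$ (that the two "$X$''-shaped blocks need only have the same form, not be literally equal) does not arise here, since in $\E(X)$ the lower block is forced to be $\psi_1 X^t \psi_1$. \hfill $\square$
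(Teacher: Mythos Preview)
Your argument is correct and is exactly the direct block computation the paper has in mind (the paper gives no proof beyond the $\square$). One small slip: you have the two products swapped --- in $\begin{pmatrix} X\\0\end{pmatrix}\begin{pmatrix}0\\ \psi_1 X^t\psi_1\end{pmatrix}$ the extra term $X(\psi_1 X^t\psi_1)$ lands in the $(1,1)$ block, while in the reverse order $(\psi_1 X^t\psi_1)X$ lands in the $(2,2)$ block --- but since both vanish by the same $\det(X)=0$ argument you give, this does not affect the validity.
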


We will henceforth use the notations, for $\delta \in {\rm SL}_2 (R)$,
$I_2 + X \in \E_2(R)$, $X = B$ or $C$; $\sigma \in$ SL$_{2n}(R)$ etc. 
\begin{eqnarray*}
^{\delta}\sigma &=& (\delta \perp I_{2n - 2})\sigma, \\
\sigma^{\delta}
&=&
\sigma ( \delta \perp I_{2n - 2}),\\
\sigma_{I_2 + X} &=& \sigma (I_{2k -2} \perp (I_2 + X) \perp I_{2n
- 2k}),
\end{eqnarray*}
for some $k$, which will be clear from the context.

\begin{lemma} \label{4}
For $\delta \in {\rm SL}_2 (R)$, $ y_i \in
R$, $3 \leq i \leq 2n$,
\begin{eqnarray*}
^{\delta} {\displaystyle ({\prod_{i=3}^{2n}}} \S_{1 i}(y_i))^{\delta^{-1}} & = &
^{\sigma}\prod_{i = 2}^n \E^{i}
\begin{pmatrix} \lambda y_{2i - 1} & \lambda y_{2i}\\
\mu y_{2i - 1} & \mu y_{2i}
\end{pmatrix},
\end{eqnarray*}
where $(\lambda, \mu) = e_1 \delta^{t}$, $\sigma = \delta
\E_{12}(y_3y_4 +y_5y_6+ \cdots + y_{2n-1}y_{2n})\delta^{-1}$.
 \hfill $\square$
\end{lemma}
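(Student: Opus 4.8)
The plan is to prove Lemma~\ref{4} by reducing everything to the single‑block identity of Lemma~\ref{1} together with the commutation behaviour of the ``symplectic elementary'' generators $\S_{1i}(\cdot)$. First I would recall that for $n>1$ and a fixed index $k$ with $2\le k\le n$,
\begin{eqnarray*}
\E_{21}(\lambda)\,\S_{1\,2k-1}(x)\,\S_{1\,2k}(y)\,\E_{21}(-\lambda)
&=& \bigl(\E_{21}(\lambda)\E_{12}(xy)\E_{21}(-\lambda)\perp I_{2n-2}\bigr)
\;\E^{k}\!\begin{pmatrix} x & y\\ \lambda x & \lambda y\end{pmatrix},
\end{eqnarray*}
which is exactly Lemma~\ref{1} with $2n-1,2n$ replaced by $2k-1,2k$ (the proof there does not use that $k=n$). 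I would then note that the elements $\S_{1i}(y_i)$ with $i$ ranging over $3,\dots,2n$ commute with each other: indeed $\S_{1i}(y_i)=I_{2n}+y_ie_{1i}-(-1)^{1+i}y_ie_{\pi(i)\,1}$, and for $i\ne j$ in the range $\{3,\dots,2n\}$ the matrices $e_{1i},e_{\pi(i)1},e_{1j},e_{\pi(j)1}$ multiply to zero in all the relevant cross terms because the column index $1$ never meets a row index $\ge 3$ and $\pi(i),\pi(j)\ge 3$ are distinct from $1$; so $\prod_{i=3}^{2n}\S_{1i}(y_i)$ is a well‑defined product independent of order, and conjugation by $\delta\perp I_{2n-2}$ distributes over it.

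Next I would carry out the conjugation. Writing $(\lambda,\mu)=e_1\delta^t$ and using the same computation that proves Lemma~\ref{2}/Lemma~\ref{1}, one gets that $^{\delta}\S_{1\,2k-1}(x)^{\delta^{-1}}$ together with $^{\delta}\S_{1\,2k}(y)^{\delta^{-1}}$ produces, after peeling off the $\E_2(R)$‑correction that lives in the $(1,\dots,2)$ block, the single block $\E^{k}\begin{pmatrix}\lambda x & \lambda y\\ \mu x & \mu y\end{pmatrix}$ (note $e_1\delta^t=(\lambda,\mu)$ matches the shape $\begin{pmatrix}\lambda\,\cdot&\lambda\,\cdot\\ \mu\,\cdot&\mu\,\cdot\end{pmatrix}$ of the generators in the Notation, so the right‑hand matrices really are of the admissible form). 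Applying this to the $k$‑th pair $(y_{2k-1},y_{2k})$ for each $k=2,\dots,n$ and multiplying, the $\E_2(R)$‑corrections collect on the left: since $\E_{12}$‑matrices multiply additively in their parameter, the accumulated correction is $\delta\,\E_{12}\!\bigl(\sum_{k=2}^{n}y_{2k-1}y_{2k}\bigr)\delta^{-1}=\delta\,\E_{12}(y_3y_4+y_5y_6+\cdots+y_{2n-1}y_{2n})\delta^{-1}=\sigma$, and the remaining factor is $\prod_{k=2}^{n}\E^{k}\begin{pmatrix}\lambda y_{2k-1}&\lambda y_{2k}\\ \mu y_{2k-1}&\mu y_{2k}\end{pmatrix}$, which is what the lemma asserts once one relabels the product index as $i$.

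The only genuine subtlety — and the step I would be most careful about — is the bookkeeping of \emph{where} the $\E_{12}$‑corrections sit and that they can be pulled out to a single left factor. Each application of Lemma~\ref{1} produces a correction of the form $\delta'\perp I_{2n-2}$ acting on the first $2$ coordinates, and the $\E^{k}$‑blocks act trivially on columns $1,2$ except through the $(1,k)$‑block; one must check that moving a later correction $\E_{12}(y_{2k-1}y_{2k})\perp I$ past an earlier single block $\E^{j}(\cdots)$ ($j<k$) costs nothing, i.e. that these commute. This holds because the block $\begin{pmatrix}I_2&X\\ \psi_{n-1}X^t\psi_1&I_{2n-2}\end{pmatrix}$ with only $X_{2j-1}\ne 0$ has its off‑diagonal entries confined to the $(1,j)$ and $(j,1)$ block positions, while $\E_{12}(\cdot)\perp I$ only alters the $(1,1)$ block and acts as identity on the $j$‑th block row/column; a direct block multiplication then shows the two commute (one may also invoke Lemma~\ref{2} and the fact that both lie in \text{ESp}, but the block computation is cleanest). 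Granting that, the rest is the routine telescoping described above, and the equality of the two sides follows. \hfill $\square$
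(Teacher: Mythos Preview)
Your overall strategy is sound, and the paper itself offers no proof (the lemma is stated with a bare $\square$), so you are in effect supplying one. However two of your justifications are incorrect as written.

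First, the claim that the $\S_{1i}(y_i)$, $3\le i\le 2n$, all commute is false. Since $\pi(1)=2$, one has $\S_{1i}(y)=I+y\,e_{1i}-(-1)^{1+i}y\,e_{\pi(i)\,2}$, and $e_{1,2k-1}\,e_{2k-1,2}=e_{12}\ne 0$. A direct check gives $\S_{1,2k-1}(x)\S_{1,2k}(y)=\E_{12}(xy)\,\E^{k}\!\begin{pmatrix}x&y\\0&0\end{pmatrix}$ while the reversed order produces $\E_{12}(-xy)$ instead; so the two do not commute. What \emph{is} true, and what you actually need, is that generators belonging to distinct $k$-blocks commute. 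The product in the statement is ordered, so this slip does not by itself break the argument.

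The more serious issue is your final ``subtlety''. You assert that $\E_{12}(c)\perp I_{2n-2}$ commutes with a block $\E^{j}\!\begin{pmatrix}\lambda a&\lambda b\\\mu a&\mu b\end{pmatrix}$ by a block-structure argument. This is false: left multiplication by $\gamma\perp I$ replaces the $X$-block by $\gamma X$, and $\E_{12}(c)\begin{pmatrix}\lambda a&\lambda b\\\mu a&\mu b\end{pmatrix}=\begin{pmatrix}(\lambda+c\mu)a&(\lambda+c\mu)b\\\mu a&\mu b\end{pmatrix}$, which is not $X$ once $\mu\ne 0$. What \emph{does} commute with $\E^{j}\!\begin{pmatrix}\lambda\cdot&\lambda\cdot\\\mu\cdot&\mu\cdot\end{pmatrix}$ is the conjugated correction $\sigma_k=\delta\,\E_{12}(y_{2k-1}y_{2k})\,\delta^{-1}$, and the reason is specific to the relationship between $\sigma_k$ and $(\lambda,\mu)=e_1\delta^{t}$: by Lemma~\ref{7}, conjugation by $\sigma_k\perp I$ replaces $(\lambda,\mu)$ by $(\lambda,\mu)\sigma_k^{t}=e_1\delta^{t}(\delta\E_{12}(c)\delta^{-1})^{t}=e_1\E_{21}(c)\delta^{t}=e_1\delta^{t}=(\lambda,\mu)$. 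So your conclusion is correct, but your stated reason is not; the commutation is \emph{not} a generic block phenomenon.

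A cleaner route that avoids this delicate point entirely: first prove the identity for $\delta=I_2$, where $(\lambda,\mu)=(1,0)$ and all the required commutations (of $\E_{12}(c)$ with $\E^{j}\!\begin{pmatrix}x&y\\0&0\end{pmatrix}$) genuinely hold by the block computation you sketched; this gives $\prod_{i=3}^{2n}\S_{1i}(y_i)=\E_{12}(\sum_{k}y_{2k-1}y_{2k})\,\prod_{k}\E^{k}\!\begin{pmatrix}y_{2k-1}&y_{2k}\\0&0\end{pmatrix}$. Then conjugate both sides by $\delta\perp I_{2n-2}$ and apply Lemma~\ref{7} to each $\E^{k}$-factor; the $\E_{12}$ term becomes $\sigma$ and each block acquires the correct $(\lambda,\mu)$.
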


\begin{lemma} \label{5} Let $R$ be a commutative ring. Then one
has the identities:
$$
\E\begin{pmatrix} \lambda x & \lambda y \\ \mu x & \mu y \end{pmatrix} =
^{Ch} \E \begin{pmatrix} \lambda a & \lambda a \\ \mu a & \mu a\end{pmatrix}
\E \begin{pmatrix} \lambda b & -\lambda b \\ \mu b & - \mu b \end{pmatrix},
\eqno{(1)}
$$
with $a  =  \frac{ x +  y}{2}$, $b = \frac{x - y}{2}$, and where $Ch$ is the
matrix
\begin{eqnarray*}
Ch &=& \begin{pmatrix} 1 - 2\lambda \mu ab &  2\lambda^2 ab\\ -2\mu^2 ab & 1 +
2\lambda \mu ab\end{pmatrix} \in {\rm SL}_2 (R) \cap~{\E}_3 (R).\\
& = & \varepsilon (I_2 + 2ab e_1^t e_2)\varepsilon^{-1} \in {\rm SL}_2(R),
\end{eqnarray*}
if $(\lambda, \mu) = e_1 \varepsilon^t$, for some $\varepsilon \in
{\rm SL}_2(R)$. $($Note that $Ch \in {\rm E}_2(R)$ if $\varepsilon \in
{\E}_2(R)$.$)$
\end{lemma}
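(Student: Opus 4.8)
The plan is to verify identity $(1)$ by direct computation, reducing everything to the $2\times 2$ block level and exploiting the splitting property (Lemma~\ref{3}) together with the multiplication table in Lemma~\ref{abcd}. First I would observe that, writing $a=(x+y)/2$ and $b=(x-y)/2$, we have $x=a+b$ and $y=a-b$, so that the top-row block of $\E\begin{pmatrix}\lambda x&\lambda y\\\mu x&\mu y\end{pmatrix}$ decomposes as $\begin{pmatrix}\lambda x&\lambda y\\\mu x&\mu y\end{pmatrix}=\begin{pmatrix}\lambda a&\lambda a\\\mu a&\mu a\end{pmatrix}+\begin{pmatrix}\lambda b&-\lambda b\\\mu b&-\mu b\end{pmatrix}$, i.e. a sum of a matrix of type $A$ and one of type $B$ (scaled by the column vector $(\lambda,\mu)^t$). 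This is the key algebraic identity behind the factorization.

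Next I would compute the product on the right-hand side using the block description. Both $\E\begin{pmatrix}\lambda a&\lambda a\\\mu a&\mu a\end{pmatrix}$ and $\E\begin{pmatrix}\lambda b&-\lambda b\\\mu b&-\mu b\end{pmatrix}$ are of the form $\begin{pmatrix}I_2&X\\ \psi_{n-1}X^t\psi_1&I_{2n-2}\end{pmatrix}$ with $X$ built from the $2\times 2$ blocks $\begin{pmatrix}\lambda a&\lambda a\\\mu a&\mu a\end{pmatrix}$ (type $A$, up to the scalar column) and $\begin{pmatrix}\lambda b&-\lambda b\\\mu b&-\mu b\end{pmatrix}$ (type $B$), respectively. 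Multiplying two such block matrices, the upper-left $2\times 2$ corner becomes $I_2 + X_A \cdot (\psi_{n-1}X_B^t\psi_1)$, which at the block level is a sum over $k$ of products of a type-$A$ block with a type-$B$-derived block; by Lemma~\ref{abcd} each such product is again a scalar multiple of a type-$A$ matrix, and summing over the $n-1$ blocks gives exactly $2\lambda\mu ab$ copies (the factor $n-1$ cancels correctly because only the blocks actually contribute — here I should be careful: in the two-block normalization the blocks are placed so that the cross term is a single $A$-times-$C$-type product, yielding the coefficient $2ab$). The off-diagonal $2\times(2n-2)$ block of the product is $X_A + X_B$ (each term multiplied by $I$ from the other factor), which reassembles to the $X$ of $\E\begin{pmatrix}\lambda x&\lambda y\\\mu x&\mu y\end{pmatrix}$ after the conjugation by $Ch$ absorbs the discrepancy in the top-left corner. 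So the product differs from $\E\begin{pmatrix}\lambda x&\lambda y\\\mu x&\mu y\end{pmatrix}$ precisely by left multiplication by a matrix which is $Ch\perp I_{2n-2}$ in block form; pinning down that $Ch=\begin{pmatrix}1-2\lambda\mu ab&2\lambda^2 ab\\-2\mu^2 ab&1+2\lambda\mu ab\end{pmatrix}$ is then a $2\times 2$ check.

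For the second displayed formula for $Ch$, I would write $(\lambda,\mu)=e_1\varepsilon^t$ and simply expand $\varepsilon(I_2+2ab\,e_1^te_2)\varepsilon^{-1}$: since $e_1^te_2=e_{12}$ is the $2\times 2$ matrix unit, $\varepsilon e_{12}\varepsilon^{-1}$ has $(i,j)$ entry $\varepsilon_{i1}(\varepsilon^{-1})_{2j}$, and substituting $\varepsilon_{11}=\lambda$, $\varepsilon_{12}=\mu$ together with the cofactor expression for $\varepsilon^{-1}$ (using $\det\varepsilon=1$) yields the claimed matrix. Finally, $Ch\in\E_3(R)$ follows because $I_2+2ab\,e_{12}=\E_{12}(2ab)$ is elementary, conjugation by $\varepsilon\in\mathrm{SL}_2(R)$ lands in $\E_3(R)$ by the standard fact that $\mathrm{SL}_2(R)$ normalizes $\E_2$ inside $\E_3$ (Suslin/Whitehead), and if $\varepsilon\in\E_2(R)$ the conjugate stays in $\E_2(R)$. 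The main obstacle I anticipate is bookkeeping the sign $-(-1)^{i+j}$ in the definition of $\S_{ij}$ and making sure the cross-term coefficient comes out as exactly $2ab$ (not $2(n-1)ab$ or with a wrong sign); this forces care about which block position is used and about the $\psi_1$-twist in the splitting property, but it is purely a matter of tracking indices, not a conceptual difficulty.
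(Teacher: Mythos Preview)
Your approach is correct and is essentially the same as the paper's: the paper's own proof consists of the single line ``The first identity is easily verified and the second one follows immediately from it,'' so your direct block computation (decomposing the top block as a sum of an $A$-type and a $B$-type piece, multiplying, and reading off the $2\times 2$ correction $Ch$) is exactly the intended verification, just spelled out in more detail. Your flagged bookkeeping concern about a possible factor of $n-1$ is the only real thing to watch; it resolves because the cross term $X_A\cdot(\psi_{n-1}X_B^t\psi_1)$ is a sum over block positions of terms each of which contributes the same shape, and the coefficient in $Ch$ should be read as $\sum_i a_ib_i$ in the multi-block notation (the paper's notation suppresses this, consistent with Lemma~\ref{4}).
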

\begin{proof}
The first identity is easily verified and the
second one follows immediately from it.
\end{proof}

\begin{lemma} \label{6} Let $R$ be a commutative ring.
One  has the identities, for $a, b, \lambda, \mu, x, y \in R$, 
$$
\E \begin{pmatrix} \lambda a & \lambda a\\ \mu a & \mu a\end{pmatrix} =
\E \begin{pmatrix} x & x\\ x & x\end{pmatrix}
\E \begin{pmatrix} y & y\\ -y & -y\end{pmatrix}_{I_2 + C}, \eqno{(2)}
$$
where $x = \frac{\lambda a + \mu a}{2}$, $y = \frac{\lambda a - \mu
a}{2}$, and where
\begin{eqnarray*}
I_2 + C &=& \begin{pmatrix} 1 + 2xy & 2xy\\ -2xy & 1 - 2xy\end{pmatrix} \in {\E}_2 (R).
\end{eqnarray*}
$$
\E\begin{pmatrix} \lambda b & - \lambda b\\ \mu b & - \mu b\end{pmatrix} =
\E \begin{pmatrix} x & -x \\ x & -x\end{pmatrix}
\E \begin{pmatrix} -y & y\\ y & -y\end{pmatrix}_{I_2 + B}, \eqno{(3)}
$$
where $x = \frac{\lambda b + \mu b}{2}$, $y = \frac{\mu b - \lambda
b}{2}$, and where
\begin{eqnarray*}
I_2 + B &=& \begin{pmatrix} 1 + 2xy & -2xy\\ 2xy & 1 - 2xy\end{pmatrix} \in {\E}_2 (R).
\end{eqnarray*}
\end{lemma}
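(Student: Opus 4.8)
The plan is to verify both identities (2) and (3) by direct matrix computation, exploiting the splitting property of Lemma \ref{3} and the multiplication table for the $2 \times 2$ blocks in Lemma \ref{abcd}. Since $\E(X)$ is determined by its single nonzero block $X$ (in some position $(1,k)$), equation (2) really asserts an identity between the corresponding $2 \times 2$ data: on the left the block $\lambda a \cdot A(1)$ placed in position $k$, and on the right a product of an $\E(A)$-type matrix, an $\E(B)$-type matrix, and a conjugating factor $I_2 + C$ acting in the appropriate coordinate block. So the real content is a $4 \times 4$ (or, after the splitting, a $2 \times 2$) calculation.

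First I would fix $n = 2$ for bookkeeping (the general $n$ case is identical since everything happens in the block row $(1,k)$ and the reflected block row), and write out $\E\!\begin{pmatrix} x & x \\ x & x\end{pmatrix}$, $\E\!\begin{pmatrix} y & y \\ -y & -y\end{pmatrix}$, and the subscript operator $I_2 + C$ explicitly as $4 \times 4$ symplectic matrices using the Notation. Then I would multiply the right-hand side out. Using Lemma \ref{3}, I would split each $\E$-factor into its ``top'' $\begin{pmatrix} X \\ 0\end{pmatrix}$ and ``bottom'' $\begin{pmatrix} 0 \\ \psi_1 X^t \psi_1\end{pmatrix}$ pieces, collect the top contributions and the bottom contributions separately, and use Lemma \ref{abcd} to simplify the resulting products of $A$- and $C$-type blocks. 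The key algebraic facts I expect to invoke are $A(x)A(x') = A(2xx')$, $C(y)A(y') = C(2yy')$, $A(x)C(y') = 0$, $C(x)C(y') = 0$, together with the substitution $x = \tfrac{\lambda a + \mu a}{2}$, $y = \tfrac{\lambda a - \mu a}{2}$, which gives $x + y = \lambda a$ and $x - y = \mu a$, and hence $2xy = \tfrac{(\lambda a)^2 - (\mu a)^2}{2}$ — exactly the entries appearing in $I_2 + C$. After simplification the top part should collapse to the block $A(\lambda a)$ and (after the conjugation by $I_2 + C$ is accounted for) the bottom to $D(\lambda a)$, matching $\E\!\begin{pmatrix}\lambda a & \lambda a \\ \mu a & \mu a\end{pmatrix} = \begin{pmatrix} A(\lambda a)\\ D(\lambda a)\end{pmatrix}$. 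Identity (3) is handled the same way with $B$- and $D$-blocks in place of $A$- and $C$-blocks, using $B(x)A(x') = 0$, $D(x)B(x') = 0$, $C(x)B(x') = D(-2xx')$, etc., and the sign-adjusted substitution $y = \tfrac{\mu b - \lambda b}{2}$.

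The main obstacle I anticipate is purely sign-bookkeeping: the factor $-(-1)^{i+j}$ in the definition of $\S_{ij}$ propagates into the ``vertical'' $\psi_1 X^t \psi_1$ reflection, so the bottom components pick up signs that must be tracked carefully to land on $D(\lambda a)$ rather than $D(-\lambda a)$, and to make the entries of $I_2 + C$ (resp. $I_2 + B$) come out with the stated signs $\begin{pmatrix} 1 + 2xy & 2xy \\ -2xy & 1 - 2xy\end{pmatrix}$. I would double-check this by confirming directly that $I_2 + C$ as written is indeed a product of two elementary generators — e.g. $\E_{21}(2xy)$ times a diagonal-free factor — so that $I_2 + C \in \E_2(R)$, which is the assertion tacked onto the lemma; concretely, $\begin{pmatrix} 1 + 2xy & 2xy \\ -2xy & 1 - 2xy\end{pmatrix}$ should factor as $\E_{12}(2xy)\,\E_{21}(-2xy)$ up to the obvious rearrangement, and similarly for $I_2 + B$. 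Everything else is routine $2 \times 2$ matrix arithmetic over a commutative ring in which $2$ is invertible, so once the signs are pinned down the verification is mechanical; I would present it as ``a direct computation using Lemmas \ref{3} and \ref{abcd}'' with the substitutions spelled out and leave the entry-by-entry multiplication to the reader.
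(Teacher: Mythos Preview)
Your approach---direct matrix verification using the splitting property and the $A,B,C,D$ multiplication table---is exactly what the paper does; its entire proof reads ``This is a direct verification.'' Two bookkeeping slips to fix before you carry it out: the top block on the left of (2) is $\begin{pmatrix}\lambda a & \lambda a\\ \mu a & \mu a\end{pmatrix} = A(x)+C(y)$, not $A(\lambda a)$ (the point of the substitution $x=\tfrac{\lambda a+\mu a}{2}$, $y=\tfrac{\lambda a-\mu a}{2}$ is precisely that this mixed block splits as a sum of a pure $A$-block and a pure $C$-block); and $I_2+C(2xy)$ is not $\E_{12}(2xy)\E_{21}(-2xy)$ but rather $\E_{12}(-1)\E_{21}(-2xy)\E_{12}(1)$, which is what actually puts it in $\E_2(R)$.
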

\begin{proof} This is a direct verification.
\end{proof}
\begin{lemma} \label{7}
One has an expression of the form, for $\lambda, \mu, x, y \in R$, for $\delta \in {\rm SL}_2(R)$,

\begin{eqnarray*}
(\delta \perp I_{2n-2})\E\begin{pmatrix} \lambda x & \lambda y\\
\mu x & \mu y\end{pmatrix}(\delta \perp I_{2n-2})^{-1} &=& \E
\begin{pmatrix}\lambda' x & \lambda' y\\ \mu'x & \mu'
y\end{pmatrix},
\end{eqnarray*}
for some $\lambda'$, $\mu' \in R$.
\end{lemma}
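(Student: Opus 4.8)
The plan is to prove the identity by a single block computation, using only the defining block form of $\E(\cdot)$ together with the fact that a $2\times 2$ matrix of determinant one is symplectic.

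First I would write the left factor in block form, with blocks of sizes $2$ and $2n-2$,
\[
\E\begin{pmatrix}\lambda x & \lambda y\\ \mu x & \mu y\end{pmatrix} = \begin{pmatrix} I_2 & X\\ \psi_{n-1}X^t\psi_1 & I_{2n-2}\end{pmatrix},
\]
where $X = (X_3, X_5, \dots, X_{2n-1})$ and, by the defining notation, each block is $X_{2i-1} = \begin{pmatrix}\lambda a_i & \lambda b_i\\ \mu a_i & \mu b_i\end{pmatrix}$; in particular every $X_{2i-1}$ is $\begin{pmatrix}\lambda\\ \mu\end{pmatrix}$ times a fixed row, so all the $X_{2i-1}$ carry the common left column $(\lambda,\mu)^t$. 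Multiplying out the conjugation by $\delta\perp I_{2n-2}$ blockwise gives
\[
(\delta\perp I_{2n-2})\,\E\begin{pmatrix}\lambda x & \lambda y\\ \mu x & \mu y\end{pmatrix}\,(\delta\perp I_{2n-2})^{-1} = \begin{pmatrix} I_2 & \delta X\\ \psi_{n-1}X^t\psi_1\delta^{-1} & I_{2n-2}\end{pmatrix}.
\]

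The next step is to recognise the right-hand side as $\E(\delta X)$. Since $\delta\in{\rm SL}_2(R)$ one has $\delta^t\psi_1\delta = \det(\delta)\,\psi_1 = \psi_1$, hence $\psi_1\delta^{-1} = \delta^t\psi_1$, and therefore $\psi_{n-1}X^t\psi_1\delta^{-1} = \psi_{n-1}X^t\delta^t\psi_1 = \psi_{n-1}(\delta X)^t\psi_1$; so the conjugate is exactly $\E(\delta X)$. Finally I would check that $\delta X$ has the required shape: writing $\delta\begin{pmatrix}\lambda\\ \mu\end{pmatrix} = \begin{pmatrix}\lambda'\\ \mu'\end{pmatrix}$, one gets $\delta X_{2i-1} = \begin{pmatrix}\lambda' a_i & \lambda' b_i\\ \mu' a_i & \mu' b_i\end{pmatrix}$, so $\delta X = (\delta X_3, \dots, \delta X_{2n-1})$ is again of the form used to define $\E\begin{pmatrix}\lambda' x & \lambda' y\\ \mu' x & \mu' y\end{pmatrix}$, now with the column $(\lambda',\mu')^t = \delta(\lambda,\mu)^t$ in place of $(\lambda,\mu)^t$. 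This is the asserted identity, and $\lambda'$, $\mu'$ are the required elements of $R$.

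I do not expect a genuine obstacle here: the whole argument is bookkeeping with $2\times 2$ blocks. The only place a hypothesis is actually used is the step $\psi_1\delta^{-1} = \delta^t\psi_1$, which is precisely the condition $\delta\in{\rm SL}_2(R) = {\rm Sp}_2(R)$; this is what guarantees that conjugation by $\delta\perp I_{2n-2}$ preserves the special ``$\E$-shape'' and not merely the symplectic condition. Everything else rests on the simple observation that the blocks $X_{2i-1}$ share the common left factor $(\lambda,\mu)^t$, which $\delta$ just replaces by $\delta(\lambda,\mu)^t$.
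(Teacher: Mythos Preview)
Your proof is correct and follows essentially the same approach as the paper: a direct block computation showing that conjugation by $\delta\perp I_{2n-2}$ replaces the column $(\lambda,\mu)^t$ by $\delta(\lambda,\mu)^t$. The paper's proof simply records the resulting values $\lambda' = a\lambda + b\mu$, $\mu' = c\lambda + d\mu$ (for $\delta = \begin{pmatrix}a&b\\c&d\end{pmatrix}$) and leaves the verification implicit, whereas you have written out the check that the lower-left block transforms correctly via the identity $\psi_1\delta^{-1} = \delta^t\psi_1$.
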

\begin{proof} Take $\lambda' = \lambda a + \mu b$, $\mu' = c \lambda + \mu d$,
where $e_1 \delta = (a, b)$, $e_2\delta = (c, d)$.
\end{proof}

We now come to the Initial Structure Theorem:

\begin{theorem}\label{thm}{\rm \textbf{(Initial Structure Theorem)}}\\[2mm]
Let  $R$ be a commutative ring.  Then
{\rm ESp}$_{2n}(R)$ is
generated by matrices of type $\E_2 (R)$, $\E(A)$, $\E(B)$, $\E(C)$, $\E(D)$,
$I_2 + C$, $I_2 + B$, the last two being ``suitably placed''
elementary symplectic matrices.

Furthermore, one has a decomposition of a $\varepsilon_{\psi} \in
{\rm ESp}_{2n}(R)$ as a product of the type,

\begin{eqnarray}
\varepsilon_{\psi} & = & \small{^{\delta} \begin{pmatrix}A + C\\ D + C\end{pmatrix}
\begin{pmatrix}B +
D\\ B + A\end{pmatrix} \cdots \begin{pmatrix} B + D \\ B + A\end{pmatrix}}
\nonumber\\ & & \\
{} & = & \small{^{\delta} \begin{pmatrix}A + C \\ 0\end{pmatrix}
\begin{pmatrix} 0 \\ D + C \end{pmatrix}
\begin{pmatrix} B + D \\ 0\end{pmatrix} \begin{pmatrix} 0 \\ B + A \end{pmatrix}
  \cdots \begin{pmatrix}A + C \\
0\end{pmatrix} \begin{pmatrix}0 \\ D + C \end{pmatrix}} \nonumber \\
& & \\
{} & = & \tiny{^{\delta}\E(A)\E(C)_{I_2 + C}\E(B)\E(D)_{I_2 + B} \cdots
\E(A)\E(C)_{I_2 + C} \E(B)\E(D)_{I_2 + B}}  \nonumber \\
{} & & \\
{} & = & (\delta_1 \perp  \cdots \perp \delta_n)\E(A)\E(C)\E(B)\E(D) \cdots
 \E(A)\E(C) \E(B)\E(D) \nonumber\\
{} & & \\
{} & = & ^\delta\E(A)\E(C)\E(B)\E(D) \cdots \E(A)\E(C) \E(B)\E(D)\nonumber\\
& &
\end{eqnarray}
for some $\delta, \delta_1, \ldots, \delta_n \in {\rm E}_2(R)$
\end{theorem}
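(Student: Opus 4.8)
\noindent\emph{Proof strategy.}
The plan is to prove the two halves of the statement separately. First I would show that the subgroup $H'$ generated by $\E_2(R)$ together with the matrices of type $\E(A)$, $\E(B)$, $\E(C)$, $\E(D)$ and the suitably placed $I_2+B$, $I_2+C$ is exactly $\mathrm{ESp}_{2n}(R)$; then, for a given $\varepsilon_{\psi}\in\mathrm{ESp}_{2n}(R)$, I would build the displayed chain of factorisations by pushing it successively through Lemmas~\ref{1}, \ref{4}, \ref{5}, \ref{6}, and tidying the result with Lemmas~\ref{abcd}, \ref{3}, \ref{7}.

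For $H'\subseteq\mathrm{ESp}_{2n}(R)$: every listed generator is manifestly symplectic of determinant $1$, and, by Lemma~\ref{2} (one may take $(\lambda,\mu)=(1,0)$ in the relevant blocks) or directly by the $\lambda=0$ case of Lemma~\ref{1}, each of $\E(A),\E(B),\E(C),\E(D),I_2+B,I_2+C$ is a product of standard $\S_{1j}$'s and $\E_2(R)$-factors. For the reverse inclusion I would argue as follows. Setting $\lambda=0$ and then $y=0$ (resp. $x=0$) in Lemma~\ref{1}, together with its analogue placing the nonzero block in an arbitrary position, gives $\S_{1,2k-1}(x)=\E^{k}\bigl(\begin{smallmatrix}x&0\\ 0&0\end{smallmatrix}\bigr)$ and $\S_{1,2k}(y)=\E^{k}\bigl(\begin{smallmatrix}0&y\\ 0&0\end{smallmatrix}\bigr)$, and Lemmas~\ref{5} and~\ref{6} then rewrite these as products of $\E_2(R)$-factors and matrices of type $\E(A),\dots,\E(D),I_2+B,I_2+C$; hence every first-column generator $\S_{1j}(\ast)$ with $3\le j\le 2n$ lies in $H'$, as do $\E_{12},\E_{21}\in\E_2(R)$. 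Transposition carries $\E(X)$ to $\E(\psi_1 X\psi_{n-1})$, which permutes the shapes $A,B,C,D$ and fixes $\E_2(R)$, so $H'$ is transpose-stable, giving $\S_{j1}(\ast)=\S_{1j}(\ast)^{t}\in H'$. Since $\psi_1\in\E_2(R)$, conjugation by $\psi_1\perp I_{2n-2}\in H'$ sends $\S_{1j}(\ast)$ to $\S_{2j}(\ast)$ and $\S_{j1}(\ast)$ to $\S_{j2}(\ast)$; and the remaining $\S_{ij}(\ast)$ are produced by the Steinberg-type commutator identities $[\S_{i1}(\mu),\S_{1j}(\nu)]=\S_{ij}(\pm\mu\nu)$. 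Thus $\mathrm{ESp}_{2n}(R)=\langle\E_{12},\E_{21},\S_{ij}\rangle\subseteq H'$.

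For the factorisation, I would first write $\varepsilon_{\psi}$ as a word in the standard generators and, using that each $\S_{ij}(\ast)$ is a $\psi_1\perp I$-conjugate of some $\S_{1j}(\ast)$ together with the commutator relations, reorganise it into a product of ``column blocks'' ${}^{\delta}\bigl(\prod_{i=3}^{2n}\S_{1i}(y_i)\bigr)^{\delta^{-1}}$, $\delta\in\E_2(R)$, arranged in the required alternating order. Lemma~\ref{4} turns each column block into ${}^{\sigma}\prod_{i=2}^{n}\E^{i}\bigl(\begin{smallmatrix}\lambda y_{2i-1}&\lambda y_{2i}\\ \mu y_{2i-1}&\mu y_{2i}\end{smallmatrix}\bigr)$; Lemma~\ref{5} splits each such $\E^{i}\bigl(\begin{smallmatrix}\lambda x&\lambda y\\ \mu x&\mu y\end{smallmatrix}\bigr)$ as ${}^{Ch}\E\bigl(\begin{smallmatrix}\lambda a&\lambda a\\ \mu a&\mu a\end{smallmatrix}\bigr)\E\bigl(\begin{smallmatrix}\lambda b&-\lambda b\\ \mu b&-\mu b\end{smallmatrix}\bigr)$; and Lemma~\ref{6}, equations $(2)$ and $(3)$, splits the two factors further into $\E(A)\,\E(C)_{I_2+C}$ and $\E(B)\,\E(D)_{I_2+B}$. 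All the $\E_2(R)$- and $Ch$-prefactors that appear can be gathered on the far left, since by Lemma~\ref{7} conjugation of an $\E\bigl(\begin{smallmatrix}\lambda x&\lambda y\\ \mu x&\mu y\end{smallmatrix}\bigr)$ by an element of $\E_2(R)$ (or by one of the diagonal $I_2+B$, $I_2+C$) returns a matrix of the same form; this yields the line ${}^{\delta}\E(A)\E(C)_{I_2+C}\E(B)\E(D)_{I_2+B}\cdots$. Applying the splitting property (Lemma~\ref{3}) to each $\E(\text{a rank-}0\text{ block})$ and then regrouping consecutive factors gives the two ``block'' lines ${}^{\delta}\bigl(\begin{smallmatrix}A+C\\ 0\end{smallmatrix}\bigr)\bigl(\begin{smallmatrix}0\\ D+C\end{smallmatrix}\bigr)\cdots$ and ${}^{\delta}\bigl(\begin{smallmatrix}A+C\\ D+C\end{smallmatrix}\bigr)\bigl(\begin{smallmatrix}B+D\\ B+A\end{smallmatrix}\bigr)\cdots$. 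Finally, to obtain the remaining two lines I would sweep the suitably placed $I_2+B$, $I_2+C$ leftward past the basic blocks, each crossing computed from the multiplication table of Lemma~\ref{abcd}; this leaves behind a block-diagonal factor $\delta_1\perp\cdots\perp\delta_n$ with $\delta_i\in\E_2(R)$, and conjugating $\delta_2,\dots,\delta_n$ back through the $\E^{i}$-blocks (which link block $i$ to block $1$) absorbs them into one $\E_2(R)$-prefactor $\delta$.

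The hardest part will be the combinatorial bookkeeping inside the factorisation: arranging the initial word into column blocks in \emph{precisely} the alternating pattern $\bigl(\begin{smallmatrix}A+C\\ D+C\end{smallmatrix}\bigr)\bigl(\begin{smallmatrix}B+D\\ B+A\end{smallmatrix}\bigr)\cdots$, tracking through Lemmas~\ref{5}--\ref{6} which of the four shapes each emerging factor carries, and collapsing $\delta_1\perp\cdots\perp\delta_n$ to a single top-left $\delta\in\E_2(R)$. The individual rewritings are routine given Lemmas~\ref{1}--\ref{7} and the table of Lemma~\ref{abcd}, but orchestrating them so that the shapes line up exactly as claimed is where the real work lies.
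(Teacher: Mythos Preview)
Your overall architecture matches the paper's: reduce to the generators $\E_{12},\E_{21},\S_{1i},\S_{2j}$, push the $\E_2(R)$-factors to the left via Lemma~\ref{7}, then apply Lemmas~\ref{5}--\ref{6} to reach Identity~(C), and finally move the suitably placed $I_2+B,I_2+C$ leftward to get (D) and (E). Two points, however, need correction.

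\textbf{The passage from (D) to (E).} You write that ``conjugating $\delta_2,\dots,\delta_n$ back through the $\E^{i}$-blocks absorbs them into one $\E_2(R)$-prefactor $\delta$''. This is not what happens, and in general cannot happen: a block $I_{2k-2}\perp\delta_k\perp I_{2n-2k}$ cannot be transported to the top-left $2\times2$ corner by conjugation with $\E^{k}$-type matrices. What the paper does instead (Corollary~\ref{smaller}, derived from the commutator table Lemma~\ref{commutator}) is observe that each such $\delta_k$, being a product of $I_2+B$'s and $I_2+C$'s in position $k$, is itself a product of elements of type $\E(A),\E(B),\E(C),\E(D)$; for instance $I_{2i-2}\perp(I_2+C(-4xy))\perp I_{2n-2i}=[\E(A_i)(x),\E(C_i)(y)]$. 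Thus $\delta_2,\dots,\delta_n$ are absorbed into the \emph{$H$-part} on the right, not into the $\E_2(R)$-prefix, and one takes $\delta=\delta_1$. Your sketch is missing exactly this commutator input.

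\textbf{The passage from (C) to (D).} Invoking only the $2\times2$ multiplication table (Lemma~\ref{abcd}) is not quite sufficient: when you slide an $I_{2j-2}\perp(I_2+Y)\perp I_{2n-2j}$ past an $\E(X_i)$, extra factors of type $\E(Z_i)$ and further $I_2+Y'$ terms are produced. The paper records these commutators explicitly as Lemma~\ref{8} (forward-referenced). You can indeed derive them from Lemma~\ref{abcd} together with the splitting property (Lemma~\ref{3}), but you should say so and note that the process spawns new basic factors; otherwise it is unclear why the leftward sweep terminates with a block-diagonal $(\delta_1\perp\cdots\perp\delta_n)$ rather than a mess.

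\textbf{A minor issue in the generation argument.} Your transpose step (``$H'$ is transpose-stable'') is shaky: the transpose of a symplectic matrix is generally not symplectic, and the formula $\E(X)^{t}=\E(\psi_1 X\psi_{n-1})$ is not well-typed for the block $X$ in question. The paper avoids this entirely: it simply cites the standard commutator relations to say that $\mathrm{ESp}_{2n}(R)$ is already generated by $\E_{12},\E_{21},\S_{1i},\S_{2j}$, and then the analogue of Lemma~\ref{4} for the $\S_{2j}$'s puts everything into the form~$(4)$. Replacing your transpose argument with this observation both simplifies and corrects that step.
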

Proof:  Via standard commutator relations one can show that ESp$_{2n}(R)$  
is generated by means of the elementary symplectic
generators $E_{2 1}(x)$, $E_{12}(x)$, $S_{1 i}(x)$, $S_{2 j}(x)$, for $x \in
R$, $3 \leq i, j \leq 2n$.

One has a similar identity  to that of Lemma \ref{4}
on conjugating an expression of type $\prod_{j = 3}^{2n}S_{2j}(y_j)$.
Therefore, via Lemma 2.9, if $\varepsilon_{\psi} \in {\rm ESp}_{2n}(R)$, then
$\varepsilon_{\psi}$ can be written as a product of the form,
$$
 \varepsilon_{\psi} =
^{\delta} \prod \E\begin{pmatrix} \lambda x & \lambda y\\ \mu x & \mu y
\end{pmatrix}, \eqno{(4)}
$$
for some $\delta \in$~ E$_2(R)$, $x$'s, $y$'s, $\lambda$'s, $\mu$'s in $R$.
This is got by ``moving'' all the $E_{21}(*)$'s and $E_{12}(*)$'s
occuring to the ``left''. (In fact, one can also assert that each
$(\lambda, \mu) \sim_{E} (1, 0)$.)

In the notation of Lemma \ref{5}
$$
\E\begin{pmatrix} \lambda x & \lambda y \\ \mu x & \mu y \end{pmatrix} =
^{Ch} \E \begin{pmatrix} \lambda a & \lambda a \\ \mu a & \mu a\end{pmatrix}
\E \begin{pmatrix} \lambda b & -\lambda b \\ \mu b & - \mu b \end{pmatrix}, \eqno{(5)}
$$
where $Ch$ is the matrix given in Lemma \ref{5}. Note that $Ch$ is an
elementary matrix in E$_2(R)$ as $(\lambda, \mu) \sim_{E} (1, 0)$.

Now pooling together the three identitites (1-3) we get an expression, for
$\lambda, \mu, x, y \in R$,
$$
\E\begin{pmatrix} \lambda x & \lambda y \\ \mu x & \mu y\end{pmatrix} =
^{Ch}\E(A)\E(C)_{I_2 + C}\E(B)\E(D)_{I_2 + B}. \eqno{(6)}
$$
Now Identity (C) follows from Equation (5) via Lemma 2.8, Lemma 2.9. 

\medskip

Proof of Identity (D): We begin with the situation in Identity
(C). Using the equations in Lemma \ref{8} (see later) we can move the matrices
of the type $\{I_{2s}\perp (I_2 + B)\perp I_{2t}\}$, $2s + 2 + 2t
= 2n$, $\{I_{2s} \perp (I_2 + C)\perp I_{2t}\}$, for some $s\geq
1$, $t \geq 0$, to the left. Lemma \ref{8} shows that this is
possible without changing the form too much. Sometimes there are
additional terms input, but this term does not hinder us from
taking the matrix to the left. Moreover, the process is finite and
terminates with an element of the type $(\delta_1 \perp  \ldots
\perp \delta_n)$ as required.  we may assume that $\delta_2, 
\ldots, \delta_n$ are products of elements of the form
$I_2 + B$ and $I_2 + C$.

\medskip

We will deduce Identity (E) from Identity (D) by using the
commutator relations of type $[\E(X), \E(Y)]$.

\begin{lemma} \label{commutator}
We record all the possible commutator relations of type\\
 $[\E(X_i)(x), \E(Y_j)(y)]$, where $ X,Y\,\in \{ A,B,C,D\} $
  and $i,j \in \{2,3,\dots ,n\} $.
\begin{equation*}
 [ \E(X_i)(x),\; \E(X_j)(y)] = \; I_{2n}  \text{, for all  $ i,\, j $ where
$ X \in \{A,\:B,\:C,\:D \} $. }\quad\quad\quad\quad\quad
\end{equation*}
\begin{eqnarray*}
[\E(A_i)(x),\; \E(B_j)(y)]=
 \begin{cases}
              I_{2n}, &\text{ if  $ i \neq j $;}\quad\quad\quad\quad\quad\quad\\
             \{ I_2 + B(4xy) \} \perp I_{2n-2}, &\text{ if  $ i= j $. }
\end{cases}
\end{eqnarray*}
\begin{eqnarray*}
[\E(A_i)(x),\; \E(C_j)(y)]=
\begin{cases}
    I_{2(i-1)}  \perp  \{\E(C_{j-i+1})(-2xy)\} \perp I_{2(n-j)},  &\text{ if  $ i < j $;}\\
    I_{2(i-1)}  \perp \{ I_2 +C(-4xy)\} \perp  I_{2(n-i)}, &\text{ if  $ i= j $; }\\
    I_{2(j-1)}  \perp  \{\E(C_{i-j+1})(-2xy)\} \perp I_{2(n-i)},  &\text{ if  $ i > j $.}
\end{cases}
\end{eqnarray*}
\begin{eqnarray*}
[ \E(A_i)(x),\; \E(D_j)(y)]=
\begin{cases}
      I_{2(i-1)}  \perp  \{\E(D_{j-i+1})(-2xy)\} \perp I_{2(n-j)}, &\text{ if  $ i < j $; }\\
      I_{2(j-1)}  \perp  \{\E(D_{i-j+1})(-2xy)\} \perp I_{2(n-i)}, &\text{ if  $ i > j $. }
\end{cases}
\end{eqnarray*}
\begin{eqnarray*}
[\E(B_i)(x),\; \E(A_j)(y)]=
\begin{cases}
         I_{2n}, & \text{ if  $ i \neq j $;}\quad\quad\quad\quad\quad\quad\\
        \{ I_2 + B(-4xy) \} \perp I_{2n-2},&\text{ if  $ i= j $. }
\end{cases}
\end{eqnarray*}

\begin{eqnarray*}
[ \E(B_i)(x),\; \E(C_j)(y)]=
\begin{cases}
       I_{2(i-1)}  \perp  \{\E(A_{j-i+1})(2xy)\} \perp I_{2(n-j)}, &\text{ if  $ i < j $; }\\
       I_{2(j-1)}  \perp  \{\E(A_{i-j+1})(2xy)\} \perp I_{2(n-i)}, &\text{ if  $ i > j $. }
\end{cases}
\end{eqnarray*}

\begin{eqnarray*}
[ \E(B_i)(x),\; \E(D_j)(y)]=
\begin{cases}
    I_{2(i-1)}  \perp  \{\E(B_{j-i+1})(-2xy)\} \perp I_{2(n-j))}, &\text{ if  $ i < j $;}\\
    I_{2(i-1)}  \perp \{ I_2 +B(-4xy)\} \perp I_{2(n-i)}, &\text{ if $i= j $; }\\
    I_{2(j-1)}  \perp  \{\E(B_{i-j+1})(-2xy)\} \perp I_{2(n-i))}, &\text{ if  $ i > j $.}
\end{cases}
\end{eqnarray*}
\begin{eqnarray*}
[ \E(C_i)(x),\; \E(A_j)(y)]=
\begin{cases}
     I_{2(i-1)}  \perp  \{ \E(C_{j-i+1})(2xy)\} \perp I_{2(n-j)}, & \text{ if  $ i < j $;}\\
     I_{2(i-1)}  \perp \{ I_2 + C(4xy)\} \perp  I_{2(n-i)}, &\text{ if  $ i = j $;} \\
    I_{2(j-1)}  \perp  \{ \E(C_{i-j+1})(2xy)\} \perp I_{2(n-i)}, &\text{ if  $ i > j $.}
\end{cases}
\end{eqnarray*}
\begin{eqnarray*}
[ \E(C_i)(x),\; \E(B_j)(y)]=
\begin{cases}
      I_{2(i-1)}  \perp  \{\E(D_{j-i+1})(-2xy)\} \perp I_{2(n-j)}, &\text{ if  $ i < j $; }\\
      I_{2(j-1)}  \perp  \{\E(D_{i-j+1})(-2xy)\} \perp I_{2(n-i)}, &\text{ if  $ i > j $. }
\end{cases}
\end{eqnarray*}
\begin{eqnarray*}
[ \E(C_i)(x),\; \E(D_j)(y)]=
\begin{cases}
      I_{2n}, & \text{ if  $ i \neq j $; }\quad\quad\quad\quad\quad\quad\\
     \{ I_2 + C(4xy) \} \perp I_{2n-2}, &\text{ if  $ i= j $. }
\end{cases}
\end{eqnarray*}
\begin{eqnarray*}
[ \E(D_i)(x),\; \E(A_j)(y)]=
\begin{cases}
       I_{2(i-1)}  \perp  \{\E(A_{j-i+1})(2xy)\} \perp I_{2(n-j)}, &\text{ if  $ i < j $; }\\
      I_{2(j-1)}  \perp  \{\E(A_{i-j+1})(2xy)\} \perp I_{2(n-i)}, &\text{ if  $ i > j $. }
\end{cases}
\end{eqnarray*}
\begin{eqnarray*}
[ \E(D_i)(x),\; \E(B_j)(y)]=
\begin{cases}
     I_{2(i-1)}  \perp  \{\E(B_{j-i+1})(2xy)\} \perp I_{2(n-j)},  &\text{ if  $ i < j $;}\\
     I_{2(i-1)}  \perp \{ I_2 + B(4xy)\} \perp  I_{2(n-i)}, &\text{ if  $ i= j $; }\\
     I_{2(j-1)}  \perp  \{\E(B_{i-j+1})(2xy)\} \perp I_{2(n-i)},  &\text{ if  $ i > j $.}
\end{cases}
\end{eqnarray*}

\begin{eqnarray*}
[ \E(D_i)(x),\; \E(C_j)(y)]=
\begin{cases}
     I_{2n}, & \text{ if  $ i \neq j $; }\quad\quad\quad\quad\\
     \{ I_2 + C(-4xy) \} \perp I_{2n-2}\}, &\text{ if  $ i = j $.}
\end{cases}
\end{eqnarray*}
\begin{align*}
[ \E(A_2)(x),\; \E(D_2)(y) ]= \quad\quad\quad\quad   &   \\
 & \hspace{-3.6cm} {\scriptsize\left(\begin{array}{cc}
{I_2+ A(8x^2y^2)+A(2xy)+D(2xy)}
&D(4xy^2)+A(-4x^2y)\\
A(4xy^2)+ D(-4x^2y) & I_2+D(-8x^2y^2) + D(-2xy) +A(-2xy)
\end{array}\right)}   \\
 [ \E(B_2)(x),\; \E(C_2)(y)]  =\quad\quad\quad\quad  &  \\
 & \hspace{-3.5cm}{\scriptsize \left( \begin{array}{cc}
I_2+A(2xy)+D(2xy)+A(8x^2y^2) & B(-4x^2y)+C(4xy^2)\\
 B(-4x^2y)+C(4xy^2) & I_2 +A(2xy)+D(2xy)+A(8x^2y^2)
\end{array}\quad\quad \right )}\\
 [\E(C_2)(x),\; \E(B_2)(y)] =\quad\quad\quad\quad & \\
 & \hspace{-4.2cm}{\scriptsize \left( \begin{array}{cc}
I_2+A(-2xy)+D(-2xy)+D(-8x^2y^2)& B(4xy^2)+C(-4x^2y)\\
B(4xy^2)+C(-4xy^2) & I_2+A(-2xy)+D(-2xy)+D(-8x^2y^2)
\end{array}\right)}\\
[\E(D_2)(x),\; \E(A_2)(y) ] =\quad\quad\quad\quad  & \\
 & \hspace{-3.5cm}{\scriptsize \left(  \begin{array}{cc}
I_2+A(-2xy)+D(-2xy)+D(-8x^2y^2) & A(4xy^2)+D(-4x^2y)\\
D(4xy^2)+A(-4x^2y)& I_2 + A(2xy) + D(2xy)+ A(8x^2y^2)
\end{array}\right )}\\
\end{align*}
\hfill $\square$
\end{lemma}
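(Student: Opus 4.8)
The plan is to reduce the whole lemma to a block-matrix computation in which Lemma \ref{abcd} does all the algebra. First I would fix the $n\times n$ block form (blocks of size $2$): for a $2\times 2$ matrix $P$ write $P\otimes e_{ab}$ for the $2n\times 2n$ matrix having $P$ in block-position $(a,b)$ and $0$ elsewhere, where $e_{ab}$ is the $n\times n$ matrix unit. By the opening Notation and the splitting property (Lemma \ref{3}) one has
\[
\E(A_i)(x) = I_{2n} + A(x)\otimes e_{1i} + D(x)\otimes e_{i1},\qquad
\E(B_i)(x) = I_{2n} + B(x)\otimes e_{1i} + B(x)\otimes e_{i1},
\]
and likewise $\E(C_i)(x) = I_{2n} + C(x)\otimes(e_{1i}+e_{i1})$ and $\E(D_i)(x) = I_{2n} + D(x)\otimes e_{1i} + A(x)\otimes e_{i1}$: the $(i,1)$-block is $\psi_1 X^t\psi_1$, and the four dualities $A'=D$, $B'=B$, $C'=C$, $D'=A$ are exactly why the paper already writes these generators as $\begin{pmatrix}A\\D\end{pmatrix}$, $\begin{pmatrix}B\\B\end{pmatrix}$, $\begin{pmatrix}C\\C\end{pmatrix}$, $\begin{pmatrix}D\\A\end{pmatrix}$. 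Each of these matrices is symplectic since the underlying $\det X = 0$.

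Next, for a fixed ordered pair of types I would set $U = \E(X_i)(x) - I_{2n} = xP_X\otimes e_{1i} + xP_{X'}\otimes e_{i1}$ and $V = \E(Y_j)(y) - I_{2n} = yP_Y\otimes e_{1j} + yP_{Y'}\otimes e_{j1}$, where $P_{(\cdot)}$ denotes the $2\times 2$ type with argument $1$. The key preliminary observation is a one-line check from Lemma \ref{abcd}: $P_X P_{X'} = P_{X'}P_X = 0$ for each of the four types — these are precisely $A(x)D(y)=0$, $B(x)B(y)=0$, $C(x)C(y)=0$, $D(x)A(y)=0$ — so $U^2 = V^2 = 0$. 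Hence $\E(X_i)(-x) = I-U$ and $\E(Y_j)(-y) = I-V$ are honest inverses and
\[
[\E(X_i)(x),\E(Y_j)(y)] = (I+U)(I+V)(I-U)(I-V) = I + UV - VU - UVU + VUV + UVUV .
\]
Then I would compute $UV$ and $VU$: each is a sum of at most two $2\times 2$ blocks, whose entries are products of two of the four types (read off from Lemma \ref{abcd}, vanishing for exactly half the pairs) and whose positions are forced by $e_{ai}e_{ib} = e_{ab}$; concretely $UV = xy\,(P_{X'}P_Y)\otimes e_{ij}$ when $i\neq j$, and $UV = xy\,(P_X P_{Y'}\otimes e_{11} + P_{X'}P_Y\otimes e_{ii})$ when $i=j$.

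Finally I would split into cases by the relative position of the ``feet'' $\{1,i\}$ and $\{1,j\}$. If $i\neq j$: $UV$, $VU$ are supported on $e_{ij}$, $e_{ji}$, the cubic and quartic terms vanish by the same vanishing relations, and the two surviving off-diagonal blocks — which are dual types with a common scalar $\pm 2xy$, as they must be for the commutator to be symplectic — assemble, after padding the first $\min(i,j)-1$ pairs with $I_2$ and re-indexing the remaining hyperbolic pairs, into the stated $\E(Z_{|i-j|+1})(\pm 2xy)$. If $i=j$: $UV$, $VU$ are supported on the diagonal blocks $e_{11}$, $e_{ii}$; for the eight pairs whose stated answer is $I_2+B(\pm 4xy)$ or $I_2+C(\pm 4xy)$ the relevant products collapse to one block, the cubic/quartic terms vanish, and $UV$ and $-VU$ contribute the same type with the same sign, doubling $2xy$ to $4xy$ on the appropriate diagonal block ($(1,1)$ or $(i,i)$ according to which of $UV$, $VU$ lands there). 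The four exceptional pairs $[\E(A_2),\E(D_2)]$, $[\E(B_2),\E(C_2)]$, $[\E(C_2),\E(B_2)]$, $[\E(D_2),\E(A_2)]$ are precisely those for which both $UV$ and $VU$ meet both $e_{11}$ and $e_{ii}$; there the cubic and quartic terms survive, with $UVUV = (UV)^2$ supplying the $A(8x^2y^2)$, $D(-8x^2y^2)$ entries (since $A(2xy)A(2xy)=A(8x^2y^2)$, etc.), and I would just multiply out the four $4\times 4$ matrices (only blocks $1$ and $2$ occur) to read off the displayed block arrays.

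The main obstacle is not conceptual but a bookkeeping one: getting every sign right. The signs descend from $X' = \psi_1 X^t\psi_1$ together with the order-sensitive products of Lemma \ref{abcd} ($A(x)A(y)=A(2xy)$ versus $D(x)D(y)=D(-2xy)$, $C(x)A(y)=C(2xy)$ versus $D(x)C(y)=C(-2xy)$, and so on), so one must be scrupulous about the order of multiplication and about which block of $U$ or $V$ supplies which factor. I would also double-check the block index $|i-j|+1$ of the shifted generator directly from $e_{i1}e_{1j}=e_{ij}$ after the re-indexing, and verify the four exceptional identities once by hand at $n=2$, where the matrices are genuinely $4\times 4$ and the surviving quartic term is the delicate part. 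The factor-of-two discrepancy ($2xy$ off the diagonal, $4xy$ on it) needs no extra work once one notices it is forced by $UV$ and $-VU$ adding, rather than cancelling, on a diagonal block.
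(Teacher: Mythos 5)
Your proposal is correct, and it checks out in detail: writing $\E(X_i)(x)=I_{2n}+U$ with $U=xP_X\otimes e_{1i}+xP_{X'}\otimes e_{i1}$, the dualities $A'=D$, $B'=B$, $C'=C$, $D'=A$ and the vanishing products $AD=DA=BB=CC=0$ of Lemma \ref{abcd} do give $U^2=V^2=0$, hence the closed six-term expansion $[\,I+U,I+V\,]=I+UV-VU-UVU+VUV+UVUV$, and I verified that this formula reproduces the table in all the case types (e.g.\ for $[\E(A_i)(x),\E(D_i)(y)]$ with $n=2$ the terms $UV-VU$, $-UVU$, $+VUV$ and $(UV)^2$ supply exactly the entries $A(2xy)+D(2xy)$, $A(-4x^2y)$, $D(4xy^2)$ and $A(8x^2y^2)$ of the displayed block array). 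The paper itself offers no argument for this lemma — it is stated as a record of direct verifications and closed with a box — so there is nothing to diverge from; but your organizing device is genuinely better than brute force, since the single identity above, together with the multiplication table of the four types and the index calculus $e_{ab}e_{cd}=\delta_{bc}e_{ad}$, reduces the forty-odd entries to one uniform computation and explains structurally why the answer is a shifted generator $\E(Z_{|i-j|+1})(\pm 2xy)$ when $i\neq j$, a single $I_2+B(\pm4xy)$ or $I_2+C(\pm4xy)$ block when $i=j$ and the types are "half-orthogonal", and a full quartic expression only for the four pairs $(A,D)$, $(D,A)$, $(B,C)$, $(C,B)$ with $i=j$. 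The only caution is the one you already flag: the signs depend on the order-sensitive products in Lemma \ref{abcd}, so each line still needs the two relevant products looked up in the right order.
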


\begin{cor}\label{smaller}
The elements
$\{I_{2r} \perp (I_2 + Y) \perp I_{2s}\}$, $r\geq 0$, $s \geq 0$,
$2r + 2 + 2s = 2n$, $Y = B$ or $C$, are in the
subgroup of {\rm ESp}$_{2n}(R)$ generated by elements of the form $\E(A)$,
$\E(B)$,  $\E(C)$,  $\E(D)$.
\end{cor}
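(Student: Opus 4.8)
The plan is to obtain the elements $\{I_{2r} \perp (I_2 + Y) \perp I_{2s}\}$, for $Y = B$ or $C$, directly from the commutator identities recorded in Lemma \ref{commutator}, since each such commutator of two basic elementary symplectic generators lies \emph{by definition} in the subgroup generated by $\E(A), \E(B), \E(C), \E(D)$ (commutators of elements of a group lie in that group). Scanning the list, the ``diagonal'' cases $i = j$ produce precisely the blocks we want: $[\E(A_i)(x), \E(B_i)(y)] = \{I_2 + B(4xy)\} \perp I_{2n-2}$ gives $I_2 + B$ in the first slot; $[\E(A_i)(x), \E(C_i)(y)] = I_{2(i-1)} \perp \{I_2 + C(-4xy)\} \perp I_{2(n-i)}$ gives $I_2 + C$ in an arbitrary interior slot; $[\E(C_i)(x), \E(A_i)(y)] = I_{2(i-1)} \perp \{I_2 + C(4xy)\} \perp I_{2(n-i)}$ covers the sign; $[\E(B_i)(x), \E(D_i)(y)]$ and $[\E(D_i)(x), \E(B_i)(y)]$ give $I_2 + B$ in an interior slot, with both signs available.

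The one genuine gap to address is the \emph{first} slot, $r = 0$: the blocks $\E(A), \ldots, \E(D)$ are indexed by $i \in \{2, \dots, n\}$, i.e.\ they never touch the first $2 \times 2$ block of the $2n \times 2n$ matrix, so no commutator of the above form has $I_2 + Y$ sitting in position $r = 0$. Here I would invoke the symplectic nature of these matrices together with a conjugation: writing $\psi_n = \psi_1 \perp \psi_{n-1}$ and noting that $\E(X) = \begin{pmatrix} I_2 & X \\ \psi_{n-1} X^t \psi_1 & I_{2n-2}\end{pmatrix}$, one sees that conjugating by an element that swaps the first symplectic block with the $k$-th — which is itself a product of $\S_{ij}$'s, hence available after one checks it lies in $H$, or more directly by using Lemma \ref{7} and the ``suitably placed'' structure — moves an interior $I_2 + Y$ to the front. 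Alternatively, and more cleanly, since $2 \in R^\times$ one has $I_2 + Y(y) = (I_2 + Y(y/2))^2$ is irrelevant; instead use that $I_2 + B$ and $I_2 + C$ in the first slot can be produced as $Ch$-type matrices: by Lemma \ref{5}, matrices $\varepsilon(I_2 + 2ab\, e_1^t e_2)\varepsilon^{-1}$ with $\varepsilon \in \E_2(R)$ lie in $\E_2(R)$, and such a matrix in the first slot equals $^{\delta}\E(\text{something})$-conjugates already shown to be in $H$ via Lemmas \ref{6} and \ref{7}; tracking through identities (2) and (3) of Lemma \ref{6} exhibits $(I_2 + C) \perp I_{2n-2}$ and $(I_2 + B) \perp I_{2n-2}$ as products of $\E(A), \E(B), \E(C), \E(D)$ with one interior $I_2 + C$ or $I_2 + B$ factor, and the latter we have already placed in $H$ by the commutator computation.

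Thus the step order is: first read off from Lemma \ref{commutator} that every interior block ($r \geq 1$) of the form $I_{2r} \perp (I_2 + Y) \perp I_{2s}$, $Y = B, C$, with either sign of the entry, is a commutator of two of the listed generators, hence lies in $H$; second, handle the boundary case $r = 0$ by the conjugation/Lemma \ref{6} argument above, reducing a first-slot $I_2 + B$ or $I_2 + C$ to a product of basic generators and one interior such block. I expect the main obstacle to be exactly this boundary case $r = 0$: the asymmetry in the indexing ($i \geq 2$) means it does not fall out of the commutator table, and one must argue — carefully but not laboriously — that the $\E_2(R)$-factor produced by the $Ch$-matrix of Lemma \ref{5}, when sitting in the leading $2 \times 2$ corner, is itself expressible through the four basic types; once that is settled the corollary follows, and it is in turn what licenses the reductions ``$\delta \in \E_2(R)$'' and the elimination of the $I_2 + B$, $I_2 + C$ factors in the later identities (D) and (E) of Theorem \ref{thm}.
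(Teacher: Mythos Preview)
Your core approach---read off the elements $\{I_{2r}\perp(I_2+Y)\perp I_{2s}\}$ as commutators from Lemma~\ref{commutator}---is exactly what the paper does, and your first paragraph already contains a complete proof. The difficulty you raise in the second paragraph is illusory.

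You correctly observe that $[\E(A_i)(x),\E(B_i)(y)]=\{I_2+B(4xy)\}\perp I_{2n-2}$ places $I_2+B$ in the first slot ($r=0$), and then, two sentences later, assert that ``no commutator of the above form has $I_2+Y$ sitting in position $r=0$''. These statements contradict each other; the first one is right. What you actually missed in your scan of the table is the entry
\[
[\E(C_i)(x),\E(D_i)(y)]=\{I_2+C(4xy)\}\perp I_{2n-2},
\]
which handles $Y=C$ at $r=0$ in the same way (and $[\E(D_i)(x),\E(C_i)(y)]$ gives the other sign). Since $2\in R^\times$, every element of $R$ is of the form $4xy$, so these commutators produce $I_2+B(\lambda)$ and $I_2+C(\lambda)$ in the first slot for arbitrary $\lambda\in R$. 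Together with the interior cases you listed, this covers all $r\geq 0$, and the corollary follows immediately.

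The detour through Lemmas~\ref{5}--\ref{7}, $Ch$-matrices, and block-swapping conjugations is therefore unnecessary; moreover, as written that argument is circular in places (it invokes facts about $\E_2(R)\perp I_{2n-2}\subset H$ that the paper only proves \emph{using} this corollary). Drop the second and third paragraphs and the proof is clean and matches the paper's one-line argument.
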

\textbf{Proof}: The relations in Lemma \ref{commutator} show that
the ``smaller'' sized matrices are in the required subgroup. \hfill
$\square$

\medskip

A matrix of type $(\delta_1 \perp \ldots \perp \delta_n)$, as in
Identity (D)  can be written as a product of type
$(\delta_1 \perp I_{2n -2}) H$.

Hence, Identity (E) follows from Identity (D). \hfill $\square$

\medskip

L.N. Vaser{\v{s}}te{\u\i}n showed in \cite{SV} that the first row of an elementry
matrix of even size is the first row of an elementary symplectic matrix, i.e.
$e_{2n}{\E}_{2n}(R) = e_{2n}{\rm ESp}_{2n}(R)$. In view of this,
as a consequence of the initial Structure theorem we get:

\begin{cor} \label{haction}
Let $R$ be a commutative ring with $1$.
Then for elementary matrix $\varepsilon $, we have 
$e_{2n}\varepsilon = e_{2n} \alpha$, for some elementary symplectic
$\alpha$ in the subgroup $H$ of {\rm ESp}$_{2n}(R)$. 
\end{cor}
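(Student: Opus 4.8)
The plan is to chain together the two facts being cited, using the Initial Structure Theorem as the bridge. First I would recall Vaser{\v{s}}te{\u\i}n's result: for any $\varepsilon \in {\rm E}_{2n}(R)$ there is some $\beta \in {\rm ESp}_{2n}(R)$ with $e_{2n}\varepsilon = e_{2n}\beta$. So it suffices to prove the assertion for an arbitrary $\beta \in {\rm ESp}_{2n}(R)$ in place of $\varepsilon$; that is, to show $e_{2n}\beta \in e_{2n}H$. By Identity (E) of Theorem \ref{thm}, we may write $\beta = {}^{\delta}\eta$ for some $\delta \in {\rm E}_2(R)$ and some $\eta \in H$ (a product of $\E(A),\E(B),\E(C),\E(D)$ factors). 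Thus $e_{2n}\beta = e_{2n}(\delta \perp I_{2n-2})\eta$.

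The key observation is that $e_{2n}(\delta \perp I_{2n-2}) = e_{2n}$: since $n > 1$, the last standard basis row $e_{2n}$ has its nonzero entry in a coordinate outside the top $2\times 2$ block, so left multiplication by $\delta \perp I_{2n-2}$ (equivalently, $e_{2n}$ acting on columns) leaves $e_{2n}$ unchanged. Hence $e_{2n}\beta = e_{2n}\eta$ with $\eta \in H$. Setting $\alpha = \eta$ gives $e_{2n}\varepsilon = e_{2n}\beta = e_{2n}\alpha$ with $\alpha \in H \subseteq {\rm ESp}_{2n}(R)$, which is exactly the claim.

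The only point needing a little care is the boundary case. If $n = 1$ then $H$ is by definition a subgroup of ${\rm ESp}_2(R)$ and the top $2\times 2$ block \emph{is} everything, so the argument above breaks; but in fact for $n = 1$ one has ${\rm ESp}_2(R) = {\rm E}_2(R)$ and $H = {\rm ESp}_2(R)$ by a direct check with the generators $\E(A),\dots,\E(D)$ (each is a single $2\times 2$ elementary-symplectic block when $n=1$), so the conclusion $e_2\varepsilon = e_2\alpha$ holds trivially with $\alpha = \varepsilon$ regarded as an element of $H$. I would dispose of this case in one sentence and then run the main argument for $n \geq 2$.

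I do not expect any genuine obstacle here: the corollary is a formal consequence of Theorem \ref{thm} together with Vaser{\v{s}}te{\u\i}n's row lemma, and the content is entirely in those two inputs. The mild subtlety — and the only place a referee might object — is the claim that the leading $\E_2(R)$-factor in Identity (E) can be absorbed because it does not touch the coordinate in which $e_{2n}$ is supported; this is immediate from the block-diagonal shape $\delta \perp I_{2n-2}$ once one notes $n>1$, so it is worth stating explicitly but requires no computation.
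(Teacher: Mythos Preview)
Your argument is correct and is essentially the same as the paper's: invoke Vaser{\v{s}}te{\u\i}n's lemma to replace $\varepsilon$ by an element $\varepsilon_\psi\in{\rm ESp}_{2n}(R)$, use Identity~(E) of the Initial Structure Theorem to write $\varepsilon_\psi={}^{\delta}h$ with $\delta\in{\rm E}_2(R)$ and $h\in H$, and then observe that the top-left block $\delta\perp I_{2n-2}$ fixes $e_{2n}$. The paper also cites Corollary~\ref{smaller} in passing, but Identity~(E) already delivers a product of $\E(A),\E(B),\E(C),\E(D)$ factors, so your direct appeal to it is fine; your $n=1$ digression is unnecessary (and not quite well-posed) since the generators $\E(A),\ldots,\E(D)$ and hence $H$ are only defined for $n>1$ in this paper.
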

\begin{proof} In view of Corollary \ref{smaller} the initial Structure
Theorem asserts that if $\varepsilon_\psi \in$ ESp$_{2n}(R)$ then
$\varepsilon_\psi = ^\delta h$, for some $\delta \in {\E}_2(R)$, $h
\in H$. By Vaser{\v{s}}te{\u\i}n's lemma (\cite{SV}, Lemma 5.6)
$e_{2n}\varepsilon = e_{2n}\varepsilon_\psi$, for some
$\varepsilon_\psi \in {\rm ESp}_{2n}(R)$. Hence $e_{2n}\varepsilon
= e_{2n}(^\delta h) = e_{2n}\alpha$, for some $\alpha \in H$,
as required. \end{proof}

\begin{lemma}\label{sl2reln}
 Let $ \delta=\left(\begin{array}{lr} p & q \\
                                   r & s
             \end{array}\right) $
 with $ \det\delta=1 $. Let  $i \in \{2,3,\dots ,n\} $.
 Then we have the following identities:
\[
{}^{\delta}E(A_i)(x)^{\delta^{-1}}
        = \E^i\begin{pmatrix} px & px \\ rx & rx\end{pmatrix}
          \E^i\begin{pmatrix} qx & qx\\ sx & sx\end{pmatrix}\{I_{2i-2} \perp \{I_2 + C(-x^2)\} \perp I_{2n-2i}\}.
\]
\[
{}^{\delta}\E(B_i)(x)^{\delta^{-1}}
        = \E^i\begin{pmatrix} px & -px \\ rx & -rx\end{pmatrix}
          \E^i\begin{pmatrix} qx & -qx\\ sx & -sx\end{pmatrix}\{I_{2i-2} \perp \{I_2 + B(x^2)\} \perp I_{2n-2i}\}.
\]
\[
{}^{\delta}\E(C_i)(x)^{\delta^{-1}}
        = \E^i\begin{pmatrix} px & px \\ rx & rx\end{pmatrix}
          \E^i\begin{pmatrix} -qx & -qx\\ -sx & -sx\end{pmatrix}\{I_{2i-2} \perp \{I_2 + C(x^2)\} \perp I_{2n-2i}\}.
\]
\[
{}^{\delta}\E(D_i)(x)^{\delta^{-1}}
        = \E^i\begin{pmatrix} -px & px \\ -rx & rx\end{pmatrix}
          \E^i\begin{pmatrix} qx & -qx\\ sx & -sx\end{pmatrix}\{I_{2i-2} \perp \{I_2 + B(-x^2)\} \perp I_{2n-2i}\}.
\]
\hfill $\square$
\end{lemma}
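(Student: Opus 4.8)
The plan is to reduce each of the four identities to one application of Lemma \ref{7} together with a short block computation. Recall first that, by the Notation, $\E(A_i)(x)$, $\E(B_i)(x)$, $\E(C_i)(x)$, $\E(D_i)(x)$ are precisely the matrices $\E^i(Z)$ with $Z$ equal to $A(x)$, $B(x)$, $C(x)$, $D(x)$; in each case $Z$ is a block of determinant $0$ and can be written as $\begin{pmatrix}\lambda x'&\lambda y'\\\mu x'&\mu y'\end{pmatrix}$ for a suitable choice of $(\lambda,\mu)$ and $(x',y')$ (e.g. $(\lambda,\mu)=(1,1)$, $(x',y')=(x,x)$ for type $A$; $(\lambda,\mu)=(1,-1)$, $(x',y')=(x,x)$ for type $C$; $(\lambda,\mu)=(-1,1)$, $(x',y')=(x,-x)$ for type $D$). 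By Lemma \ref{7}, conjugation by $\delta\perp I_{2n-2}$ sends $\E^i(Z)$ to $\E^i(Z')$, where $Z'$ is obtained from $Z$ by replacing the column $(\lambda,\mu)^t$ by $\delta(\lambda,\mu)^t$. Carrying this out gives, for type $A$, that the left-hand side equals $\E^i\begin{pmatrix}(p+q)x&(p+q)x\\(r+s)x&(r+s)x\end{pmatrix}$, and similarly the analogous block with $(p-q)$, $(r-s)$, etc., in the other three cases. It therefore suffices to show the right-hand side of each identity equals this same $\E^i(Z')$.

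For the right-hand side I would expand the product $\E^i(X)\E^i(Y)$ of the first two factors by block multiplication. Since the nontrivial blocks of $\E^i(X)$, $\E^i(Y)$ sit in positions $(1,i)$ and $(i,1)$, all the higher-order cross terms vanish and one finds that $\E^i(X)\E^i(Y)$ has $(1,i)$-block $X+Y$, $(i,1)$-block $\psi_1(X+Y)^t\psi_1$, $(1,1)$-block $I_2+X\psi_1 Y^t\psi_1$, $(i,i)$-block $I_2+\psi_1 X^t\psi_1 Y$, and is otherwise equal to the identity. For the specific rank-one blocks $X$, $Y$ occurring in the statement, the $(1,1)$-correction $X\psi_1Y^t\psi_1$ is $0$ (it is a product of matrices of the types listed in Lemma \ref{abcd} whose product is zero), while the $(i,i)$-correction $\psi_1X^t\psi_1Y$ simplifies — using $ps-qr=\det\delta=1$ — to exactly $C(x^2)$ for type $A$, $B(-x^2)$ for type $B$, $C(-x^2)$ for type $C$, and $B(x^2)$ for type $D$. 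Multiplying on the right by the third factor $\{I_{2i-2}\perp(I_2+C(-x^2))\perp I_{2n-2i}\}$ (resp. with $B(x^2)$, $C(x^2)$, $B(-x^2)$) and invoking $C(a)+C(b)=C(a+b)$, $C(a)C(b)=0$ and their $B$-analogues from Lemma \ref{abcd}, the $(i,i)$-block collapses to $I_2$; and since $(X+Y)C(-x^2)=0$, resp. $(X+Y)B(\pm x^2)=0$ (again Lemma \ref{abcd}, as $X+Y$ is of the appropriate shape), the $(1,i)$-block is left unchanged. Hence the right-hand side equals $\E^i(X+Y)=\E^i(Z')$, matching the left-hand side.

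The four cases run in parallel and reduce to formal block algebra; the only point needing care is the sign bookkeeping — tracking which of $\pm x^2$ appears in the $(i,i)$-correction so that it cancels the third factor exactly, and confirming in each case that the $(1,1)$-correction and the residual $(1,i)$-term genuinely vanish through the multiplication table of Lemma \ref{abcd}. I expect this to be the only real (and mild) obstacle. A concise write-up can thus just record the four computations of $\psi_1X^t\psi_1Y$ and observe that the rest is immediate from Lemmas \ref{7} and \ref{abcd}.
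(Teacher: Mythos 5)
Your proposal is correct: the paper states Lemma \ref{sl2reln} without proof (it is left as a direct verification), and your argument is exactly that verification, organized cleanly by computing the left side via Lemma \ref{7} and the right side by block multiplication; I checked the four cases and the corrections $X\psi_1 Y^t\psi_1=0$ and $\psi_1X^t\psi_1Y=C(x^2),\,B(-x^2),\,C(-x^2),\,B(x^2)$ all come out as you claim. The only cosmetic quibble is that $\psi_1Y^t\psi_1$ is not literally one of the shapes $A,B,C,D$ of Lemma \ref{abcd} when $q\neq s$, so the vanishing of the $(1,1)$-correction should be justified by the (trivial) direct computation rather than by citing that multiplication table.
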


\begin{cor}\label{sl2normH}  Let $\delta \in {\rm SL}_2(R)$, $\varepsilon \in
H(R)$, the subgroup of {\rm ESp}$_{2n}(R)$. 
Then $^\delta \varepsilon ^{\delta^{-1}} \in H$.
\end{cor}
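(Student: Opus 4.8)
The plan is to reduce the statement to the generators of $H$ and then invoke Lemma \ref{sl2reln} together with Corollary \ref{smaller}. First I would observe that since conjugation by $\delta \perp I_{2n-2}$ is a group automorphism of $\mathrm{Sp}_{2n}(R)$, it suffices to check that $^{\delta}g\,^{\delta^{-1}} \in H$ for each generator $g$ of $H$, i.e. for $g$ of type $\E(A_i)$, $\E(B_i)$, $\E(C_i)$, $\E(D_i)$ with $i \in \{2,\ldots,n\}$; the general element is a product of such generators, and $^{\delta}(g_1 g_2)^{\delta^{-1}} = (^{\delta}g_1\,^{\delta^{-1}})(^{\delta}g_2\,^{\delta^{-1}})$, so closure under conjugation for generators gives it for all of $H$.

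Next, for a single generator I would apply Lemma \ref{sl2reln} directly: for $\delta = \begin{pmatrix} p & q \\ r & s\end{pmatrix} \in \mathrm{SL}_2(R)$, that lemma writes $^{\delta}\E(A_i)(x)^{\delta^{-1}}$, and similarly for the $B$, $C$, $D$ types, as a product of two matrices of type $\E^i\begin{pmatrix}\lambda a & \lambda a\\ \mu a & \mu a\end{pmatrix}$ or the analogous $B$, $C$, $D$ shapes (each with the degenerate $2\times 2$ blocks, hence each a legitimate basic elementary symplectic matrix of one of the four types), multiplied by a factor of the form $\{I_{2i-2} \perp \{I_2 + Y\} \perp I_{2n-2i}\}$ with $Y = B$ or $C$. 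The first two factors lie in $H$ by definition. The third factor lies in $H$ by Corollary \ref{smaller}, which says precisely that every ``suitably placed'' $I_2 + B$ or $I_2 + C$ block belongs to the subgroup generated by $\E(A),\E(B),\E(C),\E(D)$, namely $H$. Hence the product is in $H$, as required.

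One point that requires a word of care, and which I expect to be the only real obstacle, is the shape of the $\E^i\begin{pmatrix} px & px\\ rx & rx\end{pmatrix}$-type factors appearing in Lemma \ref{sl2reln}: these are written with an explicit scalar pair $(p,r)$ or $(q,s)$ coming from $\delta$, so they are $\E(A)$-shaped only up to the row scaling built into the definition of $\E\begin{pmatrix}\lambda a & \lambda a\\ \mu a & \mu a\end{pmatrix}$. I would note that by the Notation preceding Lemma \ref{1} such a matrix is exactly of type $\E(A(\,\cdot\,))$ (resp. $\E(B)$, $\E(C)$, $\E(D)$) in the sense used to define $H$ — the defining generators of $H$ already allow arbitrary $\lambda,\mu$ in the row-scaling, so no extra argument is needed. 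With that remark in place the proof is a one-line assembly: expand each generator via Lemma \ref{sl2reln}, note each factor is either a defining generator of $H$ or a smaller block covered by Corollary \ref{smaller}, and conclude. \hfill $\square$
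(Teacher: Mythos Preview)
Your overall strategy---reduce to generators, apply Lemma~\ref{sl2reln}, then use Corollary~\ref{smaller} for the block factors---is exactly the paper's route. However, the paragraph you flag as ``a word of care'' contains a genuine gap, and your dismissal of it is incorrect.

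The generators of $H$ are \emph{not} the general matrices $\E\begin{pmatrix}\lambda a & \lambda a\\ \mu a & \mu a\end{pmatrix}$ with arbitrary $\lambda,\mu$. By the definition of $H$, the type-$\E(A)$ generator is the specific matrix $\E\begin{pmatrix}a & a\\ a & a\end{pmatrix}$ (all four entries equal), and similarly for $\E(B),\E(C),\E(D)$. So the factors $\E^i\begin{pmatrix}px & px\\ rx & rx\end{pmatrix}$ produced by Lemma~\ref{sl2reln} are \emph{not} generators of $H$ unless $p=r$. Your claim that ``the defining generators of $H$ already allow arbitrary $\lambda,\mu$'' is a misreading of the definition.

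This is precisely why the paper's proof also cites Equations~(2)--(3) (Lemma~\ref{6}): Equation~(2) rewrites $\E\begin{pmatrix}\lambda a & \lambda a\\ \mu a & \mu a\end{pmatrix}$ as a product $\E(A)\E(C)_{I_2+C}$ of a genuine $\E(A)$-generator, a genuine $\E(C)$-generator, and a suitably placed $I_2+C$ block; Equation~(3) does the analogous job for $\E\begin{pmatrix}\lambda b & -\lambda b\\ \mu b & -\mu b\end{pmatrix}$. Only after this further decomposition can Corollary~\ref{smaller} finish the argument. Insert this step and your proof is complete and matches the paper's.
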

\begin{proof} This is clear from Lemma \ref{sl2reln}, Equations (2)-(3),
and Corollary \ref{smaller}. \end{proof}

\begin{cor} Let $\delta \in {\rm SL}_2(R)$, $\varepsilon_\psi \in {\rm ESp}_{2n}(R)$.
Then $^\delta \varepsilon_\psi^{\delta^{-1}} = ^{\varepsilon}\alpha$, for some
$\alpha \in H$, $\varepsilon \in {\E}_2(R)$.
\end{cor}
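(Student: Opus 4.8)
The plan is to read the result off from Identity~(E) of the Initial Structure Theorem (Theorem~\ref{thm}) together with Corollary~\ref{sl2normH}, once one records the auxiliary fact that $\E_2(R)\perp I_{2n-2}$ already lies in $H$. With that in hand the argument is short, and it in fact gives ${}^{\delta}\varepsilon_\psi^{\delta^{-1}}\in H$, so $\varepsilon$ may be taken to be $I_2$.

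\emph{Step 1: $\E_2(R)\perp I_{2n-2}\subseteq H$.} By Corollary~\ref{smaller} the matrices $(I_2+B(b))\perp I_{2n-2}$ and $(I_2+C(c))\perp I_{2n-2}$ lie in $H$ for every $b,c\in R$. Inside ${\rm SL}_2(R)$ these two one-parameter families are the unipotent subgroups with axes $R(1,1)^{t}$ and $R(1,-1)^{t}$, which are transverse because $2$ is invertible in $R$. Conjugation by $P=\begin{pmatrix}1&1\\ 1&-1\end{pmatrix}$ (of determinant $-2$, a unit, with $P^{2}=2I_2$ and $P^{-1}=\tfrac12 P$) carries these families to the standard ones: one checks $P\E_{12}(t)P^{-1}=I_2+B(t/2)$ and $P\E_{21}(t)P^{-1}=I_2+C(t/2)$, while conversely $I_2+B(b)=\E_{21}(1)\E_{12}(-b)\E_{21}(-1)$ and $I_2+C(c)=\E_{12}(-1)\E_{21}(-c)\E_{12}(1)$ both lie in $\E_2(R)$. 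Hence the subgroup of ${\rm SL}_2(R)$ generated by the two families equals $P\,\E_2(R)\,P^{-1}$, which in turn equals $\E_2(R)$ (the displayed formulas give $P\,\E_2(R)\,P^{-1}\subseteq\E_2(R)$, and conjugating back by $P^{-1}=\tfrac12 P$ yields the reverse inclusion). Viewing all of this in the first $2\times 2$ block of ${\rm SL}_{2n}(R)$, every element of $\E_2(R)\perp I_{2n-2}$ is a product of matrices $(I_2+B(b))\perp I_{2n-2}$ and $(I_2+C(c))\perp I_{2n-2}$, hence lies in $H$.

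\emph{Step 2: the conjugation.} Let $\delta\in{\rm SL}_2(R)$ and $\varepsilon_\psi\in{\rm ESp}_{2n}(R)$. By Identity~(E) (as in the proof of Corollary~\ref{haction}) write $\varepsilon_\psi={}^{\delta_0}h_0$ with $\delta_0\in\E_2(R)$, $h_0\in H$. A routine rearrangement in the $\perp I_{2n-2}$ notation gives
\[
{}^{\delta}\varepsilon_\psi^{\delta^{-1}}=\bigl((\delta\delta_0\delta^{-1})\perp I_{2n-2}\bigr)\bigl({}^{\delta}h_0^{\delta^{-1}}\bigr).
\]
By Corollary~\ref{sl2normH} (with $\delta\in{\rm SL}_2(R)$, $h_0\in H$) the second factor lies in $H$. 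For the first factor, $(\delta\delta_0\delta^{-1})\perp I_{2n-2}={}^{\delta}(\delta_0\perp I_{2n-2})^{\delta^{-1}}$, and $\delta_0\perp I_{2n-2}\in\E_2(R)\perp I_{2n-2}\subseteq H$ by Step~1, so Corollary~\ref{sl2normH} again puts it in $H$. Therefore ${}^{\delta}\varepsilon_\psi^{\delta^{-1}}$ is a product of two elements of $H$; taking $\varepsilon=I_2\in\E_2(R)$ and $\alpha={}^{\delta}\varepsilon_\psi^{\delta^{-1}}\in H$ completes the proof.

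\emph{The main obstacle.} Everything outside Step~1 is formal. The role of Step~1 is to get around the fact that $\E_2(R)$ is in general not normal in ${\rm SL}_2(R)$, so that one cannot simply conjugate an elementary matrix through $\delta$ and remain elementary; the invertibility of $2$ is precisely what forces the group generated by the two ``small'' unipotent families of Corollary~\ref{smaller} to be exactly $\E_2(R)$ and not a larger non-elementary conjugate. Without this, one would be left with an irreducible ${\rm SL}_2(R)$ factor and could prove only the weaker form of the statement, with $\varepsilon\in\E_2(R)$ a genuine error term.
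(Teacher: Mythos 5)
Your proof is correct, and it in fact delivers more than the statement asks for: you obtain $^{\delta}\varepsilon_\psi^{\delta^{-1}}\in H$ outright, so $\varepsilon$ can be taken to be $I_2$. The route differs from the paper's in two respects. First, the paper's proof of this corollary re-runs the decomposition machinery after conjugating — it cites Lemma~\ref{4}, Equations (1)--(3), Corollary~\ref{smaller} and Lemma~\ref{sl2reln} directly, tracking how each generator transforms under $\delta$ — whereas you conjugate the already-assembled Identity (E) and then invoke Corollary~\ref{sl2normH} once for each of the two resulting factors; this is cleaner, logically sound, and not circular (Identity (E) and Corollary~\ref{sl2normH} are established before this corollary and do not depend on it). Second, your Step 1 is precisely the paper's Corollary~\ref{e2subH}, namely ${\E}_2(R)\perp I_{2n-2}\subseteq H$, which the paper only states and proves in Section 3: there it follows in one line from $\E_{12}(-1)(I_2+B(c))\E_{12}(1)=\E_{21}(c)$ together with Corollary~\ref{sl2normH}, while you prove it by identifying the subgroup generated by the families $I_2+B(\ast)$, $I_2+C(\ast)$ with $P\,\E_2(R)\,P^{-1}=\E_2(R)$ for $P=\bigl(\begin{smallmatrix}1&1\\1&-1\end{smallmatrix}\bigr)$ — the identities $I_2+B(b)=\E_{21}(1)\E_{12}(-b)\E_{21}(-1)$, $I_2+C(c)=\E_{12}(-1)\E_{21}(-c)\E_{12}(1)$ and $P\E_{12}(t)P^{-1}=I_2+B(t/2)$, $P\E_{21}(t)P^{-1}=I_2+C(t/2)$ all check out, so this is valid, just longer than necessary. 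Because the paper defers ${\E}_2(R)\perp I_{2n-2}\subseteq H$ to Section 3, it can only assert the corollary with the $\E_2(R)$ ``error term'' at this point in the exposition; your reordering shows that the error term is superfluous, which is consistent with, and in effect anticipates, Theorem~\ref{Main Theorem}.
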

\begin{proof} This follows from Lemma \ref{4}, Equations (1)-(3), Corollary
\ref{smaller}, and Lemma \ref{sl2reln}.
\end{proof}

\section{The final Structure Theorem}
Our final Structure Theorem is to assert that ESp$_{2n}(R)$, $n
\geq 2$, is generated by elements of the
type $\E(A)$, $\E(B)$, $\E(C)$, $\E(D)$.

Before we come to the final Structure Theorem we make a simple observation:

\begin{cor} \label{e2subH}
The subgroup ${\E}_2(R) \perp I_{2n - 2}$ is contained in $H$.
\end{cor}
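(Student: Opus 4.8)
The plan is to reduce the statement to a fact about $2\times 2$ matrices. First I would apply Corollary~\ref{smaller} in its boundary case $r=0$: this already places the matrices $(I_2+C(t))\perp I_{2n-2}$ and $(I_2+B(t))\perp I_{2n-2}$ in $H$ for every $t\in R$. (There is no restriction on $t$: since $2$, hence $4$, is a unit, the parameters $\pm 4xy$ appearing in the relevant commutators of Lemma~\ref{commutator} already run over all of $R$.) Writing $\gamma(t):=I_2+C(t)$ and $\beta(t):=I_2+B(t)$ --- the $2\times 2$ matrices $\left(\begin{smallmatrix}1+t & t\\ -t & 1-t\end{smallmatrix}\right)$ and $\left(\begin{smallmatrix}1+t & -t\\ t & 1-t\end{smallmatrix}\right)$, which are transposes of one another --- and letting $\Gamma\subseteq{\rm SL}_2(R)$ be the subgroup they generate, we get $\Gamma\perp I_{2n-2}\subseteq H$. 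So it is enough to show $\E_2(R)\subseteq\Gamma$, i.e.\ that $\E_{12}(t)$ and $\E_{21}(t)$ lie in $\Gamma$ for every $t\in R$.

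For that I would exhibit both of these as short explicit words in the $\gamma$'s and $\beta$'s. A direct $2\times 2$ matrix multiplication gives $\beta(-\tfrac12)\,\gamma(t)\,\beta(\tfrac12)=\E_{21}(-4t)$; since $-4\in R^{\times}$ this yields $\E_{21}(R)\subseteq\Gamma$, in particular $\E_{21}(1)\in\Gamma$. A second direct multiplication gives $\E_{21}(1)\,\gamma(t)\,\E_{21}(-1)=\E_{12}(t)$, and since $\E_{21}(1)$ is now known to be in $\Gamma$ this gives $\E_{12}(R)\subseteq\Gamma$. Hence $\Gamma$ contains $\langle\E_{12}(R),\E_{21}(R)\rangle=\E_2(R)$, and therefore $\E_2(R)\perp I_{2n-2}\subseteq\Gamma\perp I_{2n-2}\subseteq H$, which is the claim. (The transpose symmetry $\beta=\gamma^{\mathrm T}$, together with the transpose of the second identity, can be used to avoid carrying out one of the two verifications.)

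The substance of the argument is the discovery of those two identities, and that is the step where one might stall: a naive attempt to write $\E_{12}(t)$ as a product $\gamma(a)\beta(b)\gamma(c)$ forces rigid numerical values of $a,b,c$ and produces only $\E_{12}$ of a fixed scalar, which is not enough over a general ring. The fix is to let $\gamma$ carry the free parameter and to conjugate it by a \emph{fixed} matrix ($\beta(\pm\tfrac12)$ or $\E_{21}(\pm1)$), the availability of $1/2$ being precisely what turns the output into a one-parameter family. The remaining ingredients --- the reduction to ${\rm SL}_2(R)$, the passage between $2\times 2$ identities and identities in ${\rm ESp}_{2n}(R)$ via $\delta\mapsto\delta\perp I_{2n-2}$, and the generation fact $\E_2(R)=\langle\E_{12}(R),\E_{21}(R)\rangle$ --- are routine.
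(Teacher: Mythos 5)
Your proof is correct --- I checked both of your $2\times 2$ identities, $\beta(-\tfrac12)\gamma(t)\beta(\tfrac12)=\E_{21}(-4t)$ and $\E_{21}(1)\gamma(t)\E_{21}(-1)=\E_{12}(t)$, and they hold --- but it follows a genuinely different route from the paper's. The paper also starts from Corollary~\ref{smaller} and also obtains $\E_{21}(c)$ as a conjugate of a $B$-type unipotent, namely via $\E_{12}(-1)\,(I_2+B(c))\,\E_{12}(1)=\E_{21}(c)$ (the transpose of your second identity, up to relabelling $B\leftrightarrow C$); but since the conjugating element $\E_{12}(1)$ is not yet known to lie in $H$ at that point, the paper must invoke Corollary~\ref{sl2normH} (normality of $H$ under conjugation by ${\rm SL}_2(R)\perp I_{2n-2}$), which in turn rests on Lemma~\ref{sl2reln}. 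You avoid that machinery entirely: your conjugator $\beta(\pm\tfrac12)$ is itself one of the generators already placed in $H$, so everything stays inside the subgroup $\Gamma$ generated by the $B$- and $C$-type unipotents and you only need closure under products. The price is the extra observation that $1/2\in R$ lets you specialize $\beta$ at $\pm\tfrac12$ (which is legitimate under the paper's standing convention that $2$ is invertible --- the same convention the paper needs elsewhere anyway); the payoff is a shorter logical chain that makes Corollary~\ref{e2subH} independent of Lemma~\ref{sl2reln} and Corollary~\ref{sl2normH}. Both arguments are valid and neither is circular.
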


\begin{proof} Let $\gamma = I_2 + B(c)$.
Then $\E_{12}(-1)\gamma \E_{12}(1)  = E_{21}(c)$. By Corollary \ref{sl2normH}
$\;\E_{12}(-1)\gamma \E_{12}(1) \in H$, hence  $\E_{21}(c) \in H$. Similarly, one
can show that  $\E_{12}(c) \in H$.
\end{proof}

We now come to the {\textbf{main Structure Theorem for the elementary symplectic
group}} of size atleast four:

\begin{theorem} \label{Main Theorem}
 For $n \geq 2$, {\rm ESp}$_{2n}(R)$ coincides with the subgroup
$H$.
\end{theorem}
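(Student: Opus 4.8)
The plan is to show the two inclusions $H \subseteq {\rm ESp}_{2n}(R)$ and ${\rm ESp}_{2n}(R) \subseteq H$. The first is immediate, since each of $\E(A)$, $\E(B)$, $\E(C)$, $\E(D)$ is by construction a symplectic matrix of the form $\E(X)$ with $\det(X) = 0$, and such matrices lie in ${\rm ESp}_{2n}(R)$ by Lemma \ref{2} (the relevant $(\lambda,\mu)$ being $(1,1)$, $(1,1)$, $(1,-1)$, $(1,-1)$ respectively, each $\E_2(R)$-equivalent to $(1,0)$). So the content is the reverse inclusion.

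For ${\rm ESp}_{2n}(R) \subseteq H$, I would start from the Initial Structure Theorem (Theorem \ref{thm}), specifically from Identity (E): any $\varepsilon_\psi \in {\rm ESp}_{2n}(R)$ can be written as ${}^\delta w$, where $\delta \in \E_2(R)$ and $w$ is a word in the generators $\E(A)$, $\E(B)$, $\E(C)$, $\E(D)$ — hence $w \in H$. Thus it suffices to show that ${}^\delta h \in H$ for every $\delta \in \E_2(R)$ and $h \in H$. Now $\delta \in \E_2(R) \subseteq {\rm SL}_2(R)$, so Corollary \ref{sl2normH} applies directly: $H$ is normalized by ${}^{\delta}(-)^{\delta^{-1}}$ for $\delta \in {\rm SL}_2(R)$. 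Therefore ${}^\delta h\, \delta^{-1} \in H$. But by Corollary \ref{e2subH}, the subgroup $\E_2(R) \perp I_{2n-2}$ is itself contained in $H$, so $(\delta^{-1} \perp I_{2n-2}) \in H$; multiplying, ${}^\delta h = ({}^\delta h\, \delta^{-1})(\delta \perp I_{2n-2}) \in H$. Hence $\varepsilon_\psi \in H$, completing the inclusion.

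Essentially all the real work has been front-loaded into the lemmas of Section 2: Lemma \ref{commutator} and Corollary \ref{smaller} (which put the ``smaller'' matrices $I_{2r}\perp(I_2+B \text{ or } C)\perp I_{2s}$ into $H$), Lemma \ref{sl2reln} and Corollary \ref{sl2normH} (normality of $H$ under ${\rm SL}_2$-conjugation), and Corollary \ref{e2subH} ($\E_2(R)\perp I_{2n-2}\subseteq H$). Given these, the proof of the Main Theorem is a short assembly. The one point that deserves a line of care is making sure the Initial Structure Theorem is being invoked in its cleanest form: the decomposition $\varepsilon_\psi = {}^\delta\E(A)\E(C)\E(B)\E(D)\cdots\E(A)\E(C)\E(B)\E(D)$ of Identity (E) genuinely has all the ``correction'' matrices $I_2+B$, $I_2+C$ and the block $\delta_2\perp\cdots\perp\delta_n$ already absorbed — which is legitimate precisely because of Corollary \ref{smaller} and Corollary \ref{sl2normH}, used to rewrite those corrections in terms of the four basic generators. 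So the anticipated ``main obstacle'' is not in this proof at all but in verifying the commutator table of Lemma \ref{commutator}; here we simply cite it.
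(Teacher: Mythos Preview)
Your proof is correct and follows essentially the same route as the paper: invoke Identity (E) of the Initial Structure Theorem to write $\varepsilon_\psi = {}^{\delta}h$ with $\delta \in \E_2(R)$ and $h \in H$, then use Corollary \ref{e2subH} to absorb the $\delta$. The one superfluous step is your appeal to Corollary \ref{sl2normH}: since ${}^{\delta}h$ in the paper's notation means $(\delta \perp I_{2n-2})\,h$ (left multiplication, not conjugation), once Corollary \ref{e2subH} gives $(\delta \perp I_{2n-2}) \in H$ you are already done, with no need to pass through ${}^{\delta}h^{\delta^{-1}}$.
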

\begin{proof} By the initial Structure Theorem Identity (E) it follows that
if $\varepsilon_\psi \in {\rm ESp}_{2n}(R)$ then $\varepsilon_\psi = ^\delta
h$, for some $\delta \in {\E}_2(R)$, $h \in H$.

Therefore, it suffices to show that $(\delta \perp I_{2n - 2}) \in
H$, for $\delta \in {\E}_2(R)$. This was shown above in
Corollary \ref{e2subH}.
\end{proof}

\medskip

\section{\large Local Global Principle for the $A, B, C, D$ subgroup}

In this section we give an alternate proof of, 
${\rm ESp}_{2n}(R)$ is a normal subgroup of ${\rm Sp}_{2n}(R)$,
from that of V.I. Kope{\u\i}ko in \cite{Kop} and G. Taddei in \cite{Taddei}. 
This proof will throw more light on the commutator relations between
 the special generators of type $A$, $B$, $C$, $D$ described above; 
which we feel is useful to record here. 

A sketch of the proof: By Theorem \ref{Main Theorem} we have 
$H={\rm ESp}_{2n}(R)$ for $n\geq 2.$
We prove that $H$ is a normal subgroup of ${\rm Sp}_{2n}(R)$, $n \geq 2$.
Our idea to prove this is to establish that $H(R[X])$ satisfies
the Local Global principle enunciated by D. Quillen in \cite{Q} to
settle the Serre's problem on projective modules over a polynomial
ring. Our treatment to establish this Local Global principle is
influenced by A. Suslin's treatment in \cite{18}, which in turn
was inspired by D. Quillen's approach in \cite{Q}. The treatment of 
V.I. Kope{\u\i}ko in \cite{Kop} is also inspired by \cite{18}; however 
our treatment is via commutator laws (and not the special forms as 
in \cite{18}, \cite{Kop})) and is similar to the treatment in \cite{acr} 
for the relative groups via commutator relations.

\begin{lemma}\label{8}
We record the commutator relations of type  $[E(X_i),
\{I_{2j-2} \perp \{I_2+Y\}  \perp I_{2n-2j} \}]$, with $X = A, B,
C, D $ and  $ Y = B, C $ where  $i \in
\{2,3,\dots ,n\} $ and $ j \in \{1,2,...,n\}.$

\[
\begin{split}
 [ \E(B_i)(x), \{I_{2j-2} \perp \{I_2+B(y)\} \perp I_{2n-2j} \} ]& = I_{2n}, \text{ for all  $ i,j $.}\quad\quad\quad
 \quad\quad\quad\quad\quad \\
 [ \E(C_i)(x), \{I_{2j-2} \perp \{I_2+C(y)\} \perp I_{2n-2j} \} ]& = I_{2n}, \text{ for all  $ i,j $.}
 \end{split}
\]
\begin{eqnarray*}
 \lefteqn{[ \E(A_i)(x), \{I_{2j-2} \perp \{I_2+B(y)\} \perp I_{2n-2j} \}]
 =}\\
&& \quad\quad\quad\quad\quad\quad\quad\quad\quad \begin{cases} \{ \{I_2 +B(4x^2y)\} \perp  I_{2n-2i} \} \E(B_i)(2xy), & \text{ if  $ i = j $; }\\
              I_{2n},  & \text{ if  $ i \neq j $.}
\end{cases}
\end{eqnarray*}

\begin{eqnarray*}
 \lefteqn{[ \E(A_i)(x), \{I_{2j-2} \perp \{I_2+C(y)\} \perp I_{2n-2j}
 \}]=}\\
&& \hspace{-0.5cm}\begin{cases} \{I_{(2i-2)} \perp \{ I_2 +C(4x^2y)\} \perp  I_{2n-2i} \} \E(C_i)(-2xy), & \text{ if  $ j = 1 $; }\\
              I_{2n},  & \text{ for any i,j with   $ j \neq 1 $.}\\
\end{cases}
\end{eqnarray*}

\begin{eqnarray*}
 \lefteqn{[ \E(B_i)(x), \{I_{2j-2} \perp \{I_2+C(y)\} \perp I_{2n-2j} \}] =}\\
&& \quad \begin{cases}
            \{I_{(2i-2)} \perp \{ I_2 +B(-4x^2y)\} \perp  I_{2n-2i} \} \E(D_i)(-2xy), & \text{ if  $ j = 1 $; }\\
            \{\{I_2 +B(-4x^2y)\} \perp  I_{2n-2} \} \E(A_i)(2xy),  & \text{ if  $ i = j $; }\\
              I_{2n},  & \text{ if  $ i \neq j,j\neq1 $.}
\end{cases}
\end{eqnarray*}
\begin{eqnarray*}
 \lefteqn{[ \E(C_i)(x), \{I_{2j-2} \perp \{I_2+B(y)\} \perp I_{2n-2j} \}]=}\\
&& \quad\quad \hspace{-0.5cm}\begin{cases}
              \{I_{(2i-2)} \perp \{ I_2 +C(-4x^2y)\} \perp  I_{2n-2i} \} \E(A_i)(-2xy), & \text{ if  $ j = 1 $; }\\
             \{\{I_2 +C(-4x^2y)\} \perp  I_{2n-2} \} \E(D_i)(2xy),  & \text{ if  $ i = j $; }\\
              I_{2n}, & \text{ if  $ i \neq j,j\neq1 $.}
\end{cases}
\end{eqnarray*}

\begin{eqnarray*}
 \lefteqn{[ \E(D_i)(x), \{I_{2j-2} \perp \{I_2+B(y)\} \perp I_{2n-2j} \}] =}\\
&& \quad\quad\quad\quad \begin{cases}
             \{I_{(2i-2)} \perp \{I_2 +B(4x^2y)\} \perp  I_{2n-2i} \} \E(B_i)(-2xy), & \text{ if  $ j = 1 $; }\\
              I_{2n},  & \text{ if $ j \neq 1 $.}\\
\end{cases}
\end{eqnarray*}

\begin{eqnarray*}
\lefteqn{[ \E(D_i)(x), \{I_{2j-2} \perp \{I_2+C(y)\} \perp I_{2n-2j} \}]=}\\
&& \quad\quad\quad\quad \begin{cases}
              \{I_{(2i-2)} \perp \{I_2 +B(4x^2y)\} \perp  I_{2n-2i} \} \E(C_i)(-2xy), & \text{ if  $ j = 1 $; }\\
              I_{2n},  & \text{ if $  j \neq 1 $.}
\end{cases}
\end{eqnarray*}
\hfill $\square$
\end{lemma}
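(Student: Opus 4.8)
The plan is to treat each of the listed commutator relations as a direct block-matrix computation, organized so that the bulk of the work reduces to the $2\times 2$ arithmetic already tabulated in Lemma \ref{abcd}. First I would fix a pair $\E(X_i)(x)$ and $\{I_{2j-2}\perp\{I_2+Y(y)\}\perp I_{2n-2j}\}$, and observe that by the splitting property (Lemma \ref{3}) each $\E(X_i)(x)$ factors as a ``top'' unipotent $\begin{pmatrix} X \\ 0\end{pmatrix}$ times a ``bottom'' unipotent $\begin{pmatrix} 0 \\ \psi_1 X^t\psi_1\end{pmatrix}$, both of which have only one nontrivial $2\times 2$ block (in the $(1,i)$, resp. $(i,1)$, position when we view the ambient matrix as an element of $\mathrm{Sp}_n(M_2(R))$). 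Likewise $\{I_{2j-2}\perp\{I_2+Y(y)\}\perp I_{2n-2j}\}$ is a block-diagonal matrix with a single nontrivial diagonal block of type $I_2+Y(y)$ in position $j$. So the whole commutator computation takes place inside a $3\times 3$ (or $2\times 2$) block submatrix indexed by $\{1,i\}$ together with $j$, and one simply multiplies out $uvu^{-1}v^{-1}$ using the products $A(x)B(y)=B(2xy)$, etc., from Lemma \ref{abcd}.

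Next I would separate the cases exactly as in the statement: (i) $i\neq j$ and $j\neq 1$, where the nontrivial block of $I_2+Y$ sits in a position disjoint from both the row-block $1$ and the column-block $i$ touched by $\E(X_i)$, so the two matrices commute and the commutator is $I_{2n}$; (ii) $j=1$, where $I_2+Y$ interacts only with the ``bottom'' part $\begin{pmatrix} 0\\ \psi_1 X^t\psi_1\end{pmatrix}$ of $\E(X_i)$ (since that is the piece with a block in column $1$); and (iii) $i=j\neq 1$, where $I_2+Y$ interacts only with the ``top'' part $\begin{pmatrix} X\\ 0\end{pmatrix}$. In cases (ii) and (iii) the commutator is a product of a ``smaller'' matrix $\{I_{2i-2}\perp\{I_2+Y'(4x^2y)\}\perp I_{2n-2i}\}$ coming from the quadratic cross term and one basic generator $\E(Z_i)(\pm 2xy)$ coming from the linear term; here the type $Z$ and the signs are read off from which of the eight products in Lemma \ref{abcd} is nonzero (e.g.\ $B\cdot C=A$ forces $[\E(A_i),I_2+C]$ at $j=1$ to spit out an $\E(C_i)$, and so on). The two relations asserting $I_{2n}$ for \emph{all} $i,j$, namely $[\E(B_i)(x),I_2+B(y)]$ and $[\E(C_i)(x),I_2+C(y)]$, follow because $B(\ast)B(\ast)=0$ and $C(\ast)C(\ast)=0$ kill every cross term, so even when the positions overlap nothing survives.

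I expect the main obstacle to be purely bookkeeping rather than conceptual: keeping the block positions straight when $i=j$ versus $j=1$, correctly propagating the sign $-(-1)^{i+j}$ hidden in the definition of $\S_{ij}$ (equivalently, in $\psi_1 X^t\psi_1$) through each product, and verifying that the quadratic cross terms assemble into exactly $I_2+B(4x^2y)$ or $I_2+C(4x^2y)$ with the stated sign. A clean way to contain this is to do one representative case in full — say $[\E(A_i)(x),\{I_{2j-2}\perp\{I_2+C(y)\}\perp I_{2n-2j}\}]$ with $j=1$ — exhibiting the $3$-block product explicitly, and then remark that the remaining cases are obtained by the same computation after substituting the appropriate entries from Lemma \ref{abcd} and, where needed, conjugating by the permutation that swaps the roles of $A\leftrightarrow D$ and $B\leftrightarrow B$ (the horizontal/vertical symmetry built into the four types). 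Since each individual identity is a finite matrix identity over the polynomial ring $\mathbb{Z}[1/2][x,y]$, once the representative case is checked the rest is a verification, which is why the statement is issued with a $\square$.
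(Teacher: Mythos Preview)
Your proposal is correct and matches the paper's intent: the paper itself offers no proof for this lemma (it is stated and closed with a $\square$, exactly like Lemma~\ref{abcd} and Lemma~\ref{commutator}), so a direct block-by-block verification using the $2\times 2$ multiplication table of Lemma~\ref{abcd} together with the splitting of Lemma~\ref{3} is precisely what is being left to the reader. Your organization into the three positional cases $j\notin\{1,i\}$, $j=1$, $j=i$ is the natural one, and your remark that the vanishing relations $B(\ast)B(\ast)=0$, $C(\ast)C(\ast)=0$ (and more generally whichever products in Lemma~\ref{abcd} are zero) account for the instances where the commutator collapses to $I_{2n}$ even when the positions overlap is exactly the mechanism at work --- this is why, for example, $[\E(A_i)(x),I_2+C(y)]$ is trivial at $i=j$ (since $A\cdot C=0$ and $C\cdot D=0$) while $[\E(A_i)(x),I_2+B(y)]$ is not.
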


\begin{lemma}
We  record the commutator relations of type  $[\, \E(X_i)(x)\,
,\,\E(Y_i)(y)\, ]$, with $ (X,\:Y) \in \{ (A,D),\;(B,C)\} $,  where $i
\in \{2,3,\dots ,n\} $.
\[
\begin{split}
[\: \E(&A_i)(x),\; \E(D_i)(2yz)\:]\\
=&[ \E(A_i)(x),\; \{ \{I_2+C(4y^2z)\} \perp I_{2n-2}\} ]\; \{ \{I_2+C(4y^2z)\} \perp I_{2n-2}\}\\
 &[[\E(A_i)(x),\:\E(C_i)(y)] E(C_i)(y),\;[ \E(A_i)(x),\: \{I_{2i-2} \perp \{I_2+B(z)\} \perp I_{2n-2i} \}]\\
 &\{I_{2i-2} \perp \{I_2+B(z)\} \perp I_{2n-2i} \}]\\
 &[ \{I_{2i-2} \perp \{I_2+B(z)\} \perp I_{2n-2i} \},\;\E(C_i)(y)]\{ \{I_2+C(-4y^2z)\} \perp I_{2n-2}\}.\\
 \end{split}
 \]
 \[
\begin{split}
[\: \E(&B_i)(x),\;\E(C_i)(2yz)\:]\\
=&[ \E(B_i)(x),\;\{ I_{(2i-2)}\perp\{I_2 +C(4y^2z)\}\perp  I_{2n-2i}\}]\\
 &\{ I_{(2i-2)}\perp\{I_2 +C(4y^2z)\}\perp  I_{2n-2i}\}[[\E(B_i)(x),\; \E(A_i)(y)] \E(A_i)(y),\\
 &[ \E(B_i)(x),\;\{I_2+C(-z)\} \perp I_{2n-2} \}]\{\{I_2+C(-z)\} \perp I_{2n-2} \}]\\
 &[\{I_2+C(-z)\} \perp I_{2n-2} \},\;\E(A_i)(y)]\{ I_{(2i-2)}\perp\{I_2 +C(-4y^2z)\}\perp  I_{2n-2i}\}.\\
\end{split}
 \]
\end{lemma}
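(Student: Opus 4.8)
The final statement to prove is the last \textbf{Lemma} in the excerpt, which records two commutator identities for $[\E(A_i)(x),\E(D_i)(2yz)]$ and $[\E(B_i)(x),\E(C_i)(2yz)]$, expressing each as a product of commutators of $\E(A_i)(x)$ (resp. $\E(B_i)(x)$) with smaller, more tractable factors coming from the decompositions in Lemma~\ref{sl2reln} and the commutator tables in Lemma~\ref{commutator} and Lemma~\ref{8}.

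\bigskip

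The plan is to reduce both identities to the standard commutator algebra identities
\[
[g, hk] = [g,h]\,{}^{h}[g,k]\qquad\text{and}\qquad [g,h]^{-1} = {}^{h^{-1}}[h,g],
\]
applied to a factorization of the right-hand factor. First I would observe that by Lemma~\ref{sl2reln}, applied with a suitable $\delta \in \mathrm{SL}_2(R)$ (namely one realizing the passage between the $A$/$D$ forms, i.e.\ essentially $\psi_1$ up to sign), an element of type $\E(D_i)(2yz)$ can be written as a product of the form
\[
\E(D_i)(2yz) = \E(C_i)(y)\cdot\{I_{2i-2}\perp\{I_2+B(z)\}\perp I_{2n-2i}\}\cdot\{I_{2i-2}\perp\{I_2+C(4y^2z)\}\perp I_{2n-2i}\}^{\pm1}
\]
(after absorbing the pure $\E^i(\ast,\ast)$ blocks into $\E(C_i)$- and $\E(D_i)$-type generators via Lemma~\ref{5}, Lemma~\ref{6}, and the relations of Lemma~\ref{abcd}); and similarly $\E(C_i)(2yz)$ factors through $\E(A_i)(y)$, a block $\{I_2+C(-z)\}\perp I_{2n-2}$, and a block $\{I_2+C(4y^2z)\}\perp I_{2n-2}$. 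Having fixed such a factorization $\E(D_i)(2yz) = h_1 h_2 h_3$, I would then expand $[\E(A_i)(x), h_1 h_2 h_3]$ repeatedly by the product rule above, grouping the three factors so that the $\{I_2+C(4y^2z)\}$-block is peeled off first (giving the leading $[\E(A_i)(x),\{\{I_2+C(4y^2z)\}\perp I_{2n-2}\}]\{\{I_2+C(4y^2z)\}\perp I_{2n-2}\}$ on the right-hand side), the middle commutator $[[\E(A_i)(x),\E(C_i)(y)]\E(C_i)(y),\ \ast]$ arising as the conjugate of $[\E(A_i)(x), \{I_2+B(z)\}\text{-block}]$ by $\E(C_i)(y)$, and the trailing factor $[\{I_2+B(z)\}\text{-block},\E(C_i)(y)]\{\{I_2+C(-4y^2z)\}\perp I_{2n-2}\}$ accounting for the commutator of the two remaining factors and the final correction block. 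Each bracket that survives in this bookkeeping is exactly one of the brackets that appears in Lemma~\ref{8} or Lemma~\ref{commutator}, so no new computation is needed once the factorization and the grouping are pinned down.

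\bigskip

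The second identity, for $[\E(B_i)(x),\E(C_i)(2yz)]$, is handled in exactly the same way, using the $\E(C_i)$-factorization from Lemma~\ref{sl2reln} (with $\E(A_i)(y)$ playing the role that $\E(C_i)(y)$ played before, and the block $\{I_2+C(-z)\}\perp I_{2n-2}$ in place of the $\{I_2+B(z)\}$-block), and invoking the $[\E(B_i),\,\cdot\,]$ rows of Lemma~\ref{8} and Lemma~\ref{commutator} for the individual brackets. Both computations take place inside the single copy of $\mathrm{Sp}_4(R)$ sitting in positions $\{1,2,2i-1,2i\}$ (the indices $i$ match on both sides, so nothing disjoint interferes), which is what lets me treat the $\{I_2+B\}$-, $\{I_2+C\}$-blocks and the $\E^i(\ast)$-blocks as $2\times2$ data and use Lemma~\ref{abcd} freely.

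\bigskip

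The main obstacle is organizational rather than conceptual: choosing the factorization of $\E(D_i)(2yz)$ (resp.\ $\E(C_i)(2yz)$) so that the three resulting commutators, after applying $[g,hk]=[g,h]\,{}^h[g,k]$ and the inverse rule, line up \emph{verbatim} with the asserted right-hand sides, including getting every sign and every coefficient ($4y^2z$ versus $-4y^2z$, the $2yz$ inside the bracket, etc.) correct. Concretely, the delicate point is tracking how conjugation by $\E(C_i)(y)$ (resp.\ $\E(A_i)(y)$) turns $[\E(A_i)(x),\{I_2+B(z)\}\text{-block}]$ into the displayed nested commutator $[[\E(A_i)(x),\E(C_i)(y)]\E(C_i)(y),\ [\E(A_i)(x),\{I_2+B(z)\}\text{-block}]\,\{I_2+B(z)\}\text{-block}]$ --- this is just ${}^{g}[a,b]=[{}^{g}a,{}^{g}b]$ together with ${}^{\E(C_i)(y)}\E(A_i)(x) = [\E(A_i)(x),\E(C_i)(y)]^{-1}\E(A_i)(x)$ rewritten via Lemma~\ref{commutator}, but one must carry the $I_2+C$ correction term through it. Once the factorizations are chosen and the two standard commutator identities are applied in the right order, everything else is substitution from the tables already established, so I would present the proof as: (i) state the factorization of the right-hand factor from Lemma~\ref{sl2reln}; (ii) expand by the product and inverse commutator rules; (iii) identify each surviving bracket with an entry of Lemma~\ref{8} / Lemma~\ref{commutator}; (iv) collect the correction blocks, which cancel down to the stated form.
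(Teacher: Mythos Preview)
Your overall strategy --- factor the right-hand argument, then expand the commutator against that factorization --- is the same as the paper's. But the execution has a real gap in the first step, and you miss the single clean identity that makes the second step trivial.

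\textbf{The factorization.} You propose to obtain the factorization of $\E(D_i)(2yz)$ (resp.\ $\E(C_i)(2yz)$) from Lemma~\ref{sl2reln}. That lemma is about conjugating $\E(A_i),\E(B_i),\E(C_i),\E(D_i)$ by an $\mathrm{SL}_2(R)$ element sitting in the first $2\times2$ block; it does not give you the factorization you need. The displayed factorization you write down,
\[
\E(D_i)(2yz)=\E(C_i)(y)\cdot\{I_{2i-2}\perp(I_2+B(z))\perp I_{2n-2i}\}\cdot\{I_{2i-2}\perp(I_2+C(4y^2z))\perp I_{2n-2i}\}^{\pm1},
\]
is incorrect on two counts: the $I_2+C(4y^2z)$ block sits in positions $\{1,2\}$, not in positions $\{2i-1,2i\}$; and the shape is not a simple three-term product but a block times a \emph{commutator}. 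The correct factorization comes straight out of the $[\E(C_i)(y),\{I_{2j-2}\perp(I_2+B(z))\perp I_{2n-2j}\}]$ entry (with $j=i$) of Lemma~\ref{8}, which reads off as
\[
\E(D_i)(2yz)=\{\,(I_2+C(4y^2z))\perp I_{2n-2}\,\}\;\bigl[\,\E(C_i)(y),\ \{I_{2i-2}\perp(I_2+B(z))\perp I_{2n-2i}\}\,\bigr],
\]
and analogously, from the $[\E(A_i)(y),\{(I_2+C(-z))\perp I_{2n-2}\}]$ entry,
\[
\E(C_i)(2yz)=\{\,I_{2i-2}\perp(I_2+C(4y^2z))\perp I_{2n-2i}\,\}\;\bigl[\,\E(A_i)(y),\ \{(I_2+C(-z))\perp I_{2n-2}\}\,\bigr].
\]

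\textbf{The expansion.} Once the right-hand argument is written as $y\,[z,w]$, you do not need iterated $[g,hk]$ plus ad hoc regrouping. The paper applies the single group identity
\[
[\,x,\ y[z,w]\,]=[x,y]\;y\;\bigl[\,[x,z]z,\ [x,w]w\,\bigr]\;[w,z]\;y^{-1},
\]
with $x=\E(A_i)(x)$, $y=(I_2+C(4y^2z))\perp I_{2n-2}$, $z=\E(C_i)(y)$, $w=I_{2i-2}\perp(I_2+B(z))\perp I_{2n-2i}$, and this \emph{is} the stated right-hand side verbatim (note $y^{-1}=(I_2+C(-4y^2z))\perp I_{2n-2}$). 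The second identity is the same substitution with the $(B,C)$ data. So no further ``identification with entries of Lemma~\ref{8} / Lemma~\ref{commutator}'' or cancellation of correction blocks is needed; those lemmas are used only to produce the factorization, not to simplify the output.
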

\begin{proof} By Lemma \ref{8}, we have
\[
\begin{split}
&\E(D_i)(2yz)=\{ \{I_2+C(-4y^2z)\} \perp I_{2n-2}\} ^{-1}[\E(C_i)(y),\; \{I_{2i-2} \perp\\
 & \{I_2+B(z)\} \perp I_{2n-2i} \}]\text{ and }\\
\end{split}
\]
 \[
\begin{split}
&\E(C_i)(2yz)= \{ I_{(2i-2)}\perp\{I_2 +C(-4y^2z)\}\perp
I_{2n-2i}\}^{-1} [ \E(A_i)(y),\; \\
&\{\{I_2+C(-z)\} \perp I_{2n-2} \}].
\end{split}
\]
Now using the commutator formula
\[
  [\: x,\: y[z, w]\: ]= [x,y]\: y\: [\: [x,z]z,\: [x,w]w \:]\: [w,z]\: y^{-1},  
\]
results follows.
\end{proof}
\begin{prop}\label{product decomposition}
Let $ X ,Y , Z \in \{ A,B,C,D\} $. Let $k \in \mathbb{N}
 $ be fixed. Let $s$ be a non nilpotent element of $R$.
 Let $ m> k $. Let $ i,j,r_{t} \in \{2,3,.., n\} $ for every
 $ t \in \mathbb{N} $. Then there exists a product decomposition
in ${\rm ESp}_{2n}(R_{s})$ 
\[ 
\E(X_{i})(a/s^k)\E(Y_{j})(s^mx)\E(X_{i})(a/s^k)^{-1} =
\prod_{t=1}^{\lambda}\E(Z_{r_{t}})(s^{m_{t}}x_{t}),
\]
where $a$ and $x$ are elements of $R$ and the $x_t$ are suitable elements 
in $R$, also $\lambda \leq 45$, with $ m_t \rightarrow \infty $ as $m\rightarrow\infty$.
\end{prop}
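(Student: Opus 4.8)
The plan is to prove Proposition~\ref{product decomposition} by a bounded-length conjugation calculus built entirely on the commutator relations recorded in Lemma~\ref{commutator} and Lemma~\ref{8}. First I would rewrite the conjugation $\E(X_i)(a/s^k)\,\E(Y_j)(s^m x)\,\E(X_i)(a/s^k)^{-1}$ as $\E(Y_j)(s^m x)\cdot [\,\E(X_i)(a/s^k)^{-1},\,\E(Y_j)(s^m x)^{-1}\,]^{-1}$, or more directly as $\E(Y_j)(s^m x)\cdot[\E(X_i)(a/s^k),\E(Y_j)(s^m x)]$ up to reordering; the point is that the problem reduces to expressing a single commutator $[\E(X_i)(a/s^k),\E(Y_j)(s^m x)]$ as a short product of $\E(Z_r)(s^{m_t}x_t)$'s with $m_t\to\infty$. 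Now one consults the case list in Lemma~\ref{commutator}.

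The easy cases are those where the commutator is either $I_{2n}$ (e.g. $X=Y$, or $i\neq j$ with $(X,Y)$ not a ``resonant'' pair) or a single new generator $\E(Z_{r})(\pm 2\,(a/s^k)(s^m x))=\E(Z_r)(\pm 2 s^{m-k}ax)$ sitting in a shifted block position $r=|i-j|+1$; here $\lambda=1$ and the new exponent is $m-k>0$, which tends to $\infty$ with $m$. The genuinely delicate cases are the ``diagonal'' ones $i=j$ where the commutator is of the form $\{I_2+Y'(4(a/s^k)(s^m x))\}\perp I_{2n-2}$ with $Y'\in\{B,C\}$, and the four exceptional $2\times2$-indexed entries $[\E(A_2),\E(D_2)]$, $[\E(B_2),\E(C_2)]$, $[\E(C_2),\E(B_2)]$, $[\E(D_2),\E(A_2)]$ listed explicitly at the end of Lemma~\ref{commutator}: there the commutator is not itself one of our generators. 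For the $\{I_2+Y'(4s^{m-k}ax)\}\perp I_{2n-2}$ terms I would invoke Lemma~\ref{sl2reln} (with a suitable $\delta\in\E_2(R)$ realizing $I_2+Y'$ as $\,{}^{\delta}\E(Y'')(\cdot)^{\delta^{-1}}$ up to explicit $\E(A),\E(B),\E(C),\E(D)$ factors, exactly as in Corollary~\ref{smaller}) together with Lemma~\ref{8} to unwind $I_2+B$, $I_2+C$ blocks into a bounded number of $\E(Z_r)(\cdot)$ factors, each carrying an exponent that is again a positive power of $s$ times an element of $R$ (the exponents in Lemma~\ref{8} are of the form $\pm 4x^2y$ and $\pm2xy$, hence genuinely raise the $s$-power when $x$ is a multiple of $s^{m-k}$ and $m>k$). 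For the four exceptional entries one reads off the displayed $2\times2$ block matrices: every entry there is a sum of terms $A(\cdot),B(\cdot),C(\cdot),D(\cdot)$ with arguments that are monomials in the two arguments, so substituting $x\mapsto a/s^k$, $y\mapsto s^m x$ produces arguments divisible by $s^{m-k}$ or by $s^{2(m-k)}$, and the whole block matrix can be written, via the splitting property (Lemma~\ref{3}) and Lemma~\ref{abcd}, as a bounded product of $\E(Z_r)(s^{m_t}x_t)$'s. Throughout, the bookkeeping of block indices $r_t\in\{2,\dots,n\}$ is handled by the index-shift rules ($r=|i-j|+1$) already encoded in Lemma~\ref{commutator} and Lemma~\ref{8}.

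Finally one has to verify the uniform bound $\lambda\le 45$ and the clause $m_t\to\infty$. The bound is obtained by taking, over all finitely many cases in Lemma~\ref{commutator}, the worst-case count: each ``resonant'' diagonal case costs at most a fixed number of $\E$-factors to resolve the $I_2+Y$ term via Lemma~\ref{sl2reln}/Lemma~\ref{8} (a constant independent of $n$, since all these identities live in the first $\E_2$-block and one neighbouring block), and each of the four exceptional entries, being a $2\times2$ matrix whose four entries are sums of at most three $A/B/C/D$-terms, contributes at most a fixed number of splitting factors; summing these constants gives a value that one checks is $\le 45$. The assertion $m_t\to\infty$ as $m\to\infty$ is immediate once one observes that every argument produced in the process is $s^{m-k}$ (or a higher power) times a fixed polynomial in $a,x$, because the arguments in all the relevant identities (Lemma~\ref{abcd}: $2xy$; Lemma~\ref{commutator}: $\pm2xy,\pm4xy$; Lemma~\ref{8}: $\pm2xy,\pm4x^2y$; the exceptional entries: $\pm4xy^2,\pm4x^2y,\pm8x^2y^2,\pm2xy$) are all monomials of degree $\ge1$ in each of $x$ and $y$, so plugging $y=s^mx$ forces a factor $s^{m-k}$ at least once.

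The main obstacle I anticipate is the honest case analysis for the four exceptional block matrices $[\E(A_2),\E(D_2)]$ etc.: unlike the generic commutators, these are not single generators, and one must carefully decompose each $2\times2$ block — remembering that the $(1,1)$ and $(2,2)$ entries contain an $I_2$ plus several $A/B/C/D$ terms, so the matrix is a product of an $I_2+(\text{stuff})$ piece (to be handled by Lemma~\ref{8}) and off-diagonal $\E(\cdot)$ pieces (to be handled by the splitting property) — while simultaneously keeping the factor count inside the budget of $45$ and confirming that no argument fails to pick up a positive $s$-power. Everything else is a mechanical, if lengthy, bookkeeping exercise using the already-established relations.
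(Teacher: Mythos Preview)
Your overall architecture is right and matches the paper: write the conjugate as $[\E(X_i)(a/s^k),\E(Y_j)(s^m x)]\cdot \E(Y_j)(s^m x)$ and run a finite case analysis over the commutator table in Lemma~\ref{commutator}. Where you diverge from the paper, however, there is a genuine gap.

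First, a small but important point you skip. In Case~2 (pairs like $(A,B),(A,C),\dots$) the paper does not simply read off the commutator with the raw arguments $a/s^k$ and $s^m x$. It uses that each such commutator depends only on the \emph{product} of the two arguments (or on monomials in them of fixed bidegree), and hence may be rewritten as $[\E(X_i)(as^p),\E(Y_j)(s^q x)]$ with $p+q=m-k$. This redistribution is what guarantees that \emph{both} arguments now carry positive $s$-powers, so that when the commutator yields an $\{I_2+B(4\cdot)\}$ or $\{I_2+C(4\cdot)\}$ block one can further write it as $[\E(D_i)(us^{m'}),\E(B_i)(s^{m'})]$ (four $\E$-factors, via Lemma~\ref{commutator}) with the exponents still tending to infinity. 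Your invocation of Lemma~\ref{sl2reln} for these $I_2+Y'$ blocks is off target: that lemma describes conjugation of $\E(X_i)$ by a $2\times 2$ matrix $\delta$, and does not produce $I_2+B$ or $I_2+C$ as a bounded $\E$-word with controlled $s$-powers. The correct mechanism is the commutator realization just described, and it is exactly how Corollary~\ref{smaller} is proved.

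Second, and more seriously, your plan for the four ``resonant'' pairs $(A,D),(B,C),(C,B),(D,A)$ with $i=j$ does not go through as stated. The explicit block matrices at the end of Lemma~\ref{commutator} are recorded only for $i=2$, and in any case there is no evident way to factor a matrix of the shape
\[
\begin{pmatrix} I_2 + A(\cdot)+D(\cdot) & A(\cdot)+D(\cdot) \\ A(\cdot)+D(\cdot) & I_2 + A(\cdot)+D(\cdot)\end{pmatrix}
\]
as a bounded product of $\E(Z_r)$'s using only the splitting property and Lemma~\ref{abcd}; those tools apply to matrices of the form $\E(X)$ with $\det X=0$, not to sums $I_2+A(\cdot)+D(\cdot)$. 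The paper avoids this entirely by using the lemma immediately preceding the proposition (the one expressing $[\E(A_i)(x),\E(D_i)(2yz)]$ and $[\E(B_i)(x),\E(C_i)(2yz)]$ via the group identity $[x,\,y[z,w]]=[x,y]\,y\,[[x,z]z,[x,w]w]\,[w,z]\,y^{-1}$ combined with Lemma~\ref{8}). One then substitutes $x=a/s^k$, $y=s^m$, $z=4us^{2m}$, so that every argument appearing in the resulting word (products like $xz,\,xy,\,y^2z,\,xy^2z,\dots$) carries a positive $s$-power tending to infinity with $m$; the residual $\{I_{2i-2}\perp(I_2+B(z))\perp\cdots\}$ is again realized as $[\E(D_i)(us^m),\E(B_i)(s^m)]$. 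Counting the factors in this expansion is what gives the bound $\lambda\le 45$. You should replace your block-matrix decomposition step by this argument.
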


\begin{proof}
In order to cover all the possibilities, we discuss the proof by considering
three cases as follows:\\
{\bf{Case $1$.}} When $ X \in \{A,\:B,\:C,\:D \} $, then by Lemma \ref{commutator} one has
\[
\E(X_{i})(a/s^k)\E(X_{i})(s^mx)\E(X_{i})(a/s^k)^{-1} = \E(X_i)(s^mx).
\]
For instance,  $\E(A_i)(a/s^k)\E(A_i)(s^mx)\E(A_i)(a/s^k)^{-1} = \E(A_i)(s^mx).$\\
{\bf{Case $2$.}} When $(X,Y)\in\{
  (A,B),\:(A,C),\:(B,A),\:(B,D),\:(C,A),\:(C,D),\:(D,B),\\ \:(D,C)\}$. Then by Lemma \ref{commutator} one has
\[
\E(X_{i})(a/s^k)E(Y_{j})(s^mx)\E(X_{i})(a/s^k)^{-1} =
\prod_{t=1}^{\lambda}\E(Z_{r_{t}})(s^{m_{t}}x_{t}), \text{  where  $\lambda \leq 5.$ } 
\]
For instance,
\[
\begin{split}
\E(A_i)(a/s^k)\E(B_j)(s^mx)\E(A_i)(a/s^k)^{-1}
           & =  [\E(A_i)(a/s^k),\E(B_j)(s^mx)]\E(B_j)(s^mx) \\
           & =  [\E(A_i)(as^p),\E(B_j)(s^qx)]\E(B_j)(s^mx) \\
           & =  \prod_{t=1}^{\lambda}\E(Z_{r_{t}})(s^{m_t}x_{t}) \text{ for $ m_t > 0 $,}\\
\end{split}
\]
 where $\lambda =1$, if $i\neq j$ and  $\lambda =5, $ if $i= j$
(The penultimate equation is via Lemma \ref{commutator}, and
holds  for any positive integers $p$,\: $q$, with $p+q=m-k$). Similarly 
\[
\begin{split}
\E(A_i)(a/s^k)\E(C_j)(s^mx)\E(A_i)(a/s^k)^{-1}
            &=  [ \E(A_i)(a/s^k),\E(C_j)(s^mx)]\E(C_j)(s^mx) \\
            &=  [ \E(A_i)(as^p),\E(C_j)(xs^q)]\E(C_j)(xs^m) \\
            &= \prod_{t=1}^{5}\E(Z_{r_{t}})(s^{m_{t}}x_{t}) \text{ for $ m_t> 0 $.}\\
\end{split}
\]
(The penultimate equation holds by Lemma \ref{commutator} for any positive integers
$p$,\; $q$, with $p+q=m-k$.)\\
{\bf{Case $3$.}} When $(X,Y)\in \{(A,D),\:(B,C),\:(D,A),\:(C,B))\}$.\\
Assume $i\neq j$. Then one has 
 \[
 \E(X_{i})(a/s^k)\E(Y_{j})(s^mx)\E(X_{i})(a/s^k)^{-1} =
              \prod_{t=1}^{5}\E(Z_{r_{t}})(s^{m_{t}}x_{t}).
\]
We work out the case $(X,Y)= (A,D)$ below. The other cases can be
worked out similarly.
 For instance,
\[
\begin{split}
\E(A_i)(a/s^k)\E(D_j)(s^mx)\E(A_i)(a/s^k)^{-1}
           & =  [ \E(A_i)(a/s^k),\E(D_j)(s^mx)]\E(D_j)(s^mx) \\
           & =  [ \E(A_i)(as^p) ,\E(D_j)(s^qx)]\E(D_j)(s^mx) \\
           & =  \prod_{t=1}^{5}\E(Z_{r_{t}})(s^{m_{t}}x_{t}) \text{ for $ m_t > 0 $.}\\
\end{split}
\]
Assume $i = j$. Then one has
\[
\E(X_{i})(a/s^k)\E(Y_{j})(s^mx)\E(X_{i})(a/s^k)^{-1} =
     \prod_{t=1}^{\lambda}\E(Z_{r_{t}})(s^{m_{t}}x_{t}), \text{  where $\lambda \leq 45$.}
\]
We work out the case $(X,Y)= (A,D)$ below. The other cases can be
worked out similarly. For instance,
\[
\begin{split}
[\: \E(&A_i)(x),\; \E(D_i)(2yz)\:]\:\E(D_i)(2yz)\\
=&[ \E(A_i)(x),\; \{ \{I_2+C(4y^2z)\} \perp I_{2n-2}\} ]\; \{ \{I_2+C(4y^2z)\} \perp I_{2n-2}\}\\
 &[\:[\E(A_i)(x),\:\E(C_i)(y)] \E(C_i)(y),\;[ \E(A_i)(x),\: \{I_{2i-2} \perp \{I_2+B(z)\} \perp I_{2n-2i} \}]\\
 &\{I_{2i-2} \perp \{I_2+B(z)\} \perp I_{2n-2i} \}][ \{I_{2i-2} \perp \{I_2+B(z)\} \perp I_{2n-2i} \},\;\E(C_i)(y)]\\
 &\{ \{I_2+C(-4y^2z)\} \perp I_{2n-2}\}\:\E(D_i)(2yz).\\
 \end{split}
 \]
We can write it as 
\[
\begin{split}
[ \E(&A_i)(x),\; \E(D_i)(2yz)]\:\E(D_i)(2yz)\\
=&[\: \E(A_i)(-4xz),\;\E(C_i)(xy^2)\:]\:\E(C_i)(-8xy^2z)\: [\E(C_i)(y^2),\;\E(D_i)(z)]\\
&[\:[\:\E(A_i)(x),\;\E(C_i)(y)\:] \E(C_i)(y),\;[\: \E(A_i)(x^2),\;\E(B_i)(z)\:]\:\E(B_i)(2xz)\;]\\
&\{I_{2i-2} \perp \{I_2+B(z)\} \perp I_{2n-2i} \}][ \E(C_i)(y),\; \E(D_i)(z)]\: \E(D_i)(2yz)\\
&[\:\E(C_i)(y^2),\;\E(D_i)(z)\:]\;\E(D_i)(2yz).\\
\end{split}
\]
Now choose $ x= a/s^k , y= s^m $ and $ z= 4us^{2m} $ where $a,u$ and $s$ are elements 
of $R$ with $s$ non nilpotent. Then $xz=4aus^{2m-k},\; xy=as^{m-k},\;
yz= 4us^{3m} $ and  $ xy^2z= 4aus^{4m-k}$. Note that by Lemma \ref{commutator} we may write 
$\{I_{2i-2} \perp \{I_2+B(z)\} \perp I_{2n-2i} \}  = [\E(D_i)(us^m),\;\E(B_i)(s^m)] $. Thus 
for some $x_t$ in $ R $, we conclude that 
 $$
[\: \E(A_i)(a/s^k),\; \E(D_i)(8us^{3m})\:]\:\E(D_i)(8us^{3m})
 = \prod_{t=1}^{\lambda}\E(Z_{r_{t}})(s^{m_{t}}x_t), 
$$ 
where $m_t$  is a positive
integer such that $m_t \rightarrow \infty $ as $ m\rightarrow\infty. $
\end{proof}

We record a well-known useful observation in the form of a lemma:

\begin{lemma}\label{Group identity}
 Let $G$ be a group and $a_i,\;b_i \in G,$  for $i=1,\dots,n.$ Then
\[
\prod_{i=1}^{n}a_ib_i=\prod_{i=1}^{n}r_ib_ir_i^{-1}\prod_{i=1}^{n}a_i,
\]
 where $r_i=\prod_{j=1}^{i}a_j.$\hfill $\square$
\end{lemma}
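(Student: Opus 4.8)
The plan is to prove this purely formal group-theoretic identity by a direct telescoping computation: starting from the product $a_1b_1a_2b_2\cdots a_nb_n$, move every $a_i$ to the right past the $b_j$'s while absorbing the cost into conjugations by the partial products $r_i$.

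First I would record the elementary relations $r_0 = e$, $r_i = r_{i-1}a_i$ for $1 \le i \le n$, and hence $r_{i-1}^{-1}r_i = a_i$ together with $r_n = \prod_{i=1}^{n}a_i$. Then I would begin from the right-hand side and substitute $\prod_{i=1}^{n}a_i = r_n$, so that it reads
\[
r_1 b_1 r_1^{-1}\, r_2 b_2 r_2^{-1}\, r_3 b_3 r_3^{-1}\cdots r_n b_n r_n^{-1}\, r_n .
\]
Regrouping the adjacent pairs $r_i^{-1}r_{i+1}$ for $1\le i\le n-1$ (and the terminal pair $r_n^{-1}r_n$) collapses them to $a_{i+1}$ (respectively to $e$), which leaves $r_1 b_1 a_2 b_2 a_3 b_3\cdots a_n b_n$. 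Since $r_1 = a_1$, this is exactly $\prod_{i=1}^{n}a_i b_i$, the left-hand side.

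Alternatively — the route I would actually write out if the index bookkeeping in the telescoping feels opaque — one can induct on $n$. The case $n=1$ is immediate from $r_1 = a_1$. For the inductive step, peel off the last factor, apply the hypothesis to $\prod_{i=1}^{n-1}a_i b_i$, use $\prod_{i=1}^{n-1}a_i = r_{n-1}$ and $r_{n-1}a_n = r_n$ to rewrite the tail $r_{n-1}a_n b_n$ as $(r_n b_n r_n^{-1})\,r_n = (r_n b_n r_n^{-1})\prod_{i=1}^{n}a_i$, and recombine.

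There is no genuine obstacle: the identity is a formal consequence of associativity, and the only points to watch are the index shift $r_i^{-1}r_{i+1} = a_{i+1}$ and the boundary conventions $r_0 = e$ and $\prod_{i=1}^{n}a_i = r_n$. The lemma is isolated in this shape because it is precisely the combinatorial device that converts a product of conjugates of the form $\prod \E(X_i)(a/s^k)\E(Y_j)(s^m x)\E(X_i)(a/s^k)^{-1}$ into a single ordered product, to which Proposition \ref{product decomposition} can then be applied factor by factor.
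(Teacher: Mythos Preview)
Your telescoping argument is correct, and the induction alternative works just as well. The paper itself gives no proof of this lemma at all: it is introduced as ``a well-known useful observation'' and simply marked with a $\square$, so there is nothing to compare against --- you have supplied what the authors deliberately omitted.

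One tangential remark: your closing sentence slightly misstates the r\^ole of the lemma in the paper. It is not used to unpack a product of conjugates $\prod \E(X_i)(a/s^k)\E(Y_j)(s^m x)\E(X_i)(a/s^k)^{-1}$; rather, in the proof of the Dilation Principle (Proposition~\ref{dilation}) it is applied with $a_k = \E(Y_{i_k})(b_k(0))$ and $b_k = \E(Y_{i_k})(Xb'_k(X))$ to separate the constant-term factors from the $X$-dependent ones, pushing all the constant parts to the right (where they collapse to $I_{2n}$ since $\alpha(0)=I_{2n}$) and leaving each $X$-dependent factor conjugated by a partial product $\gamma_k$. Proposition~\ref{product decomposition} is then applied to those conjugates. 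This does not affect the correctness of your proof, only the surrounding commentary.
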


\begin{prop} \label{dilation} {\rm \textbf{(Dilation Principle)}}\\[2mm]
Let $R$ be a commutative ring. Let $s$ be a
non-nilpotent element of $R$. Let $\alpha(X) \in {\rm
Sp}_{2n}(R[X])$ with $ \alpha(0) = I_{2n} $. Let $Y,Z \in \{A,B,C,D\}$. 
If $\alpha_s(X) (=\alpha(X)_s) \in$ $\H(R_s[X])$, then for $m >> 0$, for all $b
\in (s)^m R$, one has $\alpha(bX) \in \H(R[X])$.
\end{prop}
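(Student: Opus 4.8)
The plan is to follow the classical Quillen–Suslin dilation argument, but carried out entirely in terms of the special generators $\E(A)$, $\E(B)$, $\E(C)$, $\E(D)$ using the product-decomposition estimate of Proposition \ref{product decomposition}. First I would write $\beta(X,T) = \alpha_s(X+T)\,\alpha_s(X)^{-1} \in \H(R_s[X,T])$; since $\beta(X,0) = I_{2n}$, every exponent appearing when $\beta$ is expanded as a product of generators $\E(Z_{r})(g)$ (with $g \in R_s[X,T]$) is divisible by $T$, so we may write $\beta(X,T)$ as a finite product $\prod_{u} \E(Z_{r_u})(T\,h_u(X,T))$ with $h_u \in R_s[X,T]$. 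By clearing denominators there is a fixed power $s^{N}$ such that each $h_u$ has the form $h_u = g_u/s^{N}$ with $g_u \in R[X,T]$.

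Next I would apply Proposition \ref{product decomposition} repeatedly to ``push'' the bounded number of denominators $s^{-k}$ past the factors $\E(Z_{r_u})(T h_u)$: conjugating a generator with bounded negative $s$-power by the other generators produces, by the cases of Proposition \ref{product decomposition}, a product of at most $45$ generators whose $s$-exponents tend to infinity with the $s$-exponent of the conjugated argument. Combined with Lemma \ref{Group identity}, which lets me rewrite $\prod a_u b_u$ as $\big(\prod r_u b_u r_u^{-1}\big)\big(\prod a_u\big)$, this shows: there is an integer $M$ so that for every $c \in (s)^{M}R$, the element $\beta(X, cT') = \alpha_s(X + cT')\alpha_s(X)^{-1}$, a priori in $\H(R_s[X,T'])$, is actually the image of a well-defined element $\widetilde\beta(X,T') \in \H(R[X,T'])$ under the localization map, because all the denominators have been absorbed into powers of $s$ sitting in front of the generator arguments, which are now in $(s)R[X,T']$.

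Now I would localize the other way. Set $b \in (s)^{M}R$ and compare $\alpha(bX)$, viewed as an element of $\H(R[X])$ we are trying to exhibit, with $\widetilde\beta(0, bX) \in \H(R[X])$: over $R_s$ we have $\widetilde\beta(0,bX)_s = \alpha_s(bX)\alpha_s(0)^{-1} = \alpha_s(bX)$ since $\alpha(0) = I_{2n}$. Also $\alpha(bX)$ maps to $\alpha_s(bX)$ in $\mathrm{Sp}_{2n}(R_s[X])$. Thus $\alpha(bX)$ and $\widetilde\beta(0,bX)$ become equal in $\mathrm{Sp}_{2n}(R_s[X])$; since $\mathrm{Sp}_{2n}(R[X]) \hookrightarrow \mathrm{Sp}_{2n}(R_s[X])$ (localization at a non-nilpotent element is injective on entries of matrices over the reduced ring — and in any case the two matrices have entries in $R[X]$, on which $R[X] \to R_s[X]$ may fail to be injective, so instead I argue via a further power of $s$ annihilating the difference of entries: there is $L$ with $s^{L}$ killing all entry-differences, and replacing $b$ by $s^{L}b$ forces genuine equality of the two matrices over $R[X]$). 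Hence $\alpha(bX) = \widetilde\beta(0,bX) \in \H(R[X])$ for all $b \in (s)^{m}R$ with $m \gg 0$, which is the assertion.

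The main obstacle is the bookkeeping in the middle step: one must be sure that when the finitely many negative powers $s^{-k}$ (bounded, since $\alpha_s(X)$ is a fixed matrix with a fixed expression as a product of generators) are commuted past the many factors carrying arguments in $(s)^{m}R_s[X,T]$, the resulting $s$-exponents $m_t$ genuinely go to infinity \emph{uniformly} over all the factors as $m \to \infty$, so that a single $m$ works. This is exactly what the clause ``$m_t \to \infty$ as $m \to \infty$'' in Proposition \ref{product decomposition} guarantees for a single application; the point is that only a bounded number (depending on the fixed length of the expression for $\alpha_s(X)$, not on $m$) of applications is needed, and $\lambda \le 45$ keeps the total factor count bounded, so the minimum of the finitely many $m_t$'s still tends to infinity. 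The rest is the standard Suslin-type patching together with Lemma \ref{Group identity}, Corollary \ref{smaller}, and Corollary \ref{sl2normH} to stay inside $H$ throughout.
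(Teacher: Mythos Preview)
Your overall strategy---use Proposition~\ref{product decomposition} repeatedly to absorb the bounded denominators from $R_s$ into high powers of $s$ appearing in the generator arguments, with Lemma~\ref{Group identity} doing the bookkeeping---is the same as the paper's. But there are two issues.

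\textbf{A genuine gap.} You write: ``since $\beta(X,0) = I_{2n}$, every exponent appearing when $\beta$ is expanded as a product of generators $\E(Z_{r})(g)$ is divisible by $T$.'' This is false. If $\beta(X,T) = \prod_u \E(Z_{r_u})(g_u(X,T))$ is any expression in $\H(R_s[X,T])$, the fact that the \emph{product} is the identity at $T=0$ says nothing about the individual $g_u(X,0)$. This is exactly where Lemma~\ref{Group identity} must be invoked, not later: split $g_u(X,T) = g_u(X,0) + T\,g'_u(X,T)$, use the additivity $\E(Z)(a+b) = \E(Z)(a)\E(Z)(b)$, and apply Lemma~\ref{Group identity} with $a_u = \E(Z_{r_u})(g_u(X,0))$ and $b_u = \E(Z_{r_u})(T g'_u)$. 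Since $\prod_u a_u = \beta(X,0) = I_{2n}$, this yields
\[
\beta(X,T) = \prod_u \gamma_u\,\E(Z_{r_u})\bigl(T g'_u(X,T)\bigr)\,\gamma_u^{-1},
\]
with $\gamma_u$ a product of generators with arguments in $R_s[X]$ (bounded denominators, independent of $T$). Only now are the inner arguments divisible by $T$, and only now does substituting $T \mapsto s^m T'$ place a high power of $s$ where Proposition~\ref{product decomposition} can act on it. Without this step your ``push the denominators past'' maneuver has nothing to push against.

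\textbf{An unnecessary detour.} The paper does not introduce the two-variable $\beta(X,T) = \alpha_s(X+T)\alpha_s(X)^{-1}$ here at all; that device belongs to the proof of the Local--Global Principle (Theorem~\ref{LG}), not the Dilation Principle. Since you already have $\alpha(0) = I_{2n}$ by hypothesis, the paper works directly with $\alpha_s(X)$: write $\alpha_s(X) = \prod_k \E(Y_{i_k})(b_k(X))$, split $b_k(X) = b_k(0) + X b'_k(X)$, and apply Lemma~\ref{Group identity} exactly as above (using $\alpha_s(0) = I_{2n}$) to obtain $\alpha_s(X) = \prod_k \gamma_k\,\E(Y_{i_k})(X b'_k(X))\,\gamma_k^{-1}$. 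Then substitute $X \mapsto s^m X$ and prove by induction on the length of $\gamma_k$ (using Proposition~\ref{product decomposition} at each step) that each conjugate is a product $\prod_t \E(Z_{r_t})(s^{m_t} x_t)$ with $m_t \to \infty$. For $m$ large all arguments lie in $R[X]$, and that product \emph{is} $\alpha(bX)$ in $\H(R[X])$. This also sidesteps your awkward patch concerning injectivity of $R[X] \to R_s[X]$: the paper simply exhibits $\alpha(bX)$ as an explicit product of generators over $R[X]$, rather than comparing two matrices through a possibly non-injective localization.
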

\begin{proof} Let $ \alpha_s(X)  = \prod_{k=1}^{r}E(Y_{i_k})(b_k(X)) \in H(R_s[X]) $ 
where $ b_k(X) \in R_s[X] $ for all $k$, also $ i_{k} \in \{2,3,.., n\} $ for 
every $ k \in \mathbb{N} $. Let $ b_k(X) = b_k(0) + Xb\acute{}_k(X) $. 
Since $ \E(Y)(a+b)=\E(Y)(a)\E(Y)(b),$ we can write
\[
\alpha_s(X) = \prod_{k=1}^{r}
\E(Y_{i_k})(b_k(0))\E(Y_{i_k})(Xb\acute{}_k(X)).
\]
By Lemma \ref{Group identity}. one has  
\[
 \alpha_s(X) = \prod_{k=1}^{r} \gamma_k
\E(Y_{i_k})(Xb\acute{}_{k}(X)) \gamma_{k}^{-1} \prod_{k=1}^{r} \E(Y_{i_k})(b_{k}(0)), 
\]
where $ \gamma_{k} = \prod_{j=1}^{k} \E(Y_{i_j})(b_{k}(0)$, here $ i_{j} \in
\{2,3,.., n\} $ for every $ j \in \mathbb{N} $. As $ \gamma_r = I_{2n} $. Therefore one has
\[
\alpha_s(X)= \prod_{k=1}^{r} \gamma_k
\E(Y_{i_k})(Xb\acute{}_{k}(X)) \gamma_k^{-1}.
 \]
Hence we can write
\[
 \alpha_s(s^mX)= \prod_{k=1}^{r} \gamma_k
\E(Y_{i_k})(s^mXb\acute{}_{k}(s^mX)) \gamma_k^{-1}.
\]
Our next claim is that if $ \beta = \prod_{j=1}^{k} \E(Y_{i_j})(b_j), \; b_j \in R_s $, then we can
show that one has a product decomposition
\[
\beta \E(Z)(s^mx)\beta^{-1} =
 \prod_{t=1}^{\lambda_{k}}\E(Z_{r_{t}})(s^{m_t}x_t),\eqno{(7)}
\]
with $ m_t\rightarrow\infty $ as $m\rightarrow \infty $, for some $x_t \in R$. 
We do this by induction on $k$. We may write $\beta = \beta_{1}\beta_{2}\cdots \beta_{k}$, 
where $\beta_j = \E(Y_{i_j})(b_j)$. For $ k=1. $ By Proposition \ref{product decomposition} 
we have a product decomposition
\[
 \beta_1\E(Z)(s^mx)\beta_1^{-1} =
\prod_{t=1}^{\lambda_1}\E(Z_{r_{t}})(s^{m_t}x_t),
\]
with $ m_t\rightarrow\infty $ as $ m \rightarrow\infty $. Assume that the result is true for $k-1$, that is,
\[
 \beta_{1}\beta_{2}\cdots \beta_{k-1}\E(Z)(s^mx)(\beta_{1}\beta_{2}\cdots \beta_{k-1})^{-1}=
\prod_{t=1}^{\lambda_{k-1}}\E(Z_{r_{t}})(s^{m_t}x_t),
\]
with $ m_t\rightarrow\infty $ as $m\rightarrow \infty $, for some $x_t \in R$. 

By Proposition \ref{product decomposition}, we also have 
\[
 \beta_{k}\E(Z)(s^mx)\beta_{k}^{-1}=\prod_{t=1}^{\lambda}\E(Z_{r_{t}})(s^{m_t}x_t)
=\theta_1\theta_2\cdots \theta_{\lambda} \text{ (say).}
\]
For conveinence, call $ \beta_{}^{\prime} =  \beta_{1}\beta_{2}\cdots \beta_{k-1}$. Now it 
is enough to show that 
$ \beta_{}^{\prime} \theta_1\theta_2\cdots \theta_{\lambda}  {\beta_{}^{\prime}}^{-1} $ is of the 
form $(7)$. Since we may write
\[
 \beta_{}^{\prime} \theta_1\theta_2\cdots \theta_{\lambda}  {\beta_{}^{\prime}}^{-1}=
 \beta_{}^{\prime} \theta_1 {\beta_{}^{\prime}}^{-1} \beta_{}^{\prime} \theta_2  {\beta_{}^{\prime}}^{-1}  
 \cdots   \beta_{}^{\prime} \theta_{\lambda}  {\beta_{}^{\prime}}^{-1}.
\]
Thus the claim follows by applying induction hypothesis for $k-1$. 
Therefore we can write
\[
\alpha_s(s^mX) = \prod_{k=1}^{r}\prod_{t=1}^{\lambda_{k}}\E(Z_{r_{t}})(s^{m_t}x_t).
\]
For m large enough, the term $s^{m_t}x_t$ is contained in $R[X]$, as required. Hence
\[
\alpha(bX)=\prod_{k=1}^{r}\prod_{t=1}^{\lambda_{k}}\E(Z_{r_{t}})(s^{m_t}x_t) \in \H(R[X]). 
\]
\end{proof}

\begin{theorem} \label{LG} {\rm \textbf{(Local Global Principle)}} \\[2mm]
Let $\alpha(X) \in {\rm Sp}_{2n}(R[X]),\; $ with $\alpha(0)= I_{2n} $. If $
\alpha(X)_{\mathfrak{m}} \in \H(R_{\mathfrak{m}}[X])$, for all maximal
ideals ${\mathfrak{m}}$ of $R$, then $\alpha(X) \in \H(R[X])$.
\end{theorem}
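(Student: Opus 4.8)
The plan is to run the Quillen--Suslin patching argument, with the Dilation Principle (Proposition~\ref{dilation}) as the engine. \emph{First}, reduce to a localised statement. Fix a maximal ideal $\mathfrak m$. Since $\alpha(X)_{\mathfrak m}\in\H(R_{\mathfrak m}[X])$ is a \emph{finite} product of generators of type $\E(A),\E(B),\E(C),\E(D)$, only finitely many elements of $R_{\mathfrak m}$ occur among the entries of these factors, so one $s=s(\mathfrak m)\notin\mathfrak m$ clears all denominators and $\alpha(X)_{s}\in\H(R_{s}[X])$ with $\alpha(0)=I_{2n}$. \emph{Next}, feed this into Proposition~\ref{dilation}: there is an integer $N=N(\mathfrak m)$ with $\alpha(bX)\in\H(R[X])$ for all $b\in(s^{N})R$, and in particular $\alpha(s(\mathfrak m)^{N(\mathfrak m)}X)\in\H(R[X])$. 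The elements $s(\mathfrak m)^{N(\mathfrak m)}$ lie in no maximal ideal, hence generate the unit ideal; by quasi-compactness of ${\rm Spec}\,R$ finitely many already do, say $s_{1}^{N_{1}},\dots,s_{k}^{N_{k}}$ with $\sum_{i=1}^{k}a_{i}s_{i}^{N_{i}}=1$ for suitable $a_{i}\in R$, and $\alpha(X)_{s_{i}}\in\H(R_{s_{i}}[X])$ for each $i$.

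\emph{Now patch, using a second variable.} Set $e_{0}=0$ and $e_{j}=\sum_{i\le j}a_{i}s_{i}^{N_{i}}$, so $e_{k}=1$. I claim $\alpha(e_{j}X)\in\H(R[X])$ for every $j$, which for $j=k$ is the theorem; the case $j=0$ is $\alpha(0)=I_{2n}$. Assuming $\alpha(e_{j-1}X)\in\H(R[X])$, consider
\[
\theta_{j}(X,Y)\ :=\ \alpha(e_{j-1}X+Y)\,\alpha(e_{j-1}X)^{-1}\ \in\ {\rm Sp}_{2n}(R[X,Y]),\qquad \theta_{j}(X,0)=I_{2n}.
\]
The substitutions $X\mapsto e_{j-1}X+Y$ and $X\mapsto e_{j-1}X$ are ring homomorphisms $R_{s_{j}}[X]\to R_{s_{j}}[X,Y]$ carrying each basic generator to one of the same type $A,B,C$ or $D$, so from $\alpha(X)_{s_{j}}\in\H(R_{s_{j}}[X])$ we get $\theta_{j}(X,Y)_{s_{j}}\in\H(R_{s_{j}}[X,Y])$. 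Apply Proposition~\ref{dilation} to $\theta_{j}$ over the base ring $R[X]$, in the variable $Y$, at the non-nilpotent element $s_{j}$: there is $N_{j}'$ with $\theta_{j}(X,cY)\in\H(R[X][Y])$ for all $c\in(s_{j}^{N_{j}'})R[X]$. Taking the exponents large enough that $N_{j}\ge N_{j}'$, the element $c=a_{j}s_{j}^{N_{j}}$ qualifies, so $\theta_{j}(X,a_{j}s_{j}^{N_{j}}Y)\in\H(R[X,Y])$; specialising $Y=X$ gives $\theta_{j}(X,a_{j}s_{j}^{N_{j}}X)=\alpha(e_{j}X)\,\alpha(e_{j-1}X)^{-1}\in\H(R[X])$, whence $\alpha(e_{j}X)\in\H(R[X])$ and the induction closes.

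\emph{The main obstacle} is the exponent bookkeeping: the threshold $N_{j}'$ produced by Proposition~\ref{dilation} depends on $\theta_{j}$, hence on $e_{j-1}$ and thus on exponents already fixed, whereas the relation $\sum a_{i}s_{i}^{N_{i}}=1$ refers to the \emph{final} exponents. The way out is to observe that, tracing the proof of Proposition~\ref{dilation} (and of Proposition~\ref{product decomposition} behind it), that threshold depends only on the length of a fixed $\H$-factorisation of $\alpha_{s_{j}}$ over $R_{s_{j}}[X]$ and on the largest power of $s_{j}$ appearing in the entries of its factors, and both quantities are \emph{unchanged} by the substitutions $X\mapsto e_{j-1}X+Y$ and $X\mapsto e_{j-1}X$ that build $\theta_{j}$; hence one bound $N^{*}$, independent of $j$, works at every step. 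Fixing all $N_{i}=N^{*}$ (or larger), the $s_{i}^{N_{i}}$ still generate the unit ideal (raise the original comaximality relation among the $s(\mathfrak m_i)^{N(\mathfrak m_i)}$ to a sufficiently high power), so the $a_{i}$ may be chosen once and for all and the induction goes through. Everything else is routine: one checks that linear coordinate changes carry $\{A,B,C,D\}$ into itself (immediate from their shape), and recalls from Theorem~\ref{Main Theorem} that $\H={\rm ESp}_{2n}$ for $n\ge2$, so the statement has its intended content.
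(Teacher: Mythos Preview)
Your argument is correct and follows the same Quillen--Suslin patching scheme as the paper, but your organization creates a difficulty that the paper sidesteps entirely. The paper defines a single auxiliary object $\beta(X,Y)=\alpha(X+Y)\alpha(Y)^{-1}$ with $Y$ a \emph{free} variable, observes $\beta(0,Y)=I_{2n}$ and $\beta_{s}(X,Y)\in\H(R_s[X,Y])$, and applies the Dilation Principle in the variable $X$ over the base $R[Y]$. This yields, for each chosen $s=s_i$, an exponent $N_i$ such that $\alpha(bX+Y)\alpha(Y)^{-1}\in\H(R[X,Y])$ for all $b\in (s_i^{N_i})$, with $N_i$ depending only on the fixed factorisation of $\alpha_{s_i}$. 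One then picks the comaximality relation $\sum c_ib_{\mathfrak m_i}=1$, sets $T_i=\sum_{l>i}c_lb_{\mathfrak m_l}X$, and the product $\prod_i\beta(c_ib_{\mathfrak m_i}X,T_i)$ telescopes to $\alpha(X)$. No step ever looks back at the partial sums.

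By contrast, you apply Dilation inside the induction to the step-dependent $\theta_j$, so the resulting threshold $N_j'$ \emph{a priori} depends on $e_{j-1}$, forcing the extra uniformity argument in your final paragraph. That argument is essentially right---the $\H$-factorisation of $(\theta_j)_{s_j}$ is obtained from the fixed one of $\alpha_{s_j}$ by two substitutions and an inversion, so its length is at most $2r$ (not ``unchanged'' as you write, but still uniformly bounded) and the denominators are the same---so your proof closes. But the detour is unnecessary: keeping $Y$ generic during the dilation step, as the paper does, makes the exponent depend only on $\alpha_{s_j}$ from the outset and eliminates the bookkeeping problem you identified.
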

\begin{proof}
Let ${\mathfrak{m}}$ be a maximal ideal of $R$. Choose an element
$ a_{\mathfrak{m}}$ from $ R\setminus {\mathfrak{m}} $ such that 
$\alpha(X)_{a_{\mathfrak{m}}} \in H(R_{a_{\mathfrak{m}}}[X])$. Let us define 
$\beta(X,Y)=\alpha(X+Y)_{a_{\mathfrak{m}}}\alpha(Y)_{a_{\mathfrak{m}}}^{-1}.$ 
Clearly $ \beta(X,Y) \in \H(R_{a_{\mathfrak{m}}}[X,Y])$, and $ \beta(0,Y) = I_{2n}$. 
Therefore by Proposition \ref{dilation}, we have $ \beta(b_{\mathfrak{m}}X,Y) \in \H(R[X,Y])$, 
where $b_{\mathfrak{m}} \in (a_{\mathfrak{m}}^N)$, for some $N>>0$.

The ideal generated by the $b_{\mathfrak{m}}$ is the whole  ring $R$.
 Therefore we have $c_1b_{{\mathfrak{m}}_1} + c_2b_{{\mathfrak{m}}_2}+
\ldots + c_kb_{{\mathfrak{m}}_k} = 1$, where $c_i \in R$, for
$ 1\leq i\leq k$. Note that $\beta(c_ib_{{{\mathfrak{m}}_i}}X,Y) \in \H(R[X,Y])$, for
$ 1\leq i\leq k$. We can write
\[
 \alpha(X)=\left( \prod_{i=1}^{k-1}\beta(c_ib_{{\mathfrak{m}}_i}X,T_i) \right) \beta(c_kb_{{\mathfrak{m}}_k}X,0),
\]
where $T_i= c_{i+1}b_{{\mathfrak{m}}_{i+1}}X + \ldots +
c_{k}b_{{\mathfrak{m}}_{k}}X$. Hence $\alpha(X) \in  \H(R[X])$.
 \end{proof}

\begin{cor} The subgroup $\H(R)$ (viz. {\rm ESp}$_{2n}(R)$ by 
Theorem \ref{Main Theorem}) is a normal subgroup of {\rm Sp}$_{2n}(R)$.
\end{cor}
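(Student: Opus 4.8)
The plan is to obtain normality from the Local Global Principle (Theorem \ref{LG}) via Suslin's conjugation trick. Fix $\sigma \in {\rm Sp}_{2n}(R)$. Since $\H(R)$ is a subgroup it suffices to show $\sigma\,\H(R)\,\sigma^{-1} \subseteq \H(R)$, and since, by the definition of $H$, $\H(R)$ is generated by the matrices of type $\E(A)$, $\E(B)$, $\E(C)$, $\E(D)$ and conjugation is multiplicative, it is enough to treat a single generator $\varepsilon = \E(Y_i)(x)$, with $Y \in \{A,B,C,D\}$, $2 \leq i \leq n$, $x \in R$. Introduce a polynomial variable by setting $\gamma(X) := \E(Y_i)(Xx) \in \H(R[X])$, so that $\gamma(0) = I_{2n}$ and $\gamma(1) = \varepsilon$, and put $\alpha(X) := \sigma\,\gamma(X)\,\sigma^{-1} \in {\rm Sp}_{2n}(R[X])$, which satisfies $\alpha(0) = I_{2n}$; thus $\alpha(X)$ is a legitimate candidate for Theorem \ref{LG}.

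The next step is to verify the hypothesis of Theorem \ref{LG}, namely that $\alpha(X)_{\mathfrak m} \in \H(R_{\mathfrak m}[X])$ for every maximal ideal $\mathfrak m$ of $R$. Since $2$ is a unit in $R$ it is a unit in $R_{\mathfrak m}$, so Theorem \ref{Main Theorem} is available over $R_{\mathfrak m}$ and gives ${\rm ESp}_{2n}(R_{\mathfrak m}) = \H(R_{\mathfrak m})$; combining this with the classical fact that over a local ring the symplectic group is already elementary, ${\rm Sp}_{2n}(R_{\mathfrak m}) = {\rm ESp}_{2n}(R_{\mathfrak m})$, one gets $\sigma_{\mathfrak m} \in \H(R_{\mathfrak m}) \subseteq \H(R_{\mathfrak m}[X])$. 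As $\gamma(X)_{\mathfrak m} \in \H(R_{\mathfrak m}[X])$ by construction and $\H(R_{\mathfrak m}[X])$ is a group, $\alpha(X)_{\mathfrak m} = \sigma_{\mathfrak m}\,\gamma(X)_{\mathfrak m}\,\sigma_{\mathfrak m}^{-1}$ lies in $\H(R_{\mathfrak m}[X])$. Hence Theorem \ref{LG} yields $\alpha(X) \in \H(R[X])$. Applying the evaluation homomorphism $R[X] \to R$, $X \mapsto 1$, which carries the four generating families of $\H(R[X])$ onto those of $\H(R)$ and therefore sends $\H(R[X])$ into $\H(R)$, gives $\sigma\varepsilon\sigma^{-1} = \alpha(1) \in \H(R)$. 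Since $\H(R) = {\rm ESp}_{2n}(R)$ by Theorem \ref{Main Theorem}, this is exactly the assertion that ${\rm ESp}_{2n}(R)$ is a normal subgroup of ${\rm Sp}_{2n}(R)$.

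The only genuinely substantial ingredient, and the step I expect to be the main obstacle, is the local input ${\rm Sp}_{2n}(R_{\mathfrak m}) = {\rm ESp}_{2n}(R_{\mathfrak m})$: the remainder is formal bookkeeping, but this fact needs either a citation to the literature on symplectic groups over local (or semilocal) rings or a short self-contained argument — peeling off one hyperbolic plane at a time and using that ${\rm SL}_2$ of a local ring coincides with ${\rm E}_2$ of that ring — to write an arbitrary symplectic matrix over $R_{\mathfrak m}$ as a product of symplectic elementary generators. One should also record explicitly that localization and the substitution $X \mapsto 1$ respect the subgroup $\H$, i.e.\ carry its distinguished generators to distinguished generators, so that the containments $\sigma_{\mathfrak m} \in \H(R_{\mathfrak m})$ and $\alpha(1) \in \H(R)$ are justified rather than assumed.
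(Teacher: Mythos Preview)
Your proof is correct and follows essentially the same approach as the paper: introduce a polynomial homotopy with identity at $0$, use the local fact ${\rm Sp}_{2n}(R_{\mathfrak m}) = {\rm ESp}_{2n}(R_{\mathfrak m})$ to verify the hypothesis of the Local Global Principle (Theorem~\ref{LG}), and then specialize. The paper's proof differs only cosmetically in that it takes a general $h\in\H(R)$ rather than a single generator, and at the local step writes $\gamma_{\mathfrak p} = {}^{\delta}\alpha$ via Identity~(E) and invokes Corollary~\ref{sl2normH}, whereas you use Theorem~\ref{Main Theorem} more directly to place $\sigma_{\mathfrak m}$ itself in $\H(R_{\mathfrak m})$.
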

\begin{proof} Let $\gamma \in $ Sp$_{2n}(R)$, $h \in \H(R)$. Choose a
homotopy $h(T) \in \H(R[T])$ of $h$. Consider $\gamma h(T) \gamma^{-1}$. 
Note that for a prime ideal ${\mathfrak{p}}$ of $R$, Sp$_{2n}(R_\mathfrak{p})
=$ ESp$_{2n}(R_\mathfrak{p})$. By Theorem 2.10 (E), $\gamma_{\mathfrak{p}} =
^{\delta(\mathfrak{p})} \alpha$, for some $\alpha \in
\H(R_{\mathfrak{p}})$. 

Thus,
 $\gamma_{\mathfrak{p}} h(T)_{\mathfrak{p}}\gamma_{\mathfrak{p}}^{-1} =
^\delta \alpha h(T)_{\mathfrak{p}}(^\delta \alpha)^{-1}$, for some $\alpha \in
\H(R_{\mathfrak{p}})$, $\delta \in {\E}_2(R_{\mathfrak{p}})$.
By Lemma \ref{sl2normH},
$\gamma_{\mathfrak{p}} h(T)_{\mathfrak{p}}\gamma_{\mathfrak{p}}^{-1}
\in \H(R_{\mathfrak{p}}[T])$, for all primes
${\mathfrak{p}}$ of $R$. By the Local Global Principle proved in Theorem \ref{LG} 
\begin{eqnarray*}
\gamma h(T)\gamma^{-1} &\in& \gamma h(0)\gamma^{-1}\H(R[T]) 
=\H(R[T]), 
\end{eqnarray*}
as $h(0)= I_{2n}$. Hence $\gamma h(T)\gamma^{-1} \in 
\H(R[T])$. Hence $\gamma
h(1)\gamma^{-1} \in \H(R)$, as required.
\end{proof}



\begin{thebibliography}{VAS}
\bibitem{acr} H. Apte, P. Chattopadhyay, R. A. Rao, A Local Global Principle for 
an Extended ideal, J. Ramanujan Math. Soc. {\bf 27} (2012), no. 1, 1-20. 

\bibitem{BKR} R. Basu, R. Khanna, R. A. Rao, On Quillen's local global
  principle, Contemp. Math. {\bf 390} Amer. Math. Soc. Providence RI (2005),
  17-30.

\bibitem{pr} P. Chattopadhyay, R. A. Rao, Elementary symplectic orbits and
improved K$_1$-stability, J. K-Theory {\bf 7} (2011), no. 2, 389-403. 


\bibitem{frs} J. Fasel, R. A. Rao, R. G. Swan, On Stably Free Modules over 
Affine Algebras, arXiv:1107.1051v2[math.AC].

\bibitem{fsr}  J. Fasel, R. A. Rao, R. G. Swan, On some actions of stably 
elementary matrices on alternating matrices, in preparation. 

\bibitem{Rao} F. Ischebeck, R. A. Rao, Ideals and reality, 
 Projective modules and number of generators of ideals, Springer Monographs
 in Mathematics, Springer-Verlag, Berlin, 2005. xiv+336 pp. ISBN: 3-540-23032-7.

\bibitem{Kop} V. I. Kope{\u\i}ko,  Stabilization of symplectic groups over a ring of 
polynomials (Russian)  Mat. Sb. (N.S.)  {\bf 106(148)} (1978), no. 1, 94--107, 144. 

\bibitem{L} T. Y. Lam, Serre's conjecture, Lecture Notes in Mathematics, Vol. {\bf 635}, 
Springer-Verlag, Berlin-New York, (1978).  Revised edition:
  Serre's Problem on Projective Modules, Springer Monographs in Mathematics, Springer-Verlag, Berlin, (2006), 
  xxii+401 pp, ISBN 978-3-540-23317-6; 3-540-23317-2.

\bibitem{Q} D. Quillen, Projective modules over polynomial rings,
  Invent. Math. {\bf 36} (1976), 167-171.
  
\bibitem{vdkr} R. A. Rao, W. van der Kallen, Improved stability for {SK$_1$} 
and {WMS$_d$} of a non-singular affine algebra, $K$-theory (Strasbourg, 1992), 
Ast\'erisque No. {\bf 226}, (1994), 411-420.  

\bibitem{S} A. A. Suslin, The cancellation problem for projective modules, and 
related questions. (Russian)  Proceedings of the International Congress of 
Mathematicians (Helsinki, 1978),  pp. 323--330, Acad. Sci. Fennica, Helsinki, 
(1980). 

\bibitem{18} A. A. Suslin, On the Structure of the Special Linear Group
  over Polynomial Rings, Math. USSR. Izvestija {\bf 11} (1977), 221-238.

\bibitem{Taddei} G. Taddei,  Invariance du sous-groupe symplectique 
\'el\'ementaire dans le groupe
symplectique sur un anneau. (French) C. R. Acad. Sci. Paris S\'er. I Math. {\bf 295} 
(1982), no. 2, 47-50.

\bibitem{SV} L.N. Vaser{\v{s}}te{\u\i}n, A. A. Suslin, Serre's problem on projective modules 
over polynomial rings, and algebraic {$K$}-theory,  Math.  USSR Izvestija {\bf 10} (1976), no. 5, 937--1001.

\end{thebibliography}
\end{document}